\numberwithin{equation}{section}
\newtheorem{theorem}{Theorem}[section]
\newtheorem{rem}[theorem]{Remark}
\newtheorem{defn}[theorem]{Definition}
\newtheorem{lemma}[theorem]{Lemma}
\newtheorem{prop}[theorem]{Proposition}
\newtheorem{cor}[theorem]{Corollary}
\newtheorem{example}[theorem]{Example}
\def\Im{\mathop{\rm Im}\nolimits}
\def\dbar{\bar\partial}
\def\ddbar{\partial\bar\partial}
\def\d{\partial}
\def\cI{{\mathcal I}}
\def\cE{{\mathcal E}}
\def\cM{{\mathcal M}}
\def\cD{{\mathcal D}}
\def\cF{{\mathcal F}}
\let\ep=\varepsilon
\let\vp=\varphi 
\def\bC{{\mathbb C}}
\def\bR{{\mathbb R}}
\def\bP{{\mathbb P}}
\def\a{{\alpha}}
\def\z{{\zeta}}
\def\b{\beta}
\def\k{{\kappa}}
\def\l{{\ell}}
\def\ba{{\mathbf{a} }}
\def\bb{{\mathbf{b} }}
\def\bc{{\mathbf{c} }}
\def\be{{\mathbf{e} }}
\title[Zero mass conjecture]
{On the residual Monge-Amp\`{e}re mass of plurisubharmonic functions with symmetry, II}
\author{Weiyong He}
\address{Department of Mathematics, University of Oregon, Eugene, OR, USA, 97403.}
\email{whe@uoregon.edu}
\author{Long Li}
\address{ Institute of Mathematical Science at ShanghaiTech University, 393 Middle Huaxia Road, Pudong, Shanghai, China, 201210.}
\email{lilong1@shanghaitech.edu.cn}
\author{Xiaowei Xu}
\address{School of Mathematical Sciences, USTC, Hefei, Anhui, China, 230026; CAS, Wu Wen-Tsun Key Laboratory of Mathematics.}
\email{xwxu09@ustc.edu.cn}
\begin{document}
\maketitle 

$$ \emph{In memory of Prof. Demailly} $$

\begin{abstract}
The aim of this article is to study the residual Monge-Amp\`{e}re mass 
of a plurisubharmonic function with an isolated singularity, provided with the circular symmetry. 
With the aid of Sasakian geometry, 
we obtain an estimate on the residual mass of this function 
with respect to its Lelong number and maximal directional Lelong number. 
This result partially answers the zero mass conjecture raised by Guedj and Rashkovskii. 
\end{abstract}

\section{Introduction}
\bigskip

The zero mass conjecture for plurisubharmonic functions, 
raised by Guedj and Rashkovskii, 
is a fundamental but difficult problem in pluripotential theory, see \cite{GZ15}, \cite{Ra16}.
It states that the Lelong number of a plurisubharmonic function (with an isolated singularity)
must be positive, if its complex Monge-Amp\`ere measure has a Dirac mass at the singularity. 
Equivalently, this says that the residual Monge-Amp\`ere mass at the singularity of the plurisubharmonic function 
must be zero, if it has zero Lelong number at this point.

In literature, there have been many important contributions towards this conjecture, 
including works like \cite{Ceg02}, \cite{Ra01}, \cite{Ra06}, \cite{Ra13}, \cite{KR21}, \cite{BFJ07}, \cite{ACH19} and \cite{G10}. 
In particular, Rashkovskii \cite{Ra01} confirms it, 
provide that the plurisubharmonic function has the \emph{toric symmetry}. 
As a generalization of the toric symmetry, 
the so called \emph{circular symmetry}   
has been recently studied, 
and the zero mass conjecture is confirmed for 
all circular symmetric plurisubharmonic functions in $\bC^2$, see \cite{BB},  \cite{Li19} and \cite{Li23}.

In this paper, we study the zero mass conjecture for plurisubharmonic functions with circular symmetry in higher dimensions. 
Consider the family $\cF(B_1)$
consisting of all  circular symmetric (or $S^1$-invariant)
plurisubharmonic functions on the unit ball $B_1$ in $\bC^{n+1}$
that are locally bounded outside the origin, see Definition \ref{def-pfc-001}. 
Moveover, 
we denote $\cF^{\infty}(B_1)$ by the sub-collection of $\cF(B_1)$
that requires $C^2$-continuity of the functions outside the origin, see Definition \ref{def-pfc-002}.
In the case of complex dimension two, the second-named author proved  the following estimate. 

\begin{theorem}[\cite{Li23}]
\label{thm-int-001}
For a function $u\in \cF(B_1)$ in $\bC^2$, we have 
\begin{equation}
\label{int-001}
[\nu_u(0)]^2 \leq \tau_{u}(0) \leq 2 \lambda_u(0) \cdot \nu_{u}(0) + [\nu_u(0)]^2.
\end{equation}
\end{theorem}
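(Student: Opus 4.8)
The plan is to handle the two inequalities by quite different means: the left-hand bound follows from a general comparison principle, while the right-hand bound requires an explicit computation adapted to the circular symmetry.

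For the lower bound $[\nu_u(0)]^2\le\tau_u(0)$ I would invoke Demailly's comparison theorem: for any plurisubharmonic germ with an isolated singularity in $\bC^n$ one has $(dd^cu)^n(\{0\})\ge[\nu_u(0)]^n$, and specializing to $n=2$ gives $\tau_u(0)\ge[\nu_u(0)]^2$ directly. This step uses neither the symmetry nor the dimension in an essential way; alternatively it can be recovered from the integral formula below together with the Cauchy--Schwarz inequality, which is the form I would prefer so as to keep the argument self-contained.

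The upper bound is where I would exploit the diagonal $S^1$-action. Writing $z=e^{s}\xi$ with $s=\log|z|$ and $\xi\in S^3$, the Hopf fibration $S^3\to\bP^1$ lets me regard $u$ as a function $u(s,x)$ with $x\in\bP^1$, and decompose the complex Hessian into a radial part $u_{ss}$, a transverse part $\omega_u:=u_s\,\omega_{FS}+dd^c_xu$ (the transverse K\"{a}hler form, which is nonnegative by plurisubharmonicity), and cross terms. Applying Stokes' formula on the shrinking spheres $\partial B_\rho=\{s=\log\rho\}$ and integrating out the Reeb circle, I would express the residual mass as the boundary limit
$$\tau_u(0)=\lim_{s\to-\infty}\int_{\partial B_{e^{s}}}d^cu\wedge dd^cu,$$
and reduce the integrand to transverse integrals over $\bP^1$: a radial contribution of the form $\int_{\bP^1}u_s^2\,\omega_{FS}$ and coupling terms pairing $u_s$ with the transverse Laplacian $dd^c_xu$. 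For fixed $x$ the restriction of $u$ to the complex line $L_x$ is subharmonic and $S^1$-invariant, hence convex and increasing in $s$, so $u_s(s,x)$ decreases to the directional Lelong number $\ell(x):=\nu_u(0,L_x)$ as $s\to-\infty$. This identifies $\nu_u(0)=\min_x\ell(x)$ and $\lambda_u(0)=\max_x\ell(x)$, whence the radial integral tends to $\int_{\bP^1}\ell^2\,\omega_{FS}$, a quantity I can bound in terms of $\nu_u(0)\lambda_u(0)$ and $[\nu_u(0)]^2$ using $\nu_u(0)\le\ell\le\lambda_u(0)$. It then remains to estimate the coupling terms by integrating by parts on $\bP^1$ and using the positivity $\omega_u\ge0$ together with $\int_{\bP^1}dd^c_xu=0$; these should contribute at most $2\lambda_u(0)\,\nu_u(0)$, closing the estimate.

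The hard part will be controlling the coupling terms in the limit $s\to-\infty$. Since the directional function $\ell$ is in general non-constant, the transverse gradient $\nabla_xu$ need not remain bounded as $s\to-\infty$, and the transverse Monge--Amp\`{e}re $dd^c_xu$ develops singular parts in the limit: its tangent object is the indicator of $u$, which is only logarithmically homogeneous and typically non-smooth. Making rigorous the passage from $u$ to its indicator, and showing that the cross terms do not exceed $2\lambda_u(0)\,\nu_u(0)$, is the crux of the proof. This is precisely the place where the constraint $\omega_u\ge0$ must be used decisively, together with the fact that $\bP^1$ is one-dimensional so that the transverse form has vanishing self-intersection $\omega_u\wedge\omega_u=0$; it is this degeneration that both produces the quadratic shape $2\lambda_u(0)\,\nu_u(0)+[\nu_u(0)]^2$ of the bound and confines the present argument to complex dimension two.
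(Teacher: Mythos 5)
Your strategy for the upper bound coincides with the paper's: the Hopf fibration $S^1\hookrightarrow S^3\to\bC\bP^1$, the splitting of $dd^cu$ into radial, transverse and cross parts, Stokes on the spheres $S_r$, and the resulting identity $\frac{1}{\pi}\int_{S_r}d^cu\wedge dd^cu=\int_{\bC\bP^1}(\dot u_t)^2\,\omega_{FS}+2\int_{\bC\bP^1}\dot u_t\,d_Bd_B^cu$ is exactly the $n=1$ case of the decomposition formula (Theorem \ref{thm-df-001}); the lower bound via the comparison theorem is likewise the route the paper attributes to Cegrell. But the step you single out as the crux --- passing to the tangent object of $u$ and controlling the singular limit of the transverse Monge--Amp\`ere measure as $s\to-\infty$ --- is not where the difficulty lies and is never needed. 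Since $\int_{\bC\bP^1}d_Bd_B^cu=0$ by Stokes, one has, at each \emph{finite} radius,
$2\int\dot u_t\,d_Bd_B^cu=2\int\dot u_t\,\Theta_2-2\int(\dot u_t)^2\omega_{FS}\le 2M_A(u)\int\dot u_t\,\omega_{FS}$,
using only the pointwise positivity $\Theta_2:=\dot u_t\,\omega_{FS}+d_Bd_B^cu\ge0$ and $0\le\dot u_t\le M_A(u)$. The only limits ever taken are of the scalar integrals $\int(\dot u_t)^p\omega_{FS}$ for $p=1,2$, which converge to $\pi[\nu_u(0)]^p$ by dominated convergence once $M_A(u)<+\infty$ is known (Proposition \ref{prop-pfc-001}). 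No tangent cone, indicator, or limit of $dd^c_xu$ enters. (A small caution: $\lambda_u(0)=\lim_AM_A(u)$ dominates but need not equal $\sup_\zeta\nu_{u|_{\l_\z}}(0)$; only the inequality $\dot u_t\le M_A(u)$ is used, so your identification of $\lambda_u(0)$ with a maximum should not be asserted.)

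The genuine gap is regularity. Your entire computation presupposes $u\in\cF^\infty(B_1)$, whereas the theorem is stated for $u\in\cF(B_1)$, which is merely $L^\infty_{loc}$ on $B_1^*$; for such $u$ the restriction of $dd^cu$ to a sphere, the transverse form $d_Bd_B^cu$, and the integration by parts on $S_r$ and on $\bC\bP^1$ all require justification. The paper reduces to the smooth case by convolution, and this reduction is not soft: one needs $\mbox{MA}(u_{\ep})(B_r)\to\mbox{MA}(u)(B_r)$ for a.e.\ $r$ (Lemma \ref{lem-pfc-001}), the convergence $I(u_{\ep_k,t})\to I(u_t)$ along a subsequence for a.e.\ $t$ (Proposition \ref{prop-pfc-002}, resting on the Friedrichs-type commutator estimate of Lemma \ref{lem-pfc-004}), and above all a uniform control of the maximal directional Lelong numbers of the regularization, $M_B(u_{\ep})\le 2M_A(u)+C\ep$ (Lemma \ref{lem-pfc-005}); without the latter, the constant appearing in your bound is attached to $u_{\ep}$ rather than to $u$ and does not obviously converge to $\lambda_u(0)$. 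Your proposal is silent on this passage, which is where most of the technical work for the general statement actually resides.
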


Here $\nu_u(0)$ and $\tau_u(0)$ denote 
the Lelong number and the residual Monge-Amp\`ere mass 
of $u$ at the origin, respectively. 
Moreover, the constant $\lambda_u(0)$ is 
the \emph{maximal directional Lelong number} of $u$ at the origin. 
Let $\l_\z: = \bC\cdot \z$ be the complex line through the origin
in the complex direction $\z\in (\bC^{n+1})^*$.
This terminology stems from taking the supreme  of 
the Lelong number at the origin  
 of the restriction $u|_{\l_\z}$
 among all complex directions.
For more details, see Definition  \ref{def-pfc-003} and \ref{def-pfc-004}.


However, taking supremes does not guarantee the finiteness in general. 
Fortunately, it turns out that 
$\lambda_u(0)$ 
is indeed a non-negative real number for any function $u\in \cF(B_1)$, see Proposition \ref{prop-pfc-001}. 
Then the zero mass conjecture  in $\bC^2$  directly follows from the estimate in equation (\ref{int-001}).

More precisely, this estimate is due to  a  decomposition formula 
of the complex Monge-Amp\`ere measure, see Theorem 4.4, \cite{Li23}.
To obtain this formula, 
we have utilized a particular local coordinate system (called the complex Hopf-coordinate) on $\bC^2$,
and it is naturally induced from the famous Hopf-fiberation:
\\

\begin{tikzcd}
\ \ \ \ \ \ \ \ \ \ \ \ \ \ \ \ \ \ \ \ \ \ \ \ \ \ \ \ \ S^1 \arrow[r, hook] & S^{3} \arrow[r, "p"] & \bC\bP^1. 
\end{tikzcd}
\\

In higher dimesions, the new observation is that there is a natural  
\emph{K\"ahler cone structure} on $(\bC^{n+1})^* = S^{2n+1} \times \bR_+$.
That is to say, 
we can consider the \emph{ standard Sasakian structure} of the unit sphere $S^{2n+1}$
with the base manifold $\bC\bP^n$. 
It is nothing but the Hopf-fiberation 
as a principle circle bundle over the complex projective space: 
\\

\begin{tikzcd}
\ \ \ \ \ \ \ \ \ \ \ \ \ \ \ \ \ \ \ \ \ \ \ \ \ \ \ \ \ S^1 \arrow[r, hook] & S^{2n+1} \arrow[r, "p"] & \bC\bP^n. 
\end{tikzcd}
\\

Sasakian geometry is served as a bridge to connect with
the two K\"ahler structures respectively of the cone and the base manifold. 
It enables us to achieve the decomposition formula (Theorem \ref{thm-df-001}) in any dimension.
Comparing to the domain case,
this formula
can be viewed as the push forward of the complex Monge-Amp\`ere measure of a $u\in \cF^{\infty}(B_1)$
to the base manifold, under the Sasakian structure of $S^{2n+1}$, see Section \ref{sub-007}. 
For more discussion on Sasakian manifolds, the readers are referred to 
\cite{SH62}, \cite{MSY08}, \cite{BG08}, \cite{HS16} and \cite{HL21}.

Finally, 
we come up with the following estimate on the residual Monge-Amp\`ere mass.

\begin{theorem}[Theorem \ref{thm-gr-002}]
\label{thm-int-002}
For a function $u\in \cF(B_1)$, 
there exists a dimensional constant $C_n \geq n+1$ such that we have 
\begin{equation}
\label{int-002}
[\nu_u(0)]^{n+1} \leq \tau_{u}(0) \leq 2 C_n [\lambda_u(0)]^n \cdot \nu_u(0).
\end{equation}
\end{theorem}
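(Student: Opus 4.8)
The plan is to derive both inequalities from the push-forward (decomposition) formula of Theorem \ref{thm-df-001}, which transports the Monge--Amp\`ere mass of a circular symmetric function to the base $\bC\bP^n$. First I would reduce to $u\in\cF^\infty(B_1)$ by a standard regularization: approximate $u$ from above by a decreasing sequence of $S^1$-invariant functions in $\cF^\infty(B_1)$, so that the Lelong number, the maximal directional Lelong number, and the residual mass all pass to the limit (the residual mass is upper semicontinuous along decreasing sequences, and $\nu_u(0),\lambda_u(0)$ are controlled under such approximation). For the lower bound $[\nu_u(0)]^{n+1}\le\tau_u(0)$ I would simply invoke Demailly's comparison inequality between the Lelong number and the residual Monge--Amp\`ere mass, valid for any plurisubharmonic germ with an isolated singularity; no symmetry is needed at this step.

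For the upper bound, writing $t=\log|z|$ and using the Sasakian/Hopf coordinates, the circular symmetry lets me view $u$ as a basic function $\hat u(w,t)$ on $\bC\bP^n\times\bR$, convex and increasing in $t$. The transverse complex Hessian is the semipositive $(1,1)$-form $\theta_t:=\ddbar_w\hat u+(\d_t\hat u)\,\omega_{FS}$ on $\bC\bP^n$, where the extra term records the curvature $d\eta=p^*\omega_{FS}$ of the Hopf fibration. Theorem \ref{thm-df-001} then identifies the residual mass with a limiting flux of the form
\[
\tau_u(0)=c_n\lim_{t\to-\infty}\int_{\bC\bP^n}(\d_t\hat u)\,\theta_t^{\,n},
\]
for a dimensional normalization $c_n$ with $c_n\Vol(\bC\bP^n)=1$; as $t\to-\infty$ one has $\d_t\hat u\to\nu_w$, the directional Lelong number of $u$ along the line $\l_w$, and $\theta_t\to\theta_\infty$.

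The upper bound then rests on two ingredients. The first is a genericity statement: the exceptional set $\{w:\nu_w>\nu_u(0)\}$ where the directional Lelong number strictly exceeds the Lelong number is pluripolar, hence Lebesgue-null in $\bC\bP^n$, so $\nu_w=\nu_u(0)$ almost everywhere and
\[
\int_{\bC\bP^n}\nu_w\,\omega_{FS}^{\,n}=\nu_u(0)\cdot\Vol(\bC\bP^n).
\]
The second is a pointwise bound on the transverse Monge--Amp\`ere density: the trace $\tr_{\omega_{FS}}\theta_t$ is controlled by the radial slope $\d_t\hat u$, which is dominated by the maximal directional Lelong number, so by the arithmetic--geometric mean inequality $\theta_t^{\,n}\le \bigl(C\,\lambda_u(0)\bigr)^{n}\,\omega_{FS}^{\,n}$ on the generic part of $\bC\bP^n$. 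Feeding both into the flux formula and using $\d_t\hat u\to\nu_w$ gives
\[
\tau_u(0)\le C'\,[\lambda_u(0)]^{n}\int_{\bC\bP^n}\nu_w\,\omega_{FS}^{\,n}=C'\,[\lambda_u(0)]^{n}\,\nu_u(0)\cdot\Vol(\bC\bP^n),
\]
which is the asserted estimate; the factor $2$ and the dimensional constant $C_n$ (consistent with $C_n\ge n+1$) are then pure bookkeeping from the AM--GM step and the normalization $c_n\Vol(\bC\bP^n)=1$.

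The hard part will be making the second ingredient rigorous near the \emph{special directions}, where $\nu_w$ jumps up to $\lambda_u(0)$: there the profile $\hat u(\cdot,t)\sim\nu_w\,t$ has transverse Hessian $\sim t\,\ddbar_w\nu_w$, so $\tr_{\omega_{FS}}\theta_t$ blows up as $t\to-\infty$ and the transverse mass concentrates, invalidating a naive pointwise bound on all of $\bC\bP^n$. I expect to handle this by working at finite $t$ with estimates that remain uniform after integration against the density $\nu_w$, bounding the concentrated mass separately and showing that its contribution to the flux is again $O\bigl([\lambda_u(0)]^{n}\,\nu_u(0)\bigr)$. Justifying the interchange of the limit $t\to-\infty$ with the integration over $\bC\bP^n$, and validating the flux identity for the merely bounded (not closed) forms $\theta_t$, is the delicate technical point on which the whole upper bound turns.
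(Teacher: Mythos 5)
Your overall framework (reduction to $\cF^\infty(B_1)$ by smoothing, the push-forward/decomposition formula of Theorem \ref{thm-df-001}, and the Cegrell--Demailly comparison for the lower bound) matches the paper's, but the step on which your upper bound actually turns is not carried out, and as stated it fails. You propose a \emph{pointwise} bound $\theta_t^{\,n}\le\bigl(C\lambda_u(0)\bigr)^n\,\omega_{FS}^n$ obtained by controlling $\tr_{\omega_{FS}}\theta_t$ by the radial slope and applying AM--GM. But the transverse Hessian $d_Bd^c_Bu$ is not pointwise controlled by $\dot u_t$: only its integral against $\omega_{FS}^{n-1}$ is (via Stokes), and the transverse mass can concentrate along special directions --- exactly the failure you concede in your final paragraph. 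The ``delicate technical point'' you defer is therefore the entire content of the upper bound, and no mechanism for handling the concentrated mass is supplied. The paper avoids any pointwise bound: it rewrites the decomposition formula in terms of the positive form $\Theta_2=\dot u_t\,\omega_{FS}+d_Bd^c_Bu$ (Lemma \ref{lem-ps-001}, Corollary \ref{cor-gr-001}) and proves the purely integral estimate
\[
\int_{\bC\bP^n}(\dot u_t)^{n+1-k}\,\omega_{FS}^{n-k}\wedge\Theta_2^{k}\;\le\;B_k\,M_A^n(u)\int_{\bC\bP^n}\dot u_t\,\omega_{FS}^n
\]
by an algebraic induction on $k$ that uses only $0\le\dot u_t\le M_A(u)$, the positivity of $\Theta_2$, and the vanishing of $\int_{\bC\bP^n}\omega_{FS}^{n-k}\wedge(d_Bd^c_Bu)^k$ for $k\ge1$ (Lemma \ref{lem-gr-0015}); concentration of $d_Bd^c_Bu$ is irrelevant there because those terms are discarded by Stokes after the substitution, never estimated pointwise. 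This integral positivity argument is the missing idea in your proposal.

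Two further inaccuracies. The factor $2$ in \eqref{int-002} is not ``bookkeeping from the AM--GM step'': it comes from the regularization, namely the a priori bound $M_B(u_\ep)\le 2M_A(u)+C\ep$ of Lemma \ref{lem-pfc-005}, which is needed precisely because the maximal directional Lelong number does not simply ``pass to the limit'' under smoothing as you assert; the convergence of the masses and of $I(u_{\ep_k,t})$ also only holds for almost all radii and along a subsequence (Lemma \ref{lem-pfc-001}, Proposition \ref{prop-pfc-002}). And the genericity of $\nu_w=\nu_u(0)$ is used only through the scalar limit $I(u_t)\to\pi^n\nu_u(0)$ (Lemma \ref{lem-pfc-003}); it cannot be fed into a product with $\theta_t^{\,n}$ without the very interchange of limits you identify as unresolved.
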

The lefthand side of the inequality in equation \eqref{int-002}
was indicated by Cegrell \cite{Ceg04},
and the zero mass conjecture follows from the righthand side.

\begin{theorem}[Theorem \ref{cor-gr-003}]
\label{cor-int-001}
For a function $u\in \cF(B_1)$, we have 
\begin{equation}
\label{int-003}
\nu_u(0) =0 \Rightarrow \tau_u(0) =0. 
\end{equation}
\end{theorem}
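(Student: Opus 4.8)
The plan is to derive this statement as an immediate consequence of the two-sided estimate in Theorem \ref{thm-int-002}, specifically its right-hand inequality, combined with the finiteness of the maximal directional Lelong number furnished by Proposition \ref{prop-pfc-001}. The whole point is that all of the genuine analytic content has already been packaged into that estimate, so what remains is a short squeezing argument.

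First I would record the trivial lower bound: for any $u \in \cF(B_1)$ the residual Monge-Amp\`ere mass $\tau_u(0)$ is by construction a non-negative quantity, being the Dirac mass at the origin of the non-negative complex Monge-Amp\`ere measure associated to $u$. Hence it suffices to produce a matching upper bound forcing $\tau_u(0) \leq 0$. Next I would invoke Proposition \ref{prop-pfc-001}, which guarantees that $\lambda_u(0)$ is a \emph{finite} non-negative real number for every $u \in \cF(B_1)$. This finiteness is the essential structural input, since it ensures that the factor $[\lambda_u(0)]^n$ appearing in the estimate does not blow up.

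With these two facts in hand, I would apply the right-hand side of the inequality \eqref{int-002} in Theorem \ref{thm-int-002} to obtain
\begin{equation}
0 \leq \tau_u(0) \leq 2 C_n [\lambda_u(0)]^n \cdot \nu_u(0),
\end{equation}
where $C_n \geq n+1$ is the dimensional constant therein. Under the hypothesis $\nu_u(0) = 0$, the right-hand member vanishes identically: the factor $[\lambda_u(0)]^n$ is finite by the previous paragraph and is multiplied by $\nu_u(0) = 0$, so the product is genuinely zero rather than an indeterminate form. The two inequalities then sandwich $\tau_u(0)$ between $0$ and $0$, yielding $\tau_u(0) = 0$, which is exactly \eqref{int-003}.

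There is essentially no obstacle in this final deduction itself; the entire difficulty has been absorbed into establishing Theorem \ref{thm-int-002} (via the Sasakian decomposition formula and the directional Lelong number bound) and into verifying the finiteness asserted in Proposition \ref{prop-pfc-001}. The one subtle point I would take care to flag explicitly is precisely the role of that finiteness: were $\lambda_u(0)$ allowed to be infinite, the product $[\lambda_u(0)]^n \cdot \nu_u(0)$ would degenerate into an indeterminate $\infty \cdot 0$ and the implication would collapse. For this reason I regard the finiteness of the maximal directional Lelong number, rather than the arithmetic of the sandwich, as the load-bearing hypothesis of the corollary.
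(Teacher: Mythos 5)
Your proposal is correct and is exactly the paper's own deduction: the authors obtain Theorem \ref{cor-gr-003} by combining the upper bound $\tau_u(0)\leq 2C_n[\lambda_u(0)]^n\cdot\nu_u(0)$ of Theorem \ref{thm-gr-002} with the finiteness of $\lambda_u(0)$ from Proposition \ref{prop-pfc-001}, just as you do. Your explicit remark that this finiteness is the load-bearing input (to avoid an indeterminate $\infty\cdot 0$) matches the paper's emphasis as well.
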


The estimate in equation \eqref{int-002} is  stronger than the case in complex dimension two. 
This is because of a better understanding of the positivity conditions, see Lemma \ref{lem-ps-001}. 
However, this inequality is not sharp.
Moreover, 
this estimate fails if the plurisubharmonic function is no longer circular symmetric, see Example 6.13, \cite{Li23}.

In the last section, 
we provide a different proof of the decomposition formula
via Cartan's method of moving frames, see \cite{C87}, \cite{Chern}. 
This should be useful in the future work when there is no symmetry conditions. 

\bigskip

\textbf{Acknowledgment: }
We are very grateful to Prof. Xiuxiong Chen 
for his continuous support and encouragement in mathematics. 
This problem was raised to the second-named author when 
he was studying with Prof. Demailly in Fourier Institute, Grenoble. 
It is also a great pleasure to discuss with Song Sun, Chengjian Yao and Jian Wang.

The third-named author is supported by the NSFC (No. 11871445), 
the Stable Support for Youth Team in Basic Research Field, 
CAS(YSBR-001) and the Fundamental Research Funds for the Central Universities.

\bigskip
\smallskip

\section{Plurisubharmonic functions with circular symmetry}
\smallskip

In this section, we recall a few basic facts about 
$S^1$-invariant plurisubharmonic functions.
Denote by $z: = (z^0, \cdots, z^n)$  the complex Euclidean coordinate on $\bC^{n+1}$.
There is a natural $S^1$-action that sends 
$$ z \rightarrow e^{i\theta} z: = (e^{i\theta}z^0, \cdots, e^{i\theta} z^n), $$
for all angles $\theta\in \bR$. 
A domain $D$ is called balanced if it is invariant under this $S^1$-action, 
and a function $u$ on a balanced domain is said to be circular symmetric 
or $S^1$-invariant if it satisfies 
$ u(z) = u(e^{i\theta}z). $

Assume that $D$ contains the origin. 
We say that a plurisubharmonic function $u$ on $D$
has an isolated singularity at the origin,
if it is locally bounded on the punctured domain $D^*: = D - \{ 0\}$ and $u(0) = -\infty$.
Then we can consider the following two families of plurisubharmonic functions.

\begin{defn}
\label{def-pfc-001}
An $S^1$-invariant plurisubharmoinc function belongs to the family $\cF(D)$,
if it is $L^{\infty}_{loc}$ on $D^*$. 
\end{defn}

\begin{defn}
\label{def-pfc-002}
An $S^1$-invariant plurisubharmoinc function belongs to the family $\cF^{\infty}(D)$,
if it is $C^2$-continuous on $D^*$. 
\end{defn}
We  adopt  the following normalization condition, $ \sup_D u = -1,$
possibly after shrinking $D$ to a smaller balanced domain.

 In the previous work \cite{Li23},
 we have discovered several useful properties of a function $u\in \cF(D)$. 
Although these properties were stated in complex dimension two, 
they are adapted to all dimensions. We recall these facts.

\subsection{The residual mass}
Let $B_R\subset\bC^{n+1}$ be the open ball centered at the origin with radius $R>0$, 
and $B_R^*$ be the corresponding punctured ball. 
Denote $S_R$ by the boundary of the ball, satisfying the equation
\begin{equation}
\label{pfc-001}
\sum_{A=0}^n |z^A|^2 = R^2. 
\end{equation}

In this paper, we always take $D: =B_1$, and focus on the local behaviors 
of a function near the origin. 
Consider a function $u\in \cF(B_1)$,
and then its complex Monge-Amp\`ere measure is a closed positive $(n+1, n+1)$-current:
\begin{equation}
\label{pfc-002}
 \mbox{MA}(u): = dd^c u\wedge\cdots\wedge dd^c u = (dd^c u)^{n+1}, 
 \end{equation}
where the wedge is taken in the sense of Demailly and Bedford-Talyor.
For more details, see \cite{Dem93}, \cite{BT}, \cite{BT0},\cite{Ceg04} and \cite{Ceg98}.
Here we have used the notation 
$$d: = \d + \dbar; \ \ \ \ d^c: = \frac{i}{2}(\dbar -\d). $$
Fixing an $R\in (0,1)$, we take this measure on the ball as 
$$ \mbox{MA}(u)(B_R): = \int_{B_R} (dd^c u)^{n+1}. $$
The residual Monge-Amp\`ere mass of $u$ at the origin is equal to the limit: 
\begin{eqnarray}
\label{pfc-003}
\tau_u(0) &=& 
 \frac{1}{\pi^{n+1}} \mbox{MA}(u)(\{ 0 \})
\nonumber\\
&=& \frac{1}{\pi^{n+1}} \lim_{R\rightarrow 0} \mbox{MA}(u)(B_R). 
\end{eqnarray}

Next we introduce the standard regularization of a plurisubharmonic function. 
Let $\rho(z): = \rho(|z|)$ be a non-negative smooth mollifier in $\bC^{n+1}$
satisfying $\rho(r) = 0$ for all $r\geq 1$, and 
$$ \int_{\bC^{n+1}} \rho(z) \ d\lambda (z) =1, $$
where $d\lambda$ is the Lebesgue measure. 
Take its rescaling for each $\ep >0$ small as 
$$ \rho_{\ep}(z): =  \ep^{-2n-2} \rho(z / \ep). $$

For a function $u\in \cF(B_1)$, 
we can define its regularization 
as a sequence of smooth plurisubharmonic functions that decreases to $u$ pointwise: 
\begin{eqnarray}
\label{pfc-004}
u_{\ep}(z): &=&  (u * \rho_{\ep})(z)
\nonumber\\
&=& \int_{|z-y| \leq \ep} \rho_{\ep} (z-y) u(y) d\lambda(y)
\nonumber\\
&=& \int_{|w|\leq 1} u(z - \ep w) \rho(w) d\lambda(w).
\end{eqnarray}
This sequence converges to $u$ in $C^2$-norm on any compact subset of $B^*_{1}$
if $u\in \cF^{\infty}(B_1)$. Fixing a small $\delta>0$, 
the regularization $u_{\ep}$ is  in $\cF^{\infty}(B_{1-\delta})$ for all $\ep$ small enough. 
Then we have the following convergence of the Monge-Amp\`ere masses. 

\begin{lemma}[\cite{Li23}]
\label{lem-pfc-001}
For a function $u\in \cF(B_1)$, 
the complex Monge-Amp\`ere measure of its regularization $u_{\ep}$ converges on a ball as 
\begin{equation}
\label{pfc-005}
\emph{MA}(u) (B_R) = \lim_{\ep \rightarrow 0^+} \emph{MA}(u_{\ep})(B_R),
\end{equation}
for almost all $R\in (0,1)$. 
\end{lemma}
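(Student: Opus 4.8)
The plan is to promote the pointwise decrease $u_\ep\downarrow u$ to a genuine weak-$*$ convergence of the Monge-Amp\`ere measures $\mu_\ep:=\mbox{MA}(u_\ep)$ towards $\mu:=\mbox{MA}(u)$ on the \emph{entire} ball $B_1$, and then to read off convergence of the masses on $B_R$ for almost every radius via the portmanteau theorem. First I would work on the punctured ball $B_1^*$, where $u$ is locally bounded and the mollifications $u_\ep$ decrease pointwise to it. The Bedford-Taylor continuity theorem for the complex Monge-Amp\`ere operator along monotone sequences of locally bounded plurisubharmonic functions, see \cite{BT}, then gives $\mu_\ep\rightharpoonup\mu$ weakly-$*$ on $B_1^*$; in particular $\mu_\ep(A)\to\mu(A)$ for every Borel $A\Subset B_1^*$ with $\mu(\partial A)=0$. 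The only place left uncontrolled is the singularity at the origin.

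Next I would establish a uniform bound $\mu_\ep(B_{R'})\le C(R')$, for each fixed $R'\in(0,1)$ and all small $\ep$, from the Chern-Levine-Nirenberg inequalities together with the normalization $\sup_{B_1}u=-1$ and the local boundedness of $u$ on $B_1^*$. This renders the family $\{\mu_\ep\}$ tight, hence weak-$*$ precompact, and by the previous step every subsequential limit $\mu'$ restricts to $\mu$ on $B_1^*$. Consequently $\mu'=\mu+c\,\delta_0$ for some constant $c\ge 0$, where $\delta_0$ is the Dirac mass at the origin; the whole content of the lemma is to show $c=0$.

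The one inequality that does not already follow is the upper bound $\limsup_\ep\mu_\ep(B_r)\le\mu(\overline{B_r})$, in which the approximating masses are controlled by the closed-ball mass of the limit itself. Granting it, the portmanteau lower bound on the open ball gives $\mu(B_r)+c\le\liminf_\ep\mu_\ep(B_r)$ along the subsequence, whence $c\le\mu(\overline{B_r})-\mu(B_r)=\mu(S_r)$; choosing any radius $r$ with $\mu(S_r)=0$ forces $c=0$, and as this is independent of the subsequence the entire family converges weakly-$*$ to $\mu$ on $B_1$. To prove the upper bound I would exploit the circular symmetry: since $\rho_\ep$ is radial and $u$ is $S^1$-invariant, each $u_\ep$ is smooth and $S^1$-invariant, Stokes' theorem represents $\mu_\ep(B_r)=\int_{S_r}d^c u_\ep\wedge(dd^c u_\ep)^n$ as a boundary integral over the sphere, and for a generic radius one passes to the limit in this term using the convergence already secured on a neighbouring annulus, or invokes the comparison principle directly. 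I expect this to be the main obstacle: Bedford-Taylor convergence controls the measures only away from the singularity, and a priori the mollification could pile mass onto the singular point in the limit, so a genuinely global input---the Stokes identity on spheres, where the radial mollifier and the $S^1$-symmetry keep the boundary terms tractable---is what closes the gap. Once $c=0$ is known, the finite measure $\mu$ charges only countably many of the mutually disjoint spheres $\{S_R\}_{R\in(0,1)}$, so the portmanteau theorem yields $\mu_\ep(B_R)\to\mu(B_R)$ for almost every $R\in(0,1)$, as claimed.
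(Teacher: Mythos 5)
Your overall architecture coincides with the paper's at the final step: once one has weak-$*$ convergence of $\mbox{MA}(u_{\ep})$ to $\mbox{MA}(u)$ on the whole ball, the two portmanteau inequalities \eqref{pfc-006} plus the fact that the mutually disjoint spheres $S_R$ can carry positive $\mbox{MA}(u)$-mass for at most countably many $R$ give the claim for almost every radius. The difficulty is that the central ingredient --- ruling out an extra Dirac term $c\,\delta_0$ in the subsequential limits, equivalently the upper bound $\limsup_{\ep}\mbox{MA}(u_{\ep})(\overline{B}_r)\le \mbox{MA}(u)(\overline{B}_r)$ --- is exactly the step you do not prove: everything before it (Bedford--Taylor on $B_1^*$, Chern--Levine--Nirenberg tightness) only controls the measures away from the origin, and everything after it is explicitly conditional on it. As written, this is a genuine gap, not a routine verification.

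The remedy you sketch is also at risk of circularity. To pass to the limit in $\int_{S_r}d^c u_{\ep}\wedge(dd^c u_{\ep})^n$ you must identify the limiting boundary integral $\int_{S_r}d^c u\wedge(dd^c u)^n$ with $\mbox{MA}(u)(\overline{B}_r)$ \emph{including} the residual mass at the origin; by Stokes on the annulus $B_r\setminus\overline{B}_{\delta}$ this identification amounts to showing that $\lim_{\delta\to 0}\int_{S_{\delta}}d^c u\wedge(dd^c u)^n$ equals $\mbox{MA}(u)(\{0\})$, which is essentially the statement that no mass is created or lost at $0$ --- i.e.\ the very thing you are trying to establish. (Note also that Lemma \ref{lem-pfc-002} is stated only for $u\in\cF^{\infty}(B_1)$, so it cannot be invoked for the singular limit.) The clean way to close the gap, and what the displayed inequalities \eqref{pfc-006} in the paper encode, is Demailly's monotone convergence theorem for the Monge--Amp\`ere operator of plurisubharmonic functions that are locally bounded outside a compact subset of the domain (here the single point $0$): for such functions $(dd^c u_{\ep})^{n+1}\to(dd^c u)^{n+1}$ weakly-$*$ on all of $B_1$, origin included, with no need for the $\mu+c\,\delta_0$ decomposition or the circular symmetry. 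With that theorem quoted, the rest of your argument goes through verbatim.
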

This convergence follows  the Portemanteau type inequalities below and the fact that the Monge-Amp\`ere measure $\mbox{MA}(u)$ 
vanishes on the boundary hypersphere $S_R$ for almost all $R\in (0,1)$. In fact we have 
\begin{equation}
\label{pfc-006}
\begin{split}
&\mbox{MA}(u)(B_R) \leq \liminf_{\ep\rightarrow 0^+}   \mbox{MA}(u_{\ep})(B_R), \text{on any open ball}\; B_R \subsetneq B_1\\
&\mbox{MA}(u)(\overline{B}_R) \geq \limsup_{\ep\rightarrow 0^+}   \mbox{MA}(u_{\ep})(\overline{B}_R), \text{on any closed ball}\; \overline{B_R}\subset B_1.
\end{split}
\end{equation}

We note that the convergence in Lemma \ref{lem-pfc-001} holds for every $R\in (0,1)$, if we assume $u\in \cF^{\infty}(B_1)$.
Then by the Stokes Theorem we have

\begin{lemma}[\cite{Li23}]
\label{lem-pfc-002}
For $u\in \cF^{\infty}(B_1)$ and for all $R\in (0,1)$, 
\begin{equation}
\label{pfc-008}
\int_{B_R} (dd^c u)^{n+1} = \int_{S_R} d^c u \wedge (dd^c u)^n,
\end{equation}

\end{lemma}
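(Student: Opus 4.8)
The plan is to deduce the identity from Stokes' Theorem applied to the smooth regularizations $u_\ep$, followed by passage to the limit $\ep\to 0^+$; the point mass at the origin never intervenes because we work with the globally smooth $u_\ep$ first and only take limits at the very end.

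First I would record the algebraic identity that drives the computation. For any $C^2$ plurisubharmonic function $v$ we have $d(d^c v)=dd^c v$, and since $d(dd^c v)=dd(d^c v)=0$ the form $dd^c v$ is $d$-closed; hence so is its power $(dd^c v)^n$. As $d^c v$ has degree one, the Leibniz rule gives
\begin{equation*}
d\bigl[\, d^c v \wedge (dd^c v)^n \,\bigr] = dd^c v \wedge (dd^c v)^n - d^c v \wedge d\bigl[(dd^c v)^n\bigr] = (dd^c v)^{n+1},
\end{equation*}
the last term dropping out by closedness. Applying this to $u_\ep$, which is smooth on all of $B_{1-\delta}$ for $\ep$ small, I would invoke Stokes' Theorem on the smooth domain $B_R$ with boundary $S_R$ (for $R\in(0,1)$):
\begin{equation*}
\int_{B_R}(dd^c u_\ep)^{n+1} = \int_{B_R} d\bigl[\, d^c u_\ep \wedge (dd^c u_\ep)^n \,\bigr] = \int_{S_R} d^c u_\ep \wedge (dd^c u_\ep)^n.
\end{equation*}
No excision of a small ball around the origin is needed here, precisely because the mollification makes $u_\ep$ smooth across the singular point.

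Next I would let $\ep\to 0^+$ on both sides. On the left, the hypothesis $u\in\cF^{\infty}(B_1)$ upgrades the convergence of Lemma \ref{lem-pfc-001} from almost every $R$ to every $R\in(0,1)$: since $u$ is $C^2$ on the punctured ball $B_1^*$, the measure $\mbox{MA}(u)$ has continuous density there and therefore charges no sphere $S_R$, so $\mbox{MA}(u)(B_R)=\mbox{MA}(u)(\overline B_R)$ and the two Portemanteau inequalities in \eqref{pfc-006} squeeze the limit to exist and equal $\int_{B_R}(dd^c u)^{n+1}$. On the right, $S_R$ is a compact subset of $B_1^*$ on which $u_\ep\to u$ in $C^2$-norm; because the integrand $d^c u_\ep\wedge(dd^c u_\ep)^n$ is a fixed polynomial in the first and second derivatives of $u_\ep$, this uniform $C^2$-convergence yields $\int_{S_R} d^c u_\ep\wedge(dd^c u_\ep)^n \to \int_{S_R} d^c u\wedge(dd^c u)^n$. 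Combining the two limits produces the asserted equality.

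The main obstacle is the justification that the left-hand convergence holds for \emph{every} $R$ rather than almost every $R$, and this is exactly where the stronger regularity $u\in\cF^{\infty}(B_1)$ (rather than merely $u\in\cF(B_1)$) is indispensable: it guarantees that $\mbox{MA}(u)$ places no mass on any individual hypersphere $S_R$ with $R\in(0,1)$, so the inequalities of \eqref{pfc-006} become equalities. The remaining ingredients — the Leibniz identity, Stokes' Theorem on $B_R$, and the continuity of the Monge-Amp\`ere operator under $C^2$-convergence on the compact hypersurface $S_R$ — are routine.
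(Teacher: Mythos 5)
Your proposal is correct and follows essentially the same route the paper intends: the paper's one-line justification is precisely the remark that for $u\in\cF^{\infty}(B_1)$ the convergence of Lemma \ref{lem-pfc-001} holds for every $R$ (since $\mbox{MA}(u)$ is continuous on $B_1^*$ and charges no hypersphere), combined with Stokes' Theorem applied to the smooth regularizations. Your write-up simply fills in the details of that argument — the Leibniz identity, the Portemanteau squeeze, and the $C^2$-convergence on the compact sphere — all correctly.
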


\subsection{Maximal directional Lelong numbers}
Denote $S_u(0, r)$ to be the average of $u$ on the sphere $S_r$,
\begin{equation}
\label{pfc-009}
S_u(0,r):=  \frac{1}{a_{2n+1}}   \int_{|\xi|=1} u(r\xi) d\sigma(\xi), 
\end{equation}
where $d\sigma$ is the area form of the unit sphere $S^{2n+1}$, 
and $a_{2n+1}$ is its total area. 
The Lelong number of a plurisubharmonic function $u$
at the origin is defined to be the following limit 
$$ \nu_u(0) = \lim_{r\rightarrow 0^+} \nu_u(0, r),\; \text{with}\;  \nu_u(0,r):= r\d_r^- S_u(0, r). $$

In fact,  $S_u(0, r)$ is a convex and non-decreasing function of $t: = \log r $, 
and hence the limit $\nu_u(0)$ always exits. 
Assume the function $u$ is in the family $\cF(B_1)$ from now on. 
Denote $\l_\z$ by the complex line through the origin 
in the complex direction $\z\in \bC\bP^n$ as 
$$\l_{\z}: = \bC\cdot [\z], $$
where $[\z]$ means a homogeneous coordinate of $\z$ in $(\bC^{n+1})^*$. 
Thanks to the plurisubharmonicity and $S^1$-symmetry,
the restriction $u|_{\l_\z}$
is a convex and non-decreasing function of  the variable $t\in (-\infty, 0)$. 

It is more convenient to use a parametrization $(r, \theta, \z, \bar\z)$
of the space $(\bR^{2n+2})^* \cong (\bC^{n+1})^*$ 
induced by the fiber maps of the Hopf-fiberation 
\\

\begin{tikzcd}
\ \ \ \ \ \ \ \ \ \ \ \ \ \ \ \ \ \ \ \ \ \ \ \ \ \ \ \ \ S^1 \arrow[r, hook] & S^{2n+1} \arrow[r, "p"] & \bC\bP^n.
\end{tikzcd}
\\


In this parametrization,
the $r$-variable denotes the radius function, and 
$\theta$ stands for the direction induced by the $S^1$-action. 
Then we can re-write the function $u$ under this parametrization as
$$ u_t(\z): = \hat{u}(t, \z, \bar\z) = u(e^t, \z, \bar\z), $$
and denote its derivative with respect to $t$ by  
$$ \dot{u}_t(\z):= \d_t \hat{u}(t, \z,\bar\z) = \frac{d}{dt} u|_{\l_{\z}}. $$
Then it follows for each $\z\in \bC\bP^n$ fixed
\begin{equation}
\label{pfc-010}
\dot{u}_t \geq 0; \ \ \ \ \ddot{u}_t \geq 0,
\end{equation}
for almost all $t\in (-\infty, 0)$.
Moreover, the Lelong number at zero of the restriction $u|_{\l_{\z}}$ 
is equal to 
$$ \nu_{u|_{\l_{\z}}} (0) = \lim_{t\rightarrow -\infty} \dot{u}_t (\z) $$
It is a well-known fact that
the Lelong number of a plurisubharmonic function is invariant 
under the restriction to almost all complex directions in $\bC\bP^{n}$. 
This means that we have 
$$ \nu_u(0) = \nu_{u|_{\l_{\z}}} (0),$$
for almost all $\z\in \bC\bP^n$. 
In particular, the Lelong number $\nu_u(0)$ is the infimum of all such restrictions.
Then the following result follows from the dominated convergence theorem. 

\begin{lemma}[\cite{Li23}]
\label{lem-pfc-003}
For any $u\in \cF^{\infty}(B_1)$, we have 
\begin{equation}
\label{pfc-0101}
[\nu_u(0)]^p = \lim_{t\rightarrow -\infty} \frac{1}{\pi^n} \int_{\bC\bP^n} (\dot{u}_t)^p \omega_{FS}^n, 
\end{equation}
for all $p = 1, \cdots, n+1$.
Here $\omega_{FS}$ stands for the Fubini-Study metric on $\bC\bP^n$ with the normalization 
$$\int_{\bC\bP^n} \omega_{FS}^n = \pi^n.  $$
\end{lemma}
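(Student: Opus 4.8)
The plan is to pass the limit $t\to -\infty$ through the integral sign and then to invoke the genericity of the Lelong number along complex lines. First I would identify the pointwise limit of the integrand. For each fixed $\z\in\bC\bP^n$, the convexity $\ddot{u}_t\geq 0$ in equation \eqref{pfc-010} shows that $\dot{u}_t(\z)$ is non-decreasing in $t$, while $\dot{u}_t\geq 0$ keeps it non-negative; hence
\[ g(\z):=\lim_{t\to-\infty}\dot{u}_t(\z)=\nu_{u|_{\l_\z}}(0) \]
exists for every $\z$, and as $t\to-\infty$ the family $\{\dot{u}_t\}$ decreases monotonically to $g$.

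The key analytic step is the interchange of limit and integration, for which I would produce an integrable dominating function. Fix any $t_0\in(-\infty,0)$. Monotonicity gives $0\leq\dot{u}_t(\z)\leq\dot{u}_{t_0}(\z)$, and therefore $(\dot{u}_t)^p\leq(\dot{u}_{t_0})^p$, for all $t\leq t_0$ and all $\z$. Since $u\in\cF^{\infty}(B_1)$ is $C^2$ on the punctured ball $B_1^*$, the function $\dot{u}_{t_0}$ is continuous, hence bounded, on the compact base $\bC\bP^n$; as the Fubini-Study volume is finite, $(\dot{u}_{t_0})^p\in L^1(\bC\bP^n,\omega_{FS}^n)$. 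The dominated convergence theorem then yields
\[ \lim_{t\to-\infty}\int_{\bC\bP^n}(\dot{u}_t)^p\,\omega_{FS}^n=\int_{\bC\bP^n}g^p\,\omega_{FS}^n. \]

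Finally I would evaluate the right-hand integral using the genericity recalled just above: $\nu_u(0)$ is the infimum of the directional Lelong numbers and coincides with $\nu_{u|_{\l_\z}}(0)$ for almost every $\z$. Thus $g(\z)=\nu_u(0)$ for $\omega_{FS}^n$-almost every $\z$, so that $g^p\equiv[\nu_u(0)]^p$ almost everywhere. Together with the normalization $\int_{\bC\bP^n}\omega_{FS}^n=\pi^n$ this gives $\int_{\bC\bP^n}g^p\,\omega_{FS}^n=[\nu_u(0)]^p\,\pi^n$, and dividing by $\pi^n$ establishes the identity for each $p=1,\dots,n+1$.

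The only delicate point, and hence the step I expect to be the main obstacle, is securing the integrable dominating function: this is precisely where the two structural hypotheses must cooperate, the convexity $\ddot{u}_t\geq 0$ supplying the monotone bound $\dot{u}_{t_0}$, and the $C^2$-regularity of $u$ on $B_1^*$ making that bound bounded on the compact manifold $\bC\bP^n$. Everything else is formal once the almost-everywhere identification $g\equiv\nu_u(0)$ is granted.
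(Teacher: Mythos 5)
Your proposal is correct and follows essentially the same route as the paper: the paper derives this lemma from the dominated convergence theorem, using exactly the monotonicity of $\dot{u}_t$ in $t$ (so that $\dot{u}_{t_0}$ dominates, and is bounded by $C^2$-regularity on the compact sphere) together with the standard fact that $\nu_u(0)=\nu_{u|_{\l_\z}}(0)$ for almost every direction $\z\in\bC\bP^n$. Your write-up merely fills in the details the paper leaves implicit.
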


In order to apply the dominated convergence theorem to a general $u\in \cF(B_1)$, 
we actually need an upper bound of $\dot{u}_t(\z)$ for all $\z\in\bC\bP^n$. 
This leads us to consider the supremum of all such restrictions. 

\begin{defn}
\label{def-pfc-003}
The maximal  directional Lelong number of a function $u\in \cF(B_1)$ at a distance $A>0$ is defined to be the following 
\begin{equation}
\label{pfc-011}
M_A(u): = \sup_{\z\in \bC\bP^n} \d_t^+ u_t(\z) |_{t=-A}\in [0, +\infty]. 
\end{equation}
\end{defn}

Thanks to the log-convexity of each restriction $u|_{\l_{\z}}$,
it is apparent that the number
$M_A(u)$ is well-defined, and is non-negative and non-increasing in $A$.
Then we can take its limit as $A\rightarrow +\infty$. 

\begin{defn}
\label{def-pfc-004}
The maximal directional Lelong number of a function $u\in \cF(B_1)$ at the origin is defined to be the following
\begin{equation}
\label{pfc-012}
\lambda_u(0): = \lim_{A\rightarrow +\infty} M_A(u). 
\end{equation}
\end{defn}

A fundamental fact about $\lambda_u(0)$ is that for a $u\in\cF^{\infty}(B_1)$, 
 $\lambda_u(0)$ is always finite, which
is crucial in our estimate of the residual Monge-Amp\`ere mass .
For the convenience of readers, 
we will recall the proof as follows.

\begin{prop}[\cite{Li23}]
\label{prop-pfc-001}
For a function $u\in \cF(B_1)$, 
its maximal directional Lelong number $M_A(u)$ is finite for all $A>0$.
In particular, we have 
\begin{equation}
\label{pfc-013}
0\leq \lambda_u(0) < +\infty.
\end{equation}
\end{prop}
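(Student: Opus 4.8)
The plan is to produce a bound on the right $t$-derivative $\d_t^+ u_t(\z)|_{t=-A}$ that is uniform over all directions $\z\in\bC\bP^n$; taking the supremum then gives finiteness of $M_A(u)$ directly. The only inputs needed are the log-convexity of the slices $t\mapsto u_t(\z)$, which has already been recorded, together with the local boundedness built into the definition of $\cF(B_1)$.

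First I would fix $A>0$ and choose an intermediate level, say $t_1:=-A/2\in(-A,0)$. For each fixed $\z$, the restriction $t\mapsto u_t(\z)$ is convex and non-decreasing on $(-\infty,0)$, so its right derivative at $t=-A$ is dominated by the secant slope through the two levels $t=-A$ and $t=t_1$:
$$
\d_t^+ u_t(\z)\big|_{t=-A}\ \le\ \frac{u_{t_1}(\z)-u_{-A}(\z)}{t_1+A}\ =\ \frac{2}{A}\big(u_{-A/2}(\z)-u_{-A}(\z)\big).
$$
Now I would bound the numerator uniformly in $\z$. The upper level is controlled by the normalization $\sup_{B_1}u=-1$, so $u_{-A/2}(\z)\le -1$ for every $\z$. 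For the lower level I would use that the sphere $S_{e^{-A}}$ is a compact subset of the punctured ball $B_1^*$; since $u\in L^\infty_{loc}(B_1^*)$, the quantity $m_A:=\inf_{S_{e^{-A}}}u$ is a finite real number, and by $S^1$-invariance $u_{-A}(\z)\ge m_A$ for all $\z$. Combining these estimates yields
$$
M_A(u)=\sup_{\z\in\bC\bP^n}\d_t^+ u_t(\z)\big|_{t=-A}\ \le\ \frac{2}{A}\,(-1-m_A)\ <\ +\infty,
$$
which is the first assertion.

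For the second assertion I would invoke the monotonicity already noted after Definition \ref{def-pfc-003}: $M_A(u)\ge 0$ because each slice is non-decreasing in $t$, and $M_A(u)$ is non-increasing in $A$ by convexity. Consequently the limit $\lambda_u(0)=\lim_{A\to+\infty}M_A(u)$ exists, and for any fixed $A_0>0$ one has $0\le\lambda_u(0)\le M_{A_0}(u)<+\infty$, establishing \eqref{pfc-013}.

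The main, and essentially the only, subtlety is the uniform lower bound $u_{-A}(\z)\ge m_A$. This is precisely where the hypothesis $u\in\cF(B_1)$, i.e.\ local boundedness on $B_1^*$, is indispensable: it converts the pointwise-in-$\z$ estimate into a bound independent of the direction. Without it the infimum of $u$ over the sphere $S_{e^{-A}}$ could be $-\infty$, and the supremum defining $M_A(u)$ could genuinely diverge, consistent with the earlier remark that taking suprema need not preserve finiteness in general.
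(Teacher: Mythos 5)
Your proof is correct and is essentially the paper's argument run in the forward direction: the paper proves the same bound by contradiction (a direction with slope exceeding $2C_R/A$ at $t=-A$ would force $u_{-A/2}\geq 0$ by convexity, contradicting negativity), while you extract the explicit estimate $M_A(u)\leq \frac{2}{A}(-1-m_A)$ from the same secant-slope comparison, the same uniform lower bound on $u|_{S_{e^{-A}}}$ from $L^\infty_{loc}$, and the same normalization. No gap; the two write-ups are interchangeable.
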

\begin{proof}
For each hypersphere $S_R$ with $R\in (0,1)$, 
there is a constant $C_R>0$ such that we have 
$$ u|_{S_R} > - C_R. $$ 
This is because $u$ is an $L^{\infty}_{loc}$-function defined everywhere in $B^*_1$, 
and hence its restriction to the hypersphere is 
in the space $L^{\infty}(S_R)$.

Suppose on the contrary, we have $M_A(u) = +\infty$
for some $A>0$. Then there exists a subsequence of points $\z_j \in \bC\bP^n$
such that the slope  
\begin{equation}
\label{fit-001}
\d_t^+ u_t(\z_j) |_{t = -A} = \frac{d}{dt}|_{t= (-A)^+} \left(  u|_{\l_{\z_j}}\right)
\end{equation}
diverges to $+\infty$ as $j\rightarrow +\infty$.
In particular, we can pick up a point $\xi$ among this subsequence satisfying 
\begin{equation}
\label{fit-002}
\d_t^+ u_t(\xi) |_{t = -A} > \frac{2C_R}{A},
\end{equation}
for $R = e^{-A}$.
However, as a convex function of $t$, the graph of $u_t(\xi)$ 
is above the following straight line 
$$  y(x) = \frac{2C_R}{A} (x+A) -C_R,$$
for all $t\in [-A, -A/2]$. 
Hence we conclude 
$$ u_{-A/2}(\xi) \geq y(-A/2) \geq 0. $$
This contradicts to the fact that $u$ is a negative function in $B_1$,
and then our result follows.
\end{proof}



The maximal directional Lelong numbers 
provide a way to measure the difference of the infimums of $u$ on the hyperspheres. 

\begin{lemma}
\label{lem-fit-001}
For a function $u\in \cF(B_1)$ and two constants $1< A_1 < A_2$, 
we have the estimate 
\begin{equation}
\label{fit-003}
- \inf_{S_{R_2}} u \leq \int_{A_1}^{A_2} M_T (u) dT - \inf_{S_{R_1}} u,
\end{equation}
where $R_i = e^{-A_i}$ for $i = 1,2$.
\end{lemma}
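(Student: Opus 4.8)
The plan is to reduce the asserted inequality to the equivalent form
$$\inf_{S_{R_1}} u - \inf_{S_{R_2}} u \leq \int_{A_1}^{A_2} M_T(u)\, dT,$$
and then establish this by integrating the radial derivative of $u$ along each complex direction \emph{separately} before taking infima. First I would fix a direction $\z\in\bC\bP^n$ and use the convexity of the restriction $u|_{\l_\z}$ in the variable $t=\log r$ recorded above: this makes $t\mapsto u_t(\z)$ locally absolutely continuous on the compact interval $[-A_2,-A_1]$, so the fundamental theorem of calculus for convex functions gives
$$u_{-A_1}(\z)-u_{-A_2}(\z)=\int_{-A_2}^{-A_1}\d_t^+ u_t(\z)\, dt.$$
Performing the substitution $t=-T$ (so that $dt=-dT$ and the limits $-A_2,-A_1$ become $A_2,A_1$) rewrites the right-hand side as $\int_{A_1}^{A_2}\bigl(\d_t^+ u_t(\z)\bigr)\big|_{t=-T}\, dT$.

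The key step is the uniform slope bound. By Definition \ref{def-pfc-003} we have $\bigl(\d_t^+ u_t(\z)\bigr)\big|_{t=-T}\leq M_T(u)$ simultaneously for every $\z$ and every $T$, which lets me replace the integrand by the $\z$-independent quantity $M_T(u)$ and conclude
$$u_{-A_1}(\z)-u_{-A_2}(\z)\leq \int_{A_1}^{A_2} M_T(u)\, dT$$
for all $\z$. The right-hand side here is a finite constant: finiteness follows from Proposition \ref{prop-pfc-001} together with the fact that $M_T(u)$ is non-increasing in $T$, so that $M_T(u)\leq M_{A_1}(u)<+\infty$ on $[A_1,A_2]$. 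Because this constant does not depend on $\z$, I may pass to the infimum over $\z$ on both sides, using the monotonicity of the infimum.

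To finish I would invoke the $S^1$-invariance of $u$, which forces $u$ to be constant along the Hopf fibers and hence identifies the pointwise infimum over the sphere with the infimum over directions, namely $\inf_{S_{R_i}} u=\inf_{\z\in\bC\bP^n} u_{-A_i}(\z)$ for $i=1,2$. Combining this with the previous display yields $\inf_{S_{R_1}} u\leq \inf_{S_{R_2}} u+\int_{A_1}^{A_2} M_T(u)\, dT$, which is precisely the claim after rearrangement. The only genuinely delicate points are ensuring the radial infima are finite, which is guaranteed by the local boundedness of $u\in\cF(B_1)$ on $B_1^*$ exactly as in the proof of Proposition \ref{prop-pfc-001}, and checking that the uniform slope bound survives the passage to the infimum; both are routine once the convexity in $t$ and the sup-definition of $M_T(u)$ are in hand, so I expect no serious obstacle beyond carefully bookkeeping the sign in the $t=-T$ substitution.
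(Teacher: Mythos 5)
Your proposal is correct and follows essentially the same route as the paper's own proof: integrate the (right) $t$-derivative of the convex restriction $u|_{\l_\z}$ over $[-A_2,-A_1]$, bound the integrand uniformly by $M_T(u)$ using its sup-definition, and then pass to infima over $\z$ (the paper does this last step via a minimizing sequence $\z_k$ for $u_{-A_2}$, which is equivalent to your direct infimum argument). The extra bookkeeping you include — the $t=-T$ substitution, the finiteness of $M_T(u)$ from Proposition \ref{prop-pfc-001}, and the identification $\inf_{S_{R_i}}u=\inf_\z u_{-A_i}(\z)$ via $S^1$-invariance — is all consistent with what the paper leaves implicit.
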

\begin{proof}
For a complex direction $\z\in \bC\bP^n$, 
the function $u_t(\z)$ is convex, and hence it is Lipschitz continuous. 
Then it follows from the Fundamental Theorem of Calculus 
\begin{eqnarray}
\label{fit-004}
u_{-A_1}(\z) - u_{-A_2}(\z)  &=& \int_{-A_2}^{-A_1} \dot{u}_t (\z) dt
\nonumber\\
&\leq& \int_{A_1}^{A_2} M_T (u) dT.
\end{eqnarray}
Then we have 
\begin{equation}
\label{fit-005}
\inf_{S_{R_1}} u - u_{-A_2}(\z) \leq \int_{A_1}^{A_2} M_T (u) dT.
\end{equation}
Take a sequence of points $\z_k\in \bC\bP^n$ satisfying 
$$ u_{-A_2}(\z_k) \rightarrow \inf_{S_{R_2}} u, $$
as $k \rightarrow +\infty$. Then our result follows.
\end{proof}
In particular, 
we have $M_A(u) >0$ for all $A>0$, 
if $u$ has an isolated singularity at the origin.

\subsection{Regularization and convergence}
As a direct consequence of Proposition \ref{prop-pfc-001},
$\dot{u}_t  = \d_t \hat{u} = r\d_r u$
is  an $L^{\infty}_{loc}$-function in $B_1^*$.
Thanks to the slicing theory, we first conclude that Lemma \ref{lem-pfc-003} holds for all functions in $\cF(B_1)$.
Then it is legal to introduce the following functionals for almost all $t\in (-\infty, 0)$,  
$$ \cI(u_t): = \int_{\bC\bP^n} u_t\omega_{FS}^n; \ \ \ \ \  I(u_t): = \int_{\bC\bP^n} ( \dot{u}_t) \omega_{FS}^n,  u\in \cF(B_1).$$

We recall a few basic facts about these functionals. 

\begin{enumerate}
\item[\textbf{(i)}] 
the functional $\cI(u_t)$ is a convex and non-decreasing function of $t\in (-\infty, -1)$, 
and it is a primitive of the functional $I(u_t)$ along $t$;
\smallskip
\item[\textbf{(ii)}]
the functional $I(u_t)$ is a non-negative and non-decreasing $L^{\infty}$-function of $t\in (-\infty, -1)$;
\smallskip
\item[\textbf{(iii)}]
the functional $I(u_t)$ converges as $t\rightarrow -\infty$
$$ I(u_t) \rightarrow \pi^n \nu_u(0). $$
\end{enumerate}

Next the standard regularization $u_{\ep}$ will be utilized to perform the approximation of these functionals. 
Write the $t$-derivatives of the regularization as 
$$ \dot{u}_{\ep, t} := r\d_r u_{\ep} = r\d_r ( u * \rho_{\ep}),$$
and then we have the following Friedrichs' type estimate.

\begin{lemma}[\cite{Li23}]
\label{lem-pfc-004}
For a function $u\in \cF(B_1)$, a small number $\delta>0$ 
and a point $z\in B^*_{1-2\delta}$, we have, for all $\ep < \min\{ |z|, \delta \}$,
\begin{equation}
\label{pfc-014}
| r\d_r (u * \rho_{\ep})(z) - r( \d_r u * \rho_{\ep})(z) | \leq 2\ep ||\nabla u ||_{L^1(B_{1-\delta})}.
\end{equation}

\end{lemma}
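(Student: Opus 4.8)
The plan is to read this as a Friedrichs-type commutator estimate between mollification and the Euler (radial logarithmic) vector field. Working in the underlying real coordinates $w=(w_1,\dots,w_{2n+2})$ on $\bR^{2n+2}\cong\bC^{n+1}$, I would write the radial logarithmic derivative as the Euler field $r\d_r=\sum_k w_k\d_{w_k}$, so that $\d_r u(w)=|w|^{-1}\sum_k w_k\d_{w_k}u$. Since $u\in\cF(B_1)$ is plurisubharmonic and locally bounded on $B_1^*$, its distributional gradient $\nabla u$ lies in $L^1_{loc}(B_1^*)$, and for $\ep<\min\{|z|,\delta\}$ the convolution $u*\rho_\ep$ is smooth near $z$ with $\d_k(u*\rho_\ep)=(\d_k u)*\rho_\ep$; this is what makes the two sides of the estimate meaningful.

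First I would express both terms as a single integral against the common kernel $\rho_\ep(z-y)$. On one hand,
\[ r\d_r(u*\rho_\ep)(z)=\sum_k z_k\big((\d_k u)*\rho_\ep\big)(z)=\int \big(z\cdot\nabla u(y)\big)\,\rho_\ep(z-y)\,d\lambda(y), \]
while on the other hand, using $\d_r u(y)=|y|^{-1}\,y\cdot\nabla u(y)$,
\[ r\big(\d_r u*\rho_\ep\big)(z)=\int \frac{|z|}{|y|}\,\big(y\cdot\nabla u(y)\big)\,\rho_\ep(z-y)\,d\lambda(y). \]
Subtracting, the integrand collapses to $\big(z-\tfrac{|z|}{|y|}y\big)\cdot\nabla u(y)=|z|\,(\hat z-\hat y)\cdot\nabla u(y)$, where $\hat z,\hat y$ denote the unit vectors in the directions of $z$ and $y$.

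The decisive step is then the elementary geometric bound $|\hat z-\hat y|\le 2|z-y|/|z|$ for the difference of two unit vectors, so that $|z|\,|\hat z-\hat y|\le 2\ep$ whenever $|z-y|\le\ep$: the radial direction varies by only $O(\ep/|z|)$ across the mollification ball, and the weight $|z|$ exactly absorbs the denominator. Hence on the support of $y\mapsto\rho_\ep(z-y)$ the integrand is dominated by $2\ep\,|\nabla u(y)|\,\rho_\ep(z-y)$, which gives the pointwise bound $2\ep\,(|\nabla u|*\rho_\ep)(z)$. Since $z\in B^*_{1-2\delta}$ and $\ep<\delta$ force $B(z,\ep)\subset B_{1-\delta}$, controlling this convolution by the $L^1$-mass of $\nabla u$ over $B_{1-\delta}$ through the standard mollifier estimate produces the asserted right-hand side $2\ep\,\|\nabla u\|_{L^1(B_{1-\delta})}$.

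The main obstacle is not the geometry but the justification of the two integral representations for a merely plurisubharmonic $u$: one must ensure that $\nabla u\in L^1_{loc}(B_1^*)$, that $\d_r u$ is defined almost everywhere, and that differentiation commutes with convolution in the distributional sense. A clean route is to establish the identity and the estimate first for the $C^2$ approximants and then pass to the limit using $L^1_{loc}$-convergence of the gradients; alternatively, one may integrate by parts to throw all derivatives onto the kernel, reducing the commutator to $\ep\int\nabla u(z-\ep v)\cdot v\,\rho(v)\,d\lambda(v)$ and thereby never differentiating $u$ directly. The remaining care is simply to keep $z$ away from the origin, which the hypothesis $\ep<|z|$ guarantees, so that the factor $|y|^{-1}$ stays integrable on the mollification ball.
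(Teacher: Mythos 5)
The paper does not reproduce a proof of this lemma (it is quoted from \cite{Li23}), so I am comparing your argument against the standard Friedrichs-type commutator proof that the statement is clearly meant to invoke. Your core computation is correct and is surely the intended one: writing both terms against the kernel $\rho_\ep(z-y)$, the integrand of the difference is $|z|\,(\hat z-\hat y)\cdot\nabla u(y)$, and the elementary bound $|\hat z-\hat y|\le 2|z-y|/|z|$ (valid since $\ep<|z|$ keeps $y\neq 0$ on the support) gives the pointwise domination by $2\ep\,|\nabla u(y)|\,\rho_\ep(z-y)$. The regularity issues you raise are also handled correctly: $u\in W^{1,p}_{loc}(B_1)$ for $p<2$, so $\nabla u\in L^1_{loc}$ and differentiation commutes with convolution.

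There is, however, a genuine gap in your last step. From the pointwise domination you correctly obtain
\begin{equation*}
\bigl| r\d_r (u * \rho_{\ep})(z) - r( \d_r u * \rho_{\ep})(z) \bigr| \le 2\ep\,\bigl(|\nabla u| * \rho_\ep\bigr)(z),
\end{equation*}
but the claim that ``the standard mollifier estimate'' bounds $\bigl(|\nabla u| * \rho_\ep\bigr)(z)$ by $\|\nabla u\|_{L^1(B_{1-\delta})}$ pointwise is false: a mollification is a weighted \emph{average}, and since $\|\rho_\ep\|_{L^\infty}\sim \ep^{-2n-2}$ one only gets $\bigl(|\nabla u|*\rho_\ep\bigr)(z)\le \ep^{-2n-2}\|\rho\|_\infty\,\|\nabla u\|_{L^1(B(z,\ep))}$, which is not controlled by the $L^1$-mass; if $|\nabla u|$ concentrates near $z$ the average exceeds the total integral. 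The statement is recovered only after integrating in $z$ over $B_{1-2\delta}$ and applying Fubini (Young's inequality), i.e.\ as a bound in $L^1(B_{1-2\delta})$-norm --- which is the classical form of Friedrichs' lemma and is exactly what the paper uses downstream (the $O(\ep)$ comparison in equation (2.15) and the strong $L^p$-convergence of $\dot u_{\ep,t}$ only require convergence in integrated norms). You should either carry out this integration explicitly, or, if you insist on a pointwise statement, stop at the bound $2\ep\,(|\nabla u|*\rho_\ep)(z)$ and note that the passage to $\|\nabla u\|_{L^1(B_{1-\delta})}$ requires integrating in $z$.
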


It is a standard fact that a function $u\in \cF(B_1)$ is in the Sobolev space 
$W^{1,p}_{loc}(B_1)$ for any $1\leq p< 2$.
Hence by Lemma \ref{lem-pfc-004} we can compare  with the following two convolutions as 
\begin{equation}
\label{pfc-015}
r(\d_r u)_{\ep} (z) = \dot{u}_{\ep, t} + O(\ep),
\end{equation}
on any relatively compact domain $\Omega\subset \subset B_1^*$.
Therefore, Proposition \ref{prop-pfc-001} implies that
 the regularization converges on $\Omega$ as 
$ \dot{u}_{\ep, t} \rightarrow \d_t u$
strongly in $L^p$-norm for any $p\geq 1$.
Thanks to the slicing theory again,
we can infer the following convergence results.
\begin{prop}[\cite{Li23}]
\label{prop-pfc-002}
For a function $u\in \cF(B_1)$, and any constants $1< A <B$, 
there exists a subsequence $\{ u_{\ep_k}\}$ of its standard regularization
satisfying 
$ I(u_{\ep_k, t}) \rightarrow I(u_t), $
as $k \rightarrow +\infty$ for almost all $t\in [-B, -A]$.
\end{prop}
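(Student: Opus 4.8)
The plan is to deduce the slicewise convergence over $\bC\bP^n$ from the strong $L^1$ convergence of $\dot u_{\ep,t}$ on a fixed compact annulus, and then to extract an almost-everywhere convergent subsequence in the $t$-variable by a Fubini/slicing argument. First I would fix the closed annular shell
$$ \Omega := \{ z\in \bC^{n+1} : e^{-B}\leq |z|\leq e^{-A} \}. $$
Since $A>1$ we have $e^{-A}<e^{-1}<1$, so $\Omega$ is disjoint from the boundary sphere, and since $B<\infty$ it is disjoint from the origin; hence $\Omega\subset\subset B_1^*$. On such a relatively compact domain the strong convergence $\dot u_{\ep,t}\to \dot u_t$ in $L^1(\Omega)$ is already in hand via the comparison \eqref{pfc-015} and Proposition \ref{prop-pfc-001} (the $O(\ep)$ error contributes a term whose $L^1(\Omega)$-norm is $O(\ep)$ and vanishes). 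I would also record that the mollifier $\rho$ is radial, so that $u_\ep$ inherits the $S^1$-symmetry of $u$; consequently $\dot u_{\ep,t}=r\d_r u_\ep$ descends to a function on $\bC\bP^n$ for each fixed $t$, and $I(u_{\ep,t})$ is well-defined as written.

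Next I would slice $\Omega$ along the Hopf fibration. Writing $r=e^t$ and using the polar decomposition $d\lambda = r^{2n+1}\,dr\,d\sigma$ on $\bC^{n+1}$, together with the fact that the pushforward of $d\sigma$ under $p:S^{2n+1}\to\bC\bP^n$ is a constant multiple of $\omega_{FS}^n$, the $S^1$-invariance of the integrand turns $\|\dot u_{\ep,t}-\dot u_t\|_{L^1(\Omega)}$ into an iterated integral
$$ \int_{-B}^{-A} e^{(2n+2)t}\Big(\int_{\bC\bP^n}|\dot u_{\ep,t}-\dot u_t|\,\omega_{FS}^n\Big)\,dt $$
up to a fixed positive constant. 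On the compact interval $[-B,-A]$ the weight $e^{(2n+2)t}$ is bounded below by $e^{-(2n+2)B}>0$, so the strong $L^1(\Omega)$ convergence forces the function
$$ g_\ep(t):=\int_{\bC\bP^n}|\dot u_{\ep,t}-\dot u_t|\,\omega_{FS}^n $$
to converge to $0$ in $L^1([-B,-A])$.

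Finally, $L^1$ convergence $g_\ep\to 0$ on $[-B,-A]$ yields a subsequence $\{u_{\ep_k}\}$ along which $g_{\ep_k}(t)\to 0$ for almost every $t\in[-B,-A]$; that is, $\dot u_{\ep_k,t}\to\dot u_t$ in $L^1(\bC\bP^n)$ for almost all such $t$. For each such $t$ one then has
$$ |I(u_{\ep_k,t})-I(u_t)|\leq \int_{\bC\bP^n}|\dot u_{\ep_k,t}-\dot u_t|\,\omega_{FS}^n = g_{\ep_k}(t)\longrightarrow 0, $$
which is exactly the assertion. The main point requiring care is the passage from the solid $L^1(\Omega)$ estimate to the slicewise statement on $\bC\bP^n$: one must justify the Fubini decomposition above for the merely $L^\infty_{loc}$ (rather than continuous) density $\dot u_t$, which is precisely where the slicing theory and the $S^1$-invariance of both $u$ and $u_\ep$ are essential, and one must accept that the pointwise-in-$t$ convergence is only obtained after passing to a subsequence.
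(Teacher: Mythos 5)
Your proposal is correct and follows essentially the same route the paper indicates: strong $L^1$ convergence of $\dot u_{\ep,t}$ on the compact annulus $e^{-B}\leq |z|\leq e^{-A}$ (via Lemma \ref{lem-pfc-004}, equation \eqref{pfc-015} and Proposition \ref{prop-pfc-001}), followed by the Fubini/slicing decomposition along the Hopf fibration and extraction of a subsequence converging for almost every $t$. The observations you flag as needing care (radial mollifier preserving $S^1$-invariance, and the slicing of the merely $L^\infty_{loc}$ density) are exactly the points the paper delegates to the slicing theory, so nothing is missing.
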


We end up this section with another application of Proposition \ref{prop-pfc-001}
and Lemma \ref{lem-pfc-004}, 
and it will provide the a priori estimate on the maximal directional Lelong numbers of the regularization. 

\begin{lemma}[\cite{Li23}]
\label{lem-pfc-005}
Fixing any two constants $1< A < B$, 
there exists a uniform constant $C>0$ such that we have 
\begin{equation}
\label{pfc-016}
M_B(u_{\ep}) \leq 2 M_A(u) + C\ep,
\end{equation}
for all $\ep <  \ep_0:= \frac{1}{2} \min\{ (e^{-A} - e^{-B}), e^{-B} \}. $
\end{lemma}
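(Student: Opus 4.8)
The plan is to estimate the radial slope $\dot{u}_{\ep,t}$ of the regularization directly on the sphere $\{|z|=e^{-B}\}$ (so $t=-B$, $r=e^{-B}=|z|$) and to transfer the estimate, via the Friedrichs inequality of Lemma \ref{lem-pfc-004}, to a convolution of the genuine radial derivative $\d_r u$, which can then be dominated by $M_A(u)$. Since $u_\ep \in \cF^{\infty}(B_{1-\delta})$ for $\ep$ small, its $t$-derivative is classical, and for such a point $z$ Lemma \ref{lem-pfc-004} gives
$$\dot{u}_{\ep,t}(z) = r\d_r(u*\rho_\ep)(z) \leq r(\d_r u * \rho_\ep)(z) + 2\ep\,\|\nabla u\|_{L^1(B_{1-\delta})},$$
so it remains only to bound the convolution term by $2M_A(u)$.

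For this I would write the convolution explicitly: with $y = z - \ep w$ running over $|w|\le 1$ and using $\d_r u(y) = |y|^{-1}(r\d_r u)(y) = |y|^{-1}\dot{u}_{t_y}(\z_y)$, where $t_y = \log|y|$ and $\z_y \in \bC\bP^n$ is the direction of $y$, one obtains
$$r(\d_r u * \rho_\ep)(z) = \int_{|w|\le 1} \frac{|z|}{|y|}\,\dot{u}_{t_y}(\z_y)\,\rho(w)\,d\lambda(w).$$
The two halves of the threshold $\ep_0$ control the two factors in the integrand. Because $\ep < \tfrac12(e^{-A}-e^{-B})$, every $y$ in the support satisfies $|y| \le e^{-B}+\ep \le e^{-A}$, i.e. $t_y \le -A$; since the slope $\d_t^+ u_t(\z_y)$ is non-decreasing in $t$ by log-convexity and $M_A(u)$ is the supremum over directions at $t=-A$, this yields $0 \le \dot{u}_{t_y}(\z_y) \le M_A(u)$ (the lower bound is \eqref{pfc-010}). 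Because $\ep < \tfrac12 e^{-B}$, one also has $|y| \ge e^{-B}-\ep > \tfrac12 e^{-B}$, hence $|z|/|y| < 2$. Combining these bounds with $\int \rho\,d\lambda = 1$ gives $r(\d_r u * \rho_\ep)(z) \le 2M_A(u)$.

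Taking the supremum over all $z$ on $\{|z|=e^{-B}\}$, equivalently over $\z\in\bC\bP^n$, then produces $M_B(u_\ep) \le 2M_A(u) + C\ep$ with $C = 2\|\nabla u\|_{L^1(B_{1-\delta})}$, which is finite since $u\in W^{1,p}_{loc}(B_1)$ and uniform in $\ep$, as required. The main subtlety, and the reason both the factor $2$ and the gap between $A$ and $B$ are forced, is that the mollifier mixes values of $\d_r u$ over the whole annulus of radii $[e^{-B}-\ep,\,e^{-B}+\ep]$ and over many directions simultaneously: one cannot compare with $M_B(u)$ directly, but must absorb the outward spread of the support into the larger quantity $M_A(u)$ and the radial rescaling $|z|/|y|$ into the factor $2$. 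Checking that $\ep<\ep_0$ makes both requirements hold exactly is the one place where the stated threshold is genuinely used.
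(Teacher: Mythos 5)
Your proof is correct and follows exactly the route the paper intends: the lemma is presented as an application of Lemma \ref{lem-pfc-004} together with the slope monotonicity and finiteness underlying Proposition \ref{prop-pfc-001}, and your argument (Friedrichs commutator estimate, then bounding the convolution of $\d_r u$ by $M_A(u)$ via $t_y\leq -A$ and by the ratio $|z|/|y|<2$) is precisely that application, with the threshold $\ep_0$ used where it is genuinely needed.
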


\section{The K\"{a}hler cone structure}
\smallskip

In this section, we are going to use a different point 
of view to look at the complex hessian of a function $u\in \cF^{\infty}(B_1)$.
In fact, there is a natural K\"ahler cone structure on the space 
$(\bC^{n+1})^* \cong (\bR^{2n+2})^*$, 
that induces the standard Sasakian structure on the unit sphere $S^{2n+1}$.
In the following, we will decompose the usual complex structure on $\bC^{n+1}$
with respect to this K\"ahler cone structure.

Let $\bR^{2n+2}$ be the $(2n+2)$-dimensional Euclidean space 
with rectangular coordinates 
$$ (x^0,\cdots, x^{n}, y^0, \cdots, y^{n}), $$
and we briefly write it as $(x^A, y^A)$ for $A = 0, 1, \cdots, n$. 
Then the $(2n+1)$-dimensional hypersphere $S_r$ with radius $r$ is defined by
$$\sum_A  \left\{ (x^A)^2 + ( y^A )^2 \right\} = r^2, $$ 
and denote $S^{2n+1}$ by the unit sphere with $r=1$. 
Put 
$$ z^A: = x^A + i y^A, $$
and then $z^A$'s define a complex structure in $\bR^{2n+2}$. 
Its standard almost complex structure is given by the $(1,1)$-tensor field
$$ I: = \sum_A \left(  \frac{\d}{\d {y^A}} \otimes  dx^A - \frac{\d}{\d{x^A}} \otimes dy^A    \right) = \sum_A i \left(  \frac{\d}{\d z^A} \otimes dz^A -   \frac{\d}{\d \bar{z}^A} \otimes d\bar{z}^A     \right). $$

Denote $g$ by the flat metric on $\bR^{2n +2}$, 
and then its associated K\"ahler form on $\bC^{n+1}$ is defined as 
$$ \omega_e: = \frac{i}{2} \ddbar r^2 = \frac{i}{2} \sum_A  dz^A \wedge d\bar{z}^A.$$

\subsection{Sasakian manifolds}
To talk about
a \emph{Sasakian structure} on $S^{2n+1}$, 
it is equivalent to describe a \emph{K\"ahler cone structure}
on the product space $(\bR^{2n+2})^*  \cong S^{2n+1} \times \bR_+ $, see Chapter 6, \cite{BG08}.
First we note that the flat metric $g$ splits as a metric cone as 
$$ g = dr^2 + r^2 g_0, $$
for any radius $r >0$. 
Here $g_0$ is the canonical metric on $S^{2n+1}$ with constant sectional curvature $1$,
and it is the restriction of $g$ to the sphere. 
Denote $\eta_0$ by the contact $1$-form: 
\begin{eqnarray}
\label{kc-001}
 \eta_0: &=&  I (r^{-1} dr)
\nonumber\\
&=& \frac{1}{r^2} \sum_A \left( y^A dx^A - x^A dy^A \right)
\nonumber\\
&=& -\frac{i}{2r^2} \sum_A \left( z^A d\bar{z}^A - \bar{z}^A dz^A \right).
\end{eqnarray}
Then it is clear that we have 
$$ \omega_e = - d(r^2 \eta_0)/2. $$ 
Based on this K\"ahler structure on the metric cone, 
we say that the quadruple 
\begin{equation}
\label{kc-0011}
\left( S^{2n+1} \times \bR_+ , dr^2 + r^2 g_0, -d(r^2 \eta_0)/2, I \right)
\end{equation}
defines a K\"ahler cone structure on the manifold $(\bR^{2n+2})^*$.   
Moreover, 
there is another important ingredient,
the \emph{Reeb vector field} $\xi_0$, that is defined as 
\begin{eqnarray}
\label{kc-002}
 \xi_0: &=& -I (r \d_r)
\nonumber\\
&=& \sum_A \left( y^A \frac{\d}{\d x^A} - x^A \frac{\d}{\d y^A} \right)
\nonumber\\
&=& (-i)  \sum_A \left( z^A \frac{\d}{\d z^A} - \bar{z}^A \frac{\d}{\d \bar{z}^A}\right).
\end{eqnarray}
It is a holomorphic Killing field on $(\bR^{2n+2})^*$, 
and $\eta_0$ is its dual $1$-form. 
Moreover, the metric $g$ has homothetic degree two, and 
the almost complex structure $I$ has homothetic degree zero in the following sense:    
$$ \mathcal{L}_{r\d_r} g = 2g; \ \ \ \ \mathcal{L}_{r\d_r} I =0. $$

Denote $(\eta_0, \xi_0)$ also by their restrictions to the manifold $S^{2n+1}$,
and then we have the following facts with the metric $g_0$: 
\begin{enumerate}
\item[(i)] 
$\eta_0$ is a contact $1$-form, and $\xi_0$ is a Killing vector field on $S^{2n+1}$; 

\item[(ii)]
$\eta_0 (\xi_0) = 1$ and  $\  \iota_{\xi_0}  d\eta_0 (\cdot) = d\eta_0 (\xi_0, \cdot) =0 $;

\item[(iii)]
the integral curves of $\xi_0$ are exactly the great circles on $S^{2n+1}$.

\end{enumerate}

It follows that the Reeb vector field $\xi_0$ defines a \emph{ regular foliation }
$\cF_{\xi_0}$
of $S^{2n+1}$ by the great circles,
and it is nothing but the Hopf-fiberation: 
\\

\begin{tikzcd}
\ \ \ \ \ \ \ \ \ \ \ \ \ \ \ \ \ \ \ \ \ \ \ S^1 \arrow[r, hook] & S^{2n+1} \arrow[r, "p"] & \bC\bP^n.
\end{tikzcd}
\\

Let  $L_{\xi_0}$ be the trivial line bundle generated by $\xi_0$,
and then we have a splitting of the tangent space of the sphere as 
$$ TS^{2n+1} = L_{\xi_0} \oplus \cD, $$
where the contact sub-bundle  $\cD: = \ker(\eta_0)$ is the kernel of the contact $1$-form. 
Moreover, it can be identified with the the normal bundle $\nu(\cF_{\xi_0})$
of the foliation via an isomorphism induced from the metric $g_0$.

Next we define an endomorphism of $TS^{2n+1}$
by restricting the almost complex structure $I$ to $\cD$, 
and extending it trivially to $L_{\xi_0}$. 
Explicitly, it is a $(1,1)$-tensor field as 
\begin{eqnarray}
\label{kc-003}
 \Phi_0 &=& \sum_{A, B} \left\{    (x^A x^B - \delta^{AB}) \frac{\d}{\d x^A} \otimes d y^B  + ( \delta^{AB} - y^A y^B) \frac{\d}{\d y^A} \otimes dx^B \right\} 
 \nonumber\\
 &+& \sum_{A, B} \left\{    y^A x^B \frac{\d}{\d y^A} \otimes d y^B  - x^A y^B \frac{\d}{\d x^A} \otimes dx^B \right\}. 
 \end{eqnarray}
Observe that $\eta_0 \circ \Phi_0 = 0$, 
and then we can infer the following equation 
\begin{equation}
\label{kc-005}
\Phi_0^2 = - \mathbb{I} + \xi_0\otimes \eta_0. 
\end{equation}
That is to say, 
the restriction $\Phi|_{\cD}$ defines 
an almost complex structure on $\cD$,
and it is compatible with the symplectic form $d\eta_0$
in the following sense: 
\begin{equation}
\begin{split}
&  d\eta_0 (\Phi_0 X, \Phi_0 Y) = d\eta_0 (X, Y) \ \text{for all $X, Y \in \Gamma(\cD)$}; \\
&  d\eta_0 (\Phi_0 X, X) >0 \ \text{for all $X \neq 0$}. 
\end{split}
\end{equation}
It follows that 
the pair $( \cD, \Phi_0|_{\cD} )$ 
defines an almost $CR$-structure,
and 
its Levi form $L_{\eta_0}: = d\eta_0 \circ (\Phi_0 \otimes \mathbb{I} )$
is strictly pseudo-convex. 
Then it 
induces a Riemannian metric on the distribution transversal to $\xi_0$, i.e. 
we define the transversal metric as 
\begin{equation}
\label{kc-0055}
g^T(X, \Phi_0Y): = d\eta_0(X, Y)\ \ \ \text{for all} \ X, Y\in \Gamma(\cD).
\end{equation}

The upshot is that the metric $g_0$ on the sphere 
is compatible with the \emph{almost contact structure} $(\xi_0, \eta_0, \Phi_0)$ 
in the following sense: 
\begin{equation}
\label{kc-006}
g_0 = g^T + \eta_0 \otimes \eta_0. 
\end{equation}
Together with the Killing condition of $\xi_0$, 
the quadruple 
$(\xi_0, \eta_0, \Phi_0, g_0)$ 
is called a \emph{Sasakian structure} on $S^{2n+1}$. 
This means that  
the almost $CR$-structure $(\cD, \Phi_0|_{\cD})$ is in fact integrable,
and $\Phi_0$ is invariant under $\xi_0$.
Then it follows a splitting
\begin{equation}
\label{kc-007}
 \cD\otimes \bC = \cD^{1,0} \oplus  \cD^{0,1} \ \ \  \text{with} \ \ \overline{\cD^{1,0} } =\cD^{0,1},
 \end{equation}
 where $\cD^{1,0}$ and $\cD^{0,1}$
 are eigenspaces of $\Phi_0$ with eigenvalues $i$ and $-i$, respectively. 
 In particular, the eigenspace with eigenvalue $0$ is exactly $L_{\xi_0}\otimes \bC$.

Furthermore, this splitting induces a complex structure 
$\bar J$ of the normal bundle via the isomorphism 
$ (\cD, \Phi_0|_{\cD}) \cong (\nu (\cF_{\xi_0}), \bar J)$,
and then it gives a transversal holomorphic structure on the foliation.


It is clear from the construction (equation \eqref{kc-0055} and \eqref{kc-006})
that the triple $(g^T, \omega^T, \bar J)$ defines a transversal K\"ahler structure 
on the local leaf space of the foliation, where the transversal K\"ahler metric is 
$$ \omega^T: = -d\eta_0. $$

Since the foliation is regular, 
 the push forward of this transversal K\"ahler structure 
 to the base manifold (via the fiber map of the Hopf-fiberation) 
is exactly the K\"ahler structure on $\bC\bP^n$ with the usual Fubini-Study metric. 
For further discussion, the reader is referred to \cite{SH62}, \cite{BG08}, \cite{HS16} and \cite{GKN00}.





\subsection{The complex Hopf-coordinate}

In order to illustrate the Sasakian structure on the sphere in an explicit way, 
we will invoke a particular local coordinate system on the cone 
$(\bR^{2n+2})^* \cong (\bC^{n+1})^*$. 
In fact, it is a generalization of the 
 complex Hopf-coordinate in $\bC^2$, see \cite{Li23}.

First consider the following holomorphic functions on the set $\{z^0 \neq 0 \}$:
$$ \z^{\a} : = \frac{z^{\a}}{z^0} = |\z^{\a}| e^{i\vp_{\a}} $$
for all $\a: = 1, \cdots, n$,
and $\vp_{\a}$ is an argument of $\z^{\a}$ as a complex number.
Then the complex Hopf-coordinate on $(\bR^{2n+2})^*$ is introduced as 
$$ (r, \theta, \z, \bar\z): = (r, \theta, \z^1, \cdots, \z^n, \bar{\z}^1, \cdots, \bar{\z}^n), $$
for all $r\in \bR_+$, $\theta\in \bR$ and $\z^{\a}\in \bC$ with the following change of variables:

\begin{equation}
\label{hc-000}
z^0 = r e^{\frac{i}{2} \theta } \frac{\varrho(\z, \bar{\z})}{\left(1+ \sum_{\b} |\z^{\b}|^2 \right)^{1/2}}; \ \ \
z^{\a} = r e^{\frac{i}{2} \theta} \frac{\z^{\a} \cdot \varrho(\z, \bar{\z})}{\left(1+ \sum_{\b} |\z^{\b}|^2 \right)^{1/2}},
\end{equation}
where the factor $\varrho$ is defined as 
$$ \varrho(\z,\bar\z):=  \prod_{\a =1}^n \left(  \frac{\bar{\z}^{\a}}{|\z^\a|}\right)^{\frac{1}{2}}. $$
This factor is introduced for the purpose to 
gain more symmetry while taking change of variables.
For example, it enables us to write 

\begin{equation}
\label{hc-001}
z^0 
=  \frac{r e^{\frac{i}{2}  \left( \theta - \sum_{\a} \vp_{\a}  \right)     }}{\left(1+ \sum_{\b} |\z^{\b}|^2 \right)^{1/2}};
\ \ \ \ 
z^{\a} 
=  \frac{r e^{\frac{i}{2}  \left( \theta + \vp_{\a} - \sum_{\b} \widehat{\vp_{\a} } \right)     }}{\left(1+ \sum_{\b} |\z^{\b}|^2 \right)^{1/2}},
\end{equation}
where 
the notation $\sum_{\gamma} \widehat{\vp_{\a}}$ means taking the summation of all $\vp_\gamma$'s without the angle $\vp_\a$. 

In fact, 
the complex Hopf-coordinate 
is induced from a local trivialization of $(\bC^{n+1})^*$
as a principle $\bC^*$-bundle over $\bC\bP^n$.
That is to say, there is a homeomorphism of the fiber map $p$ as 
$$ \Psi: \bC^* \times  U \rightarrow p^{-1}(U), $$
that sends 
$$ ( re^{i\theta/2},  \z^{1},\cdots, \z^n )  \rightarrow (z^0,\cdots, z^n) $$
via the map defined in equation \eqref{hc-000}. 
Here $U\subset \bC^n$ is an open set on which one branch of the factor $\varrho$ is well-defined,
and it can  be identified with an open subset in $\bC\bP^{n} - [0: z^1: \cdots: z^n]$.

With the aid of this complex Hopf-coordinate, 
we can compute the contact $1$-form  in the cone $(\bR^{2n+2})^*$.
However, it is convenient to introduce the following notations first:
\begin{equation}
\label{hc-0015}
J: = -I/2 \ \ \  \text{and} \ \ \  \eta: = J(r^{-1}dr) = -\eta_0/2.
\end{equation}
This is because of the normalization in the $d^c$-operator, since we have 
$$d^c = \frac{i}{2}(\dbar - \d)=  Jd.$$
Writing the complex variable $z^A$ in the polar coordinate as 
$z^A: = r^A e^{i \theta_A}$,
it follows from equation (\ref{hc-001}) 
\begin{equation}
\label{hc-002}
\theta_0 = \theta - \sum_{\a =1}^n \vp_{\a}; \ \ \ \  \theta_{\a} = \theta + \vp_{\a} - \sum_{\gamma=1}^n \widehat{\vp_{\a}}.
\end{equation}
Hence it follows 
\begin{equation}
\label{hc-003}
\eta = \frac{1}{2r^2} \sum_A \frac{ x^A dy^A - y^A dx^A}{(x^A)^2 + (y^A)^2} \cdot (r^A)^2 = \frac{1}{2r^2}\sum_A (r^A)^2 d\theta_A,
\end{equation}
and we have 
\begin{eqnarray}
\label{hc-004}
&&2 \sum_A (r^A)^2 d\theta_A 
\nonumber\\
&=& \frac{r^2}{ 1+ \sum_{\b} |\z^{\b}|^2}\left\{ d\theta - \sum_{\a} d\vp_{\a}
+ \sum_{\a} |\z^{\a}|^2 \left( d\theta + d\vp_{\a} - \sum_{\gamma} \widehat{d\vp_{\a}} \right)         \right\}
\nonumber\\
&=& r^2 \left\{   d\theta - \sum_{\a} \left( 1 - \frac{2|\z^{\a}|^2}{1+ \sum_{\b} |\z^{\b}|^2} \right) d\vp_{\a} \right\}.
\nonumber\\
\end{eqnarray}
Observe that we can write 
$$ d\vp_{\a} = \Im\left( \frac{d\z^{\a}}{\z^{\a}}\right). $$
Therefore, if we put 
$$\cos\k_{\a}: =  1 - \frac{2|\z^{\a}|^2}{1+ \sum_{\b} |\z^{\b}|^2}\in [-1,1],$$
then the formula in equation \eqref{hc-004}
can be reduced to the following form, cf. Lemma 4.1, \cite{Li23}. 

\begin{lemma}
\label{lem-hc-001}
The normalized contact $1$-form can be written under the complex Hopf-coordinate as 
\begin{equation}
\label{hc-005}
\eta = \frac{1}{4} \left\{ d\theta - \sum_{\a} \cos\k_{\a} \cdot \Im\left( \frac{d\z^{\a}}{\z^{\a}}\right) \right\}. 
\end{equation}
\end{lemma}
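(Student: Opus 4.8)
The plan is to evaluate $\eta$ in the complex Hopf-coordinate by feeding the change of variables \eqref{hc-001} into the polar expression \eqref{hc-003} and then simplifying. Writing $z^A = r^A e^{i\theta_A}$, equation \eqref{hc-003} already presents $\eta$ as the weighted angular sum $\frac{1}{2r^2}\sum_A (r^A)^2\, d\theta_A$. Hence, once the simplification \eqref{hc-004} is established, the lemma is immediate: \eqref{hc-003} reads $\eta = \frac{1}{4r^2}\bigl(2\sum_A (r^A)^2\, d\theta_A\bigr)$, so the prefactor $r^2$ on the right of \eqref{hc-004} cancels all but $\frac14$, and one is left only to substitute $d\vp_\a = \Im(d\z^\a/\z^\a)$. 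The genuine content is therefore the identity \eqref{hc-004}, and that is what I would establish.

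To do so, I would first read off the moduli and arguments from \eqref{hc-001}. Since each factor of $\varrho$ is unimodular, the moduli are $(r^0)^2 = r^2/(1+\sum_\b|\z^\b|^2)$ and $(r^\a)^2 = r^2|\z^\a|^2/(1+\sum_\b|\z^\b|^2)$, so that $\sum_A (r^A)^2 = r^2$ as a sanity check. The arguments are the half-angle combinations appearing in \eqref{hc-001}, so that $d\theta_0$ and $d\theta_\a$ are the exterior derivatives of the relations recorded in \eqref{hc-002}; the overall factor $\frac12$ carried by these half-angles is precisely what balances the coefficient $2$ on the left of \eqref{hc-004}.

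The heart of the matter is the regrouping of the resulting expression. After substitution the coefficient of $d\theta$ assembles to $1 + \sum_\a |\z^\a|^2$, cancelling the common denominator $1+\sum_\b|\z^\b|^2$. The delicate term is the double sum produced by $\sum_\a |\z^\a|^2 \sum_\gamma \widehat{d\vp_\a}$: I would interchange the order of summation and collect the total coefficient of each fixed $d\vp_\a$, which then collapses to $(1+\sum_\b|\z^\b|^2) - 2|\z^\a|^2$. By the definition $\cos\k_\a = 1 - 2|\z^\a|^2/(1+\sum_\b|\z^\b|^2)$ this equals $(1+\sum_\b|\z^\b|^2)\cos\k_\a$, so the denominator cancels a second time and the bracket reduces to $d\theta - \sum_\a \cos\k_\a\, d\vp_\a$. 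I expect this combinatorial rearrangement, together with the careful tracking of the two factors of $\frac12$ (one from the $e^{\frac{i}{2}\theta}$ normalization, one from the phase of $\varrho$), to be the only real obstacle; every other step is direct substitution.

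Finally, I would substitute $d\vp_\a = \Im(d\z^\a/\z^\a)$, which holds because $d\log \z^\a = d\log|\z^\a| + i\, d\vp_\a$, and assemble the numerical constant $\frac14$. This yields $\eta = \frac14\{ d\theta - \sum_\a \cos\k_\a\, \Im(d\z^\a/\z^\a)\}$, which is exactly the assertion of the lemma.
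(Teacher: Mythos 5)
Your proposal is correct and follows essentially the same route as the paper: it reads off the moduli and half-angle arguments from \eqref{hc-001}, substitutes into \eqref{hc-003}, and performs exactly the regrouping recorded in \eqref{hc-004} before invoking $d\vp_\a=\Im(d\z^\a/\z^\a)$ and the definition of $\cos\k_\a$. The interchange of summation collapsing the coefficient of each $d\vp_\a$ to $(1+\sum_\b|\z^\b|^2)-2|\z^\a|^2$ is precisely the computation the paper carries out implicitly in \eqref{hc-004}.
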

Moreover, one can also directly check the following useful identities:
\begin{equation}
\label{hc-0055}
 \frac{\d\bar z^{0}}{\d\z^{\b}} = \frac{\bar z^{0}}{4\z^{\b}} \cos\k_{\b};\ \ \ \  \frac{\d\bar z^{\a}}{\d\z^{\b}} = \frac{\bar z^{\a}}{4\z^{\b}} \cos\k_{\b},
\end{equation}
for each $\a, \b= 1,\cdots, n$.
Next we can introduce a \emph{local basic function} $h$
with respect to the complex Hopf-coordinate as 
$$h(\z, \bar\z): = \log\left( 1+ \sum_{\a} |\z^{\a}|^2 \right) - \sum_{\a} \log |\z^{\a}|, $$
and then obtain a \emph{local defining equation} of  the contact $1$-form as 
\begin{equation}
\label{hc-006}
\eta = \frac{1}{4} \left\{   d\theta - i \left( \d_{\z} h - \dbar_{\z} h  \right) \right\},
\end{equation}
where the operators are defined by 
$$ \d_{\z}h : = \sum_{\a} \frac{\d h }{ \d\z^{\a} }d\z^{\a};\ \ \ \  \dbar_{\z}h : = \sum_{\a} \frac{\d h }{ \d\bar\z^{\a} }d\bar\z^{\a}. $$
Finally it follows 
\begin{eqnarray}
\label{hc-007}
d \eta &=& \frac{1}{2} dd^c_{\z} h
\nonumber\\
&=& \frac{1}{2} dd^c_{\z} \log\left( 1+ \sum_{\a} |\z^{\a}|^2 \right) 
\nonumber\\
&=& \omega_{FS}. 
\end{eqnarray}
As we have expected, 
the transversal K\"ahler metric $d\eta$ is exactly the Fubini-Study metric $\omega_{FS}$ 
on the quotient space $\bC\bP^n$, and we have its total volume 
$$\int_{\bC\bP^n} \omega^n_{FS} = \pi^n. $$

Although everything is computed under the complex Hopf-coordinate, 
we emphasis that the local defining equation of $\eta$ (equation (\ref{hc-006})), and hence equation (\ref{hc-007}),
actually follow from the Sasakian structure.
This means that they are in fact independent of the chosen 
holomorphic coordinate on $\bC\bP^n$ (induced from a local trivialization), 
possibly with a different basic function $h$ and a re-normalized angle $\theta$, see \cite{GKN00}, \cite{GZ12}.

\begin{rem}
\label{rem-hc-001}
There is another local trivialization of the fiber map $p$ as
$$ \Psi':  \bC^*\times \bC^n \rightarrow p^{-1}(\bC^n),$$
that sends $(r, \theta', \z, \bar\z)$ to
$$ z^0: = \frac{re^{i\theta'}}{\left( 1+ \sum_{\b}|\z^{\b}|^2 \right)^{1/2} }; \ \ \  
z^{\a} = \frac{re^{i\theta'} \cdot \z^\a}{\left( 1+ \sum_{\b}|\z^{\b}|^2 \right)^{1/2} }. $$
Thus we have a different holomorphic coordinate 
 $\z\in\bC\bP^n$ that is induced from the trivialization $\Psi'$.
Moreover, the local basic function $h'$ corresponding to $\Psi'$ is defined as 
$$ h'(\z,\bar\z): = \log\left( 1+ \sum_{\a} |\z^{\a}|^2 \right). $$

For an $S^1$-invariant function, 
these two trivializations make no difference on its value 
since they only differ by an angle factor, see Remark 3.1, \cite{Li23}. 
Therefore, we also refer $\Psi'$ as a complex Hopf-coordinate.

\end{rem}

\subsection{The complex structure splits}
\label{sec-cs}
As we have seen in equation \eqref{kc-007},
the complex structure on the cone $(\bR^{2n+2})^*$
splits with respect to the Sasakian structure on the sphere. 
Moreover, this splitting can be explicitly written down 
locally through the complex Hopf-coordinate as follows. 

It turns out that 
it is easier to first describe it on the cotangent bundle, 
namely, 
we consider  the $(1,0)$-part of the complexified cotangent bundle of the cone as 
$$(T^*)^{1,0}(\bR^{2n+2})^* \subset T^{*}(\bR^{2n+2})^* \otimes \bC. $$
Then there is a well-defined $1$-form on the cone as 
\begin{equation}
\label{cs-000}
\lambda^0:= dr - i r\eta_0; \ \ \ \  \bar{\lambda}^0:= dr + i  r\eta_0,
\end{equation}
and it satisfies
$$ I(\lambda^0) = i \lambda^0; \ \ \ \  I(\bar{\lambda}^0) = (-i)\bar{\lambda}^0. $$
Locally we introduce the following
$1$-forms 
under the complex Hopf-coordinate: 
\begin{equation}
\label{cs-001}
\lambda^{\a}:= d\z^{\a}; \ \ \ \ \bar{\lambda}^{\a}: = d\bar\z^{\a},
\end{equation}
for all $\a=1,\cdots, n$. 
Since $\z^{\a}$'s are holomorphic functions,
it follows 
$$ I(\lambda^{\a}) = i \lambda^{\a}; \ \ \ \ I(\bar{\lambda}^{\a}) = (-i) \bar{\lambda}^{\a}.$$
Then it is clear that the $n$-tuples 
$ \{ \lambda_1,\cdots, \lambda_n \} $
is a local coframe field of the bundle $(T^*)^{1,0}(\bC\bP^n)$ over the base manifold. 
Thanks to the local defining equation of $\eta$ (equation \eqref{hc-006}), 
we can further infer that the $(n+1)$-tuples 
$$ \{\lambda_0, \lambda_1,\cdots, \lambda_n \} $$
builds a local coframe field of the bundle $(T^*)^{1,0}(\bR^{2n+2})^* $ over the cone,
and its complex conjugate is a local coframe of the bundle  $(T^*)^{0,1}(\bR^{2n+2})^* $.
Clearly, these coframes give a decomposition of the complex structure 
with respect to the K\"ahler cone structure. 
Moreover, this construction is due to the Sasakian structure, 
i.e. it is independent of the chosen holomorphic coordinate on $\bC\bP^n$,
because the local defining equation of $\eta$ is.

On the other hand, we can also describe  the splitting in the dual space:
\begin{equation}
\label{cs-0019}
T(\bR^{2n+2})^* \otimes \bC = \left( L_{e_0} \oplus L_{\bar{e}_0}\right) \oplus \cD^{1,0} \oplus \cD^{0,1},
\end{equation}
where the vector fields $e_0$ and $\bar{e}_0$ are defined as 
$$ e_0: = \frac{1}{2}\left\{ \d_r + i r^{-1}\xi_0 \right\};\ \ \ \ \bar{e}_0: =  \frac{1}{2}\left\{ \d_r - i r^{-1}\xi_0 \right\}. $$
In fact,  the Reeb vector field can written as 
$$\xi_0 = -I(r\d_r) = -2 \frac{\d}{\d\theta}, $$
and then it is clear that we have 
$$ I(e_0) = i e_0;\ \ \ \ I(\bar{e}_0) = (-i)\bar{e}_0. $$
Moreover, we define the following vector fields 
under the complex Hopf-coordinate:
$$ e_{\a}:= \d_{\z^{\a}} + \frac{i}{4\z^{\a}} \cos\k_{\a}\  \xi_0;\ \ \ \    \bar{e}_{\a}:= \d_{\bar\z^{\a}} - \frac{i}{4\bar\z^{\a}} \cos\k_{\a}\ \xi_0.   $$
Then one can check 
$$ \lambda^A(e_B) = \delta^A_B,$$
for all $A,B= 0,\cdots, n$. 
In particular,  the $e_{\a}$'s, and hence $\bar{e}_{\a}$'s,  belong to
the complexified contact   sub-bundle $\cD\otimes\bC$. 
Moreover, it follows from equation (\ref{hc-0055}) that we have 
\begin{equation}
\label{cs-002}
I(e_{\a}) = i e_{\a}; \ \ \ \  I(\bar{e}_{\a}) = (-i) \bar{e}_{\a}.
\end{equation}
It follows that the almost complex structure $\Phi_0$ can be written locally as 
$$ \Phi_0 = i \sum_{\a} e_{\a}\otimes d\z^{\a} - i \sum_{\a} \bar{e}_{\a}\otimes d\bar\z^{\a}, $$
and then we have 
\begin{equation}
\label{cs-003}
\Phi_0(e_{\a}) = i e_{\a}; \ \ \ \  \Phi_0(\bar{e}_{\a}) = (-i) \bar{e}_{\a}. 
\end{equation}

Therefore, the $n$-tuples $\{e_1,\cdots, e_n\}$
is a local frame of the bundle $\cD^{1,0}$.
It follows that the $(n+1)$-tuples $\{e_0, e_1,\cdots, e_n\}$
builds a local frame of the bundle $T^{1,0}(\bR^{2n+2})^*$.
Together with its complex conjugate, they describe the splitting (equation \eqref{cs-0019})
of the complex structure on the tangent space of the cone.

\bigskip

\section{The decomposition formula}
\smallskip

In this section, we will compute the 
following integral, for a function $u\in \cF^{\infty}(B_1)$:
$$\int_{S_r} d^c u \wedge (dd^c u)^n, $$
on the hypersphere $S_r\subset \bR^{2n+2}$ with radius $r\in(0,1)$. 
Consider the space $(\bR^{2n+2})^* \cong (\bC^{n+1})^*$
as a K\"ahler cone over the Sasakian manifold $S^{2n+1}$, 
and denote $(\xi_0, \eta_0, \Phi_0, g_0)$ by its standard Sasakian structure. 
We recall the concept of basic forms. 

\begin{defn}
\label{def-df-001}
A $k$-form $\vartheta$ on  $S^{2n+1}$ is called basic if it satisfies 
$$ \iota_{\xi_0} \vartheta =0;\ \ \ \   \mathcal{L}_{\xi_0}\vartheta =0.  $$
\end{defn}

The exterior differential preserves basic forms. 
There is a natural splitting of the complexification of the bundle of the basic $k$-forms ${\bigwedge}_B^{k}(S^{2n+1})$ as 
$$ {\bigwedge}^k_B    (S^{2n+1}) \otimes \bC = \bigoplus_{i+j =k}  {\bigwedge}_B^{i, j} (S^{2n+1}),$$
where $ {\bigwedge}_B^{i, j} (S^{2n+1})$  
denotes the bundle of basic forms of type $(i, j)$. 
Then one can define the operators $\d_B$ and $\dbar_B$. 
Put 
$$d_B: = d\ |_{{\bigwedge}^k_B }; \ \ \ \ d^c_B:= \frac{i}{2} (\dbar_B - \d_B), $$ 
and it follows as usual 
$$ d_B = \d_B + \dbar_B; \ \ \  d_B d^c_B = i\d_B \dbar_B; \ \ \  (d_B)^2 = (d^c_B)^2=0. $$
In other words, 
the operators $\d_B$ and $\dbar_B$ are exactly (the pull back of) 
the $\d$ and $\dbar$ operators on $\bC\bP^n$
under the isomorphism with the transversal holomorphic structures: $ (\nu(\cF_{\xi_0}), \bar J)\cong (\cD, \Phi_0|_{\cD})$.

\subsection{Computations}
It is clear that $u$ is basic, 
if it is in the family $\cF^{\infty}(B_1)$. 
Then we can decompose its exterior derivative as 
$$du= (u_{r}) dr + d_B u, $$
and hence it follows  
\begin{eqnarray}
\label{df-004}
d^c u 
&=&  (u_r) J dr + J d_B u
\nonumber\\
&=& (r u_r) \eta + d^c_B u,
\end{eqnarray} 
where $\eta = -\eta_0/2$ is the normalized contact $1$-form. We compute
\begin{eqnarray}
\label{df-005}
dd^c u 
&=&   d \left\{ (r u_r) \eta + d^c_B u \right\} 
\nonumber\\
&=& (r  u_r) d\eta  + (ru_r)_{,r} dr\wedge \eta + d_B(ru_r) \wedge \eta + dr\wedge \d_r(d^c_B u)+ d_B d^c_B u
\nonumber\\
&=&  \Theta_1 + \Theta_2,
\end{eqnarray} 
where the $2$-forms are defined as 
\begin{align}
\label{df-006}
\Theta_1: = (ru_r)_{,r} dr\wedge\eta + dr\wedge \d_r (d^c_B u) + d_B(ru_r)\wedge \eta;
\\
\Theta_2:= (ru_r) d\eta + d_B d^c_B u.
\end{align} 
Then we first come up with 
the following decomposition formula of the complex hessian of $u$ restricted to the sphere.
\begin{lemma}
\label{lem-df-001}
For a function $u\in \cF^{\infty}(B_1)$, we have
\begin{eqnarray}
\label{df-007}
dd^c u|_{S_r} &=& (ru_r) d\eta + d_B (ru_r) \wedge \eta + d_B d^c_B u
\nonumber\\
&=& \Theta_2 +  d_B (ru_r) \wedge \eta,
\end{eqnarray}
on each hypersphere $S_r$ with $r\in (0,1)$. 
\end{lemma}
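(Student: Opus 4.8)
The plan is to take the decomposition $dd^c u = \Theta_1 + \Theta_2$ already established in equation \eqref{df-005} and simply pull it back to the hypersphere. Write $\iota\colon S_r \hookrightarrow (\bR^{2n+2})^*$ for the inclusion; then $dd^c u|_{S_r}$ is by definition the pullback $\iota^*(dd^c u)$, so the entire statement reduces to deciding which summands of $\Theta_1 + \Theta_2$ survive $\iota^*$. The one fact that drives everything is that the radius $r$ is constant along $S_r$, so that $\iota^* dr = 0$; consequently any summand carrying an explicit factor of $dr$ is annihilated, while every summand built purely from basic (radially horizontal) data is unaffected.

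Before reading off the conclusion I would record that the forms $\eta$, $d\eta$, $d_B(ru_r)$ and $d_B d^c_B u$ contain no $dr$-component. For $\eta = -\eta_0/2$ this follows from $\eta_0(\d_r)=0$, which is immediate from \eqref{kc-001} once one contracts against the Euler field $r\d_r = \sum_A(x^A\d_{x^A}+y^A\d_{y^A})$; equivalently, in the local expression \eqref{hc-005} all coefficients $\cos\k_\a$ and all basis forms $d\theta$, $\Im(d\z^\a/\z^\a)$ are $r$-independent. That same $r$-independence yields $d\eta = \omega_{FS}$ with no radial part, cf. \eqref{hc-007}, and $d_B$ is by construction the transversal part of the exterior derivative, so $d_B(ru_r)$ and $d_B d^c_B u$ are $dr$-free by definition. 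With this in hand the computation is one line: in $\Theta_1$ the first two summands $(ru_r)_{,r}\,dr\wedge\eta$ and $dr\wedge\d_r(d^c_B u)$ each carry an explicit $dr$ and are killed by $\iota^*$, whereas the third summand $d_B(ru_r)\wedge\eta$ survives, and every term of $\Theta_2 = (ru_r)d\eta + d_B d^c_B u$ survives as well. Hence $\iota^*(dd^c u) = (ru_r)d\eta + d_B(ru_r)\wedge\eta + d_B d^c_B u = \Theta_2 + d_B(ru_r)\wedge\eta$, which is exactly \eqref{df-007}.

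The only genuine content, and the step I expect to require the most care, is the verification that the surviving basic forms are honestly free of $dr$. This is not a purely formal triviality but a reflection of the K\"ahler cone structure, in which the radial direction $\d_r$ splits off cleanly from the contact distribution and $\eta$, $d\eta$ descend from the transversal Fubini--Study geometry on $\bC\bP^n$. Once that splitting is granted, the lemma is an immediate pullback argument.
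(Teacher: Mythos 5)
Your proof is correct and follows the same route as the paper, which obtains the lemma immediately from the decomposition $dd^c u=\Theta_1+\Theta_2$ in \eqref{df-005}--\eqref{df-006} by restricting to $S_r$ and discarding the two summands of $\Theta_1$ carrying an explicit $dr$. Your additional verification that $\eta$, $d\eta$, $d_B(ru_r)$ and $d_Bd^c_Bu$ are $dr$-free is a worthwhile detail the paper leaves implicit, but it is not a different argument.
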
 

Next we can compute the $(n, n)$-form on the hypersphere as 
\begin{eqnarray}
\label{df-008}
( dd^c u|_{S_r} )^n 
&=& \left(  \Theta_2 +  d_B (ru_r) \wedge \eta   \right)^n
\nonumber\\
&=& \Theta_2^n + n \Theta_2^{n-1} \wedge d_B (ru_r) \wedge \eta.
\end{eqnarray}
  
Since the two terms in $\Theta_2$ are both of type $(1,1)$ in the transversal K\"ahler structure, 
we  expand its $n$th power as 
\begin{eqnarray}
\label{df-009}
\Theta_2^n &=&  (ru_r)^n (d\eta)^n  +  (d_B d^c_B u)^n
\nonumber\\
&+&  \sum_{k=1}^{n-1} \left( \begin{array}{c} n \\ k \end{array} \right) (ru_r)^{n-k} (d\eta)^{n-k} \wedge (d_B d^c_{B} u)^k.
\end{eqnarray}
Observe that we have for each $k=0,\cdots, n$
\begin{equation}
\label{df-010}
d^c_B u \wedge (d\eta)^{n-k} \wedge (d_B d^c_B u)^k =0.
\end{equation}
It follows 
\begin{eqnarray}
\label{df-011}
&&d^c u\wedge (dd^c u|_{S_r})^{n}
\nonumber\\
&=& \left\{  (ru_r) \eta + d^c_B u   \right\} \wedge \left\{ \Theta_2^n + n \Theta_2^{n-1} \wedge d_B (ru_r) \wedge \eta \right\}
\nonumber\\
&=& (ru_r) \eta\wedge \Theta_2^n + n \ d^c_B u \wedge \Theta_2^{n-1}  \wedge d_B (ru_r) \wedge \eta.
\end{eqnarray}
The first term on the R.H.S. of \eqref{df-011} can be computed as
\begin{eqnarray}
\label{df-012}
(ru_r) \eta \wedge \Theta_2^n &=&  (ru_r)^{n+1} \eta\wedge (d\eta)^n  + (ru_r) \eta \wedge  (d_B d^c_B u)^n
\nonumber\\
&+&  \sum_{k=1}^{n-1} \left( \begin{array}{c} n \\ k \end{array} \right) (ru_r)^{n-k+1} \eta\wedge (d\eta)^{n-k} \wedge (d_B d^c_{B} u)^k,
\end{eqnarray}
and the second term is 
\begin{eqnarray}
\label{df-013}
&& (-n)  \ \eta \wedge d_B (ru_r) \wedge d^c_B u \wedge \Theta_2^{n-1} 
\nonumber\\
&=& (-n) \  \eta \wedge d_B(r u_r) \wedge d^c_B u \wedge \sum_{j=1}^{n} \left( \begin{array}{c} n-1 \\ j-1 \end{array} \right)  (ru_r)^{n-j} (d\eta)^{n-j} 
\wedge (d_B d^c_B u)^{j-1}
\nonumber\\
\end{eqnarray}
In conclusion, we have the following formula. 
\begin{lemma}
\label{lem-df-002}
For a function $u\in\cF^{\infty}(B_1)$, we have the expansion formula 
\begin{eqnarray}
\label{df-014}
&&d^c u \wedge (dd^c u|_{S_r})^n
\nonumber\\
&=& \eta \wedge \left\{ \ 
\sum_{k=0}^{n} \left( \begin{array}{c} n \\ k \end{array} \right) (ru_r)^{n-k+1} (d\eta)^{n-k} \wedge (d_B d^c_{B} u)^k \right.
\nonumber\\
&-&  \left. n \ 
 d_B(r u_r) \wedge d^c_B u \wedge \sum_{k=1}^{n} \left( \begin{array}{c} n-1 \\ k-1 \end{array} \right) 
  (ru_r)^{n-k} (d\eta)^{n-k}  \wedge (d_B d^c_B u)^{k -1} \right\},
\nonumber\\
\end{eqnarray}
\end{lemma}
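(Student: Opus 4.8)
The plan is to assemble \eqref{df-014} directly from the decomposition $d^c u = (ru_r)\eta + d^c_B u$ in \eqref{df-004} and the binomial expansion \eqref{df-008} of $(dd^c u|_{S_r})^n$, the only genuine input being the vanishing \eqref{df-010}. First I would substitute both expressions and multiply out, producing four terms. Two drop out at once: the term $n(ru_r)\,\eta\wedge\Theta_2^{n-1}\wedge d_B(ru_r)\wedge\eta$ vanishes because it contains $\eta\wedge\eta$, and the term $d^c_B u\wedge\Theta_2^n$ vanishes after expanding $\Theta_2^n$ binomially, since each summand has the form $d^c_B u\wedge(d\eta)^{n-k}\wedge(d_Bd^c_Bu)^k$ — a basic form of total degree $2n+1$ on the $2n$-dimensional leaf space, hence zero by \eqref{df-010}. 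This leaves precisely the two surviving terms recorded in \eqref{df-011}.

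Next I would evaluate those two terms separately. For the first, $(ru_r)\,\eta\wedge\Theta_2^n$, I expand $\Theta_2=(ru_r)d\eta+d_Bd^c_Bu$ by the binomial theorem; this is legitimate because both summands are basic $(1,1)$-forms and so commute under the wedge, and the outcome is the single sum $\eta\wedge\sum_{k=0}^n\binom{n}{k}(ru_r)^{n-k+1}(d\eta)^{n-k}\wedge(d_Bd^c_Bu)^k$ of \eqref{df-012}. For the second, $n\,d^c_B u\wedge\Theta_2^{n-1}\wedge d_B(ru_r)\wedge\eta$, I would first reorder the factors to pull $\eta$ to the front: moving $\eta$ past the intervening $2n$-form costs no sign, while the subsequent transposition of the two one-forms $d_B(ru_r)$ and $d^c_B u$ produces a single minus sign. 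Expanding the remaining $\Theta_2^{n-1}$ binomially then yields \eqref{df-013}.

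Finally I would add \eqref{df-012} and \eqref{df-013}, factor out the common $\eta$, and relabel the summation index in the second sum to recover the stated formula \eqref{df-014}. I expect no real obstacle here: the whole argument is exterior-algebra bookkeeping, and the conceptual content — the collapse of the cross terms — has already been isolated in the bidegree vanishing \eqref{df-010}. The only points demanding care are the sign generated by interchanging $d_B(ru_r)$ and $d^c_B u$, and the observation, used repeatedly, that every binomial expansion is valid precisely because $d\eta$ and $d_Bd^c_Bu$ are commuting basic $(1,1)$-forms.
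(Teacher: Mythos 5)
Your proposal is correct and follows essentially the same route as the paper: expand the product using \eqref{df-004} and \eqref{df-008}, kill the two cross terms via $\eta\wedge\eta=0$ and the vanishing \eqref{df-010} (which you correctly justify by the degree count on the $2n$-dimensional transversal space), and track the single sign arising from the transposition of the one-forms $d_B(ru_r)$ and $d^c_B u$. The sign bookkeeping you describe matches the paper's \eqref{df-013} exactly, so there is nothing to add.
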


Observe that all the terms in the bracket on the R.H.S. of equation (\ref{df-014})
has transversal degree $2n$. 
Then their wedge product with the contact $1$-form $\eta$
is equivalent to taking the wedge with $d\theta /4$.
Thanks to equation (\ref{hc-005}) and (\ref{hc-006}),
we can write down this $(2n+1)$-form under the complex Hopf-coordinate as 

\begin{eqnarray}
\label{df-015}
&&d^c u \wedge (dd^c u|_{S_r})^n
\nonumber\\
&=& \frac{1}{4} d\theta \wedge \left\{ \ 
\sum_{k=0}^{n} \left( \begin{array}{c} n \\ k \end{array} \right) (ru_r)^{n-k+1} \omega_{FS}^{n-k} \wedge ( d_B d^c_B u)^k \right.
\nonumber\\
&-&  \left. n \ 
  d_B (r u_r) \wedge  d^c_B u \wedge \sum_{k=1}^{n} \left( \begin{array}{c} n-1 \\ k-1 \end{array} \right) 
 (ru_r)^{n-k} \omega_{FS}^{n-k}  \wedge ( d_B d^c_B u)^{k -1} \right\}.
\nonumber\\
\end{eqnarray}
Here the operators $d_B$ and $d^c_B$ should be interpreted as 
the $d$ and $d^c$ operators on the complex projective space $\bC\bP^n$.

Recall that we have taken a change of variables $t: = \log r$,
and denote the function $u$ under this new variable as  
$$ u_t (\z): = \hat{u}(t, \z, \bar\z) = u(e^t, \z, \bar\z), $$
and then it follows 
$$ \dot{u}_t (\z):= \d_t \hat{u}(t, \z, \bar\z) = r\d_r u(r,\z,\bar\z). $$
Finally, we obtain the following decomposition formula. 

\begin{theorem}
\label{thm-df-001}
For a function $u\in\cF^{\infty}(B_1)$, we have 
\begin{eqnarray}
\label{df-016}
\frac{1}{\pi} \int_{S_r} d^c u \wedge (dd^c u)^n 
= \sum_{k=0}^n   \begin{pmatrix} n+1 \\ k\end{pmatrix} 
\int_{\bC\bP^n} 
(\dot{u}_t)^{n+1-k} \omega_{FS}^{n-k}\wedge (d_B d^c_B u)^k,
\nonumber\\
\end{eqnarray}
on each hypersphere $S_r$ with radius $ r= e^t\in (0,1)$. 
\end{theorem}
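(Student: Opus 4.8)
The plan is to start from the pointwise identity in equation \eqref{df-015}, integrate it over the hypersphere $S_r$, and reduce everything to an integral over the base $\bC\bP^n$ via the Hopf fibration. Every term inside the braces of \eqref{df-015} is a basic $2n$-form, hence horizontal for the fibration $p\colon S_r\to\bC\bP^n$, so the passage from \eqref{df-014} to \eqref{df-015} has already discarded all of $\eta$ except its fiber part $\tfrac14 d\theta$. Fiber integration along the Reeb circle then contributes the constant $\int_{\text{fiber}}\tfrac14\,d\theta=\pi$, which exactly cancels the prefactor $1/\pi$ on the left-hand side and leaves an integral over $\bC\bP^n$. Writing $f:=\dot u_t=ru_r$, $\omega:=\omega_{FS}$ and $g:=d_B d^c_B u$ — all globally defined on $\bC\bP^n$ even though the Hopf coordinate is only local — we are reduced to establishing the identity
\begin{equation*}
\sum_{k=0}^{n}\binom{n}{k}\int_{\bC\bP^n} f^{n-k+1}\omega^{n-k}\wedge g^{k}\; -\; n\sum_{k=1}^{n}\binom{n-1}{k-1}\int_{\bC\bP^n} d_B f\wedge d^c_B u\wedge f^{n-k}\omega^{n-k}\wedge g^{k-1} = \sum_{k=0}^{n}\binom{n+1}{k}\int_{\bC\bP^n} f^{n-k+1}\omega^{n-k}\wedge g^{k}.
\end{equation*}

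The heart of the argument is an integration by parts that rewrites the second (derivative) sum in terms of the first. The key observation is that $f^{n-k}\,d_B f=\tfrac{1}{n-k+1}\,d_B(f^{n-k+1})$, so each summand on the left becomes $\tfrac{1}{n-k+1}\,d_B(f^{n-k+1})\wedge d^c_B u\wedge\omega^{n-k}\wedge g^{k-1}$. Since $\bC\bP^n$ is closed and both $\omega$ and $g$ are $d_B$-closed (because $d_B\omega=d_B d\eta=0$ and $d_B g=d_B(d_B d^c_B u)=0$), Stokes' theorem gives, with no boundary contribution,
\begin{equation*}
\int_{\bC\bP^n} d_B(f^{n-k+1})\wedge d^c_B u\wedge\omega^{n-k}\wedge g^{k-1} = -\int_{\bC\bP^n} f^{n-k+1}\,\omega^{n-k}\wedge g^{k},
\end{equation*}
where I have used $d_B\bigl(d^c_B u\wedge\omega^{n-k}\wedge g^{k-1}\bigr)=g\wedge\omega^{n-k}\wedge g^{k-1}=\omega^{n-k}\wedge g^{k}$.

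Substituting this back, the contribution of the second sum to the coefficient of $\int_{\bC\bP^n} f^{n-k+1}\omega^{n-k}\wedge g^{k}$ is $n\binom{n-1}{k-1}\tfrac{1}{n-k+1}=\binom{n}{k-1}$ for $1\le k\le n$, while the first sum contributes $\binom{n}{k}$. Pascal's identity $\binom{n}{k}+\binom{n}{k-1}=\binom{n+1}{k}$ then collapses the two sums into $\sum_{k=0}^{n}\binom{n+1}{k}\int_{\bC\bP^n} f^{n-k+1}\omega^{n-k}\wedge g^{k}$ (the $k=0$ term, present only in the first sum, already carries coefficient $\binom{n}{0}=\binom{n+1}{0}$), which is precisely the right-hand side of \eqref{df-016} upon recalling $f=\dot u_t$. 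The main obstacle I anticipate is bookkeeping rather than conceptual: one must pin down the period of $\theta$ and the induced orientation so that the fiber integration yields exactly the constant $\pi$ with the correct sign, and track the Koszul signs through the integration by parts to confirm the single minus sign above. Once those are settled, the remaining combinatorics is the elementary identity just described.
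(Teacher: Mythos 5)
Your proposal is correct and follows essentially the same route as the paper's proof: integrate out the fiber ($d\theta$ over a period of $4\pi$, so $\tfrac14 d\theta$ contributes exactly $\pi$), integrate the cross term by parts using $f^{n-k}d_Bf=\tfrac{1}{n-k+1}d_B(f^{n-k+1})$ together with $d_B$-closedness of $\omega_{FS}$ and $d_Bd^c_Bu$, and combine coefficients via Pascal's identity $\binom{n}{k}+\binom{n}{k-1}=\binom{n+1}{k}$. The signs and the combinatorial bookkeeping you flag as the only concerns all check out against the paper's equations (\ref{df-017})--(\ref{df-019}).
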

\begin{proof}
Take integration on both sides of equation (\ref{df-015}), 
and we can first integrate out the $d\theta$-direction over $[0, 4\pi]$
due to Fubini's Theorem. 
Then perform the integration by parts on the second term 
in the bracket on the R.H.S. of this equation as 
\begin{eqnarray}
\label{df-017}
&& (-n) \int_{\bC\bP^n} \sum_{k=1}^{n}  \left( \begin{array}{c} n-1 \\ k-1 \end{array} \right) 
(\dot{u}_t)^{n-k} d_B \dot{u}_t \wedge d^c_B u \wedge \omega_{FS}^{n-k} \wedge (d_B d^c_B u)^{k-1}
\nonumber\\
&=&  \int_{\bC\bP^n} \sum_{k=1}^{n}  \frac{n}{n-k+1}
\left( \begin{array}{c} n-1 \\ k-1 \end{array} \right) 
(\dot{u}_t)^{n-k+1} \omega_{FS}^{n-k} \wedge (d_B d^c_B u)^k. 
\nonumber\\
\end{eqnarray}
Plug in the first term in the bracket, and we obtain 

\begin{eqnarray}
\label{df-018}
&&  \left\{ \ \sum_{k=0}^{n} 
 \left(\begin{array}{c} n \\ k \end{array} \right)  + \sum_{k=1}^n  \frac{n}{n-k+1}\left( \begin{array}{c} n-1 \\ k-1 \end{array} \right)  \right\}
(\dot{u}_t)^{n-k+1} \omega_{FS}^{n-k} \wedge (d_B d^c_B u)^k
\nonumber\\
&=&   \sum_{k=1}^n \left( \begin{array}{c} n+1 \\ k\end{array} \right) 
(\dot{u}_t)^{n+1-k} \omega_{FS}^{n-k}\wedge (d_B d^c_B u)^k
+  (\dot{u}_t)^{n+1} \omega_{FS}^n.
\nonumber\\
\end{eqnarray}
Here we have used the combinatorial identity: 
\begin{equation}
\label{df-019}
\left( \begin{array}{c} n \\ k \end{array} \right)  +  \frac{n}{n-k+1}\left( \begin{array}{c} n-1 \\ k-1 \end{array} \right) 
= \left( \begin{array}{c} n+1 \\ k\end{array} \right). 
\end{equation}
Finally, combine with the two terms on the R.H.S. of equation (\ref{df-018}), 
and then our result follows.

\end{proof}

The above formula (equation \eqref{df-016})
boils down to the decomposition formula in the two dimensional case
if  we take $n=1$, see Theorem 4.4, \cite{Li23}.

\subsection{The positivity}
The plurisubharmonicity of the function $u$
induces the positivity of its complex hessian on $(\bC^{n+1})^*$.
As we have seen,
there is a natural splitting of the complex structure of the cone $(\bC^{n+1})^*$
under the standard Sasakian structure of $S^{2n+1}$. 
Therefore, we can study the positivity of the complex hessian
under this splitting.

Recall that  decompose the complex hessian of 
a function $u\in \cF^{\infty}(B_1)$ in equation (\ref{df-005}) as 
$$ dd^c u = \Theta_1 + \Theta_2.$$
Here the $2$-form $\Theta_1$ can be written as 
\begin{eqnarray}
\label{ps-001}
\Theta_1 &=& (ru_r)_{,r} dr\wedge\eta + dr\wedge d^c_B u_r + d_B(ru_r)\wedge \eta
\nonumber\\
&=& r^{-1}(ru_r)_{,r} (i\lambda^0\wedge\bar\lambda^0) + \frac{1}{2} \left( \lambda^0\wedge d^c_B u_r  + \bar\lambda^0\wedge d^c_B u_r  \right)
\nonumber\\
&+& \frac{i}{2r} \left\{  \bar\lambda_0 \wedge d_B(ru_r) - \lambda_0 \wedge d_B(ru_r)  \right\}. 
\end{eqnarray}

Moreover, the $2$-form $\Theta_2$ is of degree $(1,1)$ 
under the transversal holomorphic structure. 
In particular, it descends to a $(1,1)$-form with continuous coefficients on $\bC\bP^n$ as 
$$ \Theta_2: = (\dot{u}_t) \omega_{FS} + d_B d^c_B u. $$
Then we are going to prove that this $(1,1)$-form is positive for each $t<0$. 

\begin{lemma}
\label{lem-ps-001}
The $2$-form  $\Theta_2$ is a positive $(1,1)$-current on $\bC\bP^n$, i.e. we have 
\begin{equation}
\label{ps-0015}
(\dot{u}_t) \omega_{FS} + d_B d^c_B u \geq 0, 
\end{equation}
for each $t<0$ fixed. 
\end{lemma}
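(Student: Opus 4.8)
The plan is to deduce the positivity of $\Theta_2$ on the base $\bC\bP^n$ directly from the plurisubharmonicity of $u$ on the cone, by showing that the complementary piece $\Theta_1$ contributes nothing along the transversal (contact) directions. Since $u\in\cF^{\infty}(B_1)$ is plurisubharmonic, its complex Hessian $dd^c u$ is a positive $(1,1)$-form on $(\bC^{n+1})^*$, that is $-i\,dd^c u(V,\bar V)\geq 0$ for every $(1,0)$-vector $V$. The idea is to test this positivity only on vectors lying in the contact distribution $\cD^{1,0}$, namely $V=\sum_{\a=1}^n W^{\a} e_{\a}$, where $\{e_1,\dots,e_n\}$ is the local frame of $\cD^{1,0}$ constructed in Section \ref{sec-cs}.

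The first and key step is to verify that $\Theta_1(V,\bar V)=0$ for every such $V$. Inspecting the expression \eqref{ps-001}, each of the five terms of $\Theta_1$ carries a factor of $\lambda^0$ or $\bar\lambda^0$. Since $\lambda^0$ is of type $(1,0)$ and $\bar\lambda^0$ of type $(0,1)$, and since the duality $\lambda^A(e_B)=\delta^A_B$ gives $\lambda^0(e_{\a})=0$ for $\a\geq 1$, one has $\lambda^0(V)=\lambda^0(\bar V)=0$ and $\bar\lambda^0(V)=\bar\lambda^0(\bar V)=0$. Evaluating each wedge $\alpha\wedge\beta$ on the pair $(V,\bar V)$ through $\alpha(V)\beta(\bar V)-\alpha(\bar V)\beta(V)$, the factor $\lambda^0$ or $\bar\lambda^0$ forces every term to vanish, so $\Theta_1(V,\bar V)=0$.

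Combining this with the decomposition $dd^c u=\Theta_1+\Theta_2$ from \eqref{df-005}, I obtain $-i\,\Theta_2(V,\bar V)=-i\,dd^c u(V,\bar V)\geq 0$ for all $V\in\cD^{1,0}$. It remains to transfer this to the base. The form $\Theta_2=(\dot u_t)\omega_{FS}+d_Bd^c_Bu$ is basic of transversal type $(1,1)$: both $d\eta=\omega_{FS}$ and $d_Bd^c_Bu$ are basic, and $\dot u_t=r\d_r u$ is an $S^1$-invariant function. Since the frame $\{e_1,\dots,e_n\}$ of $\cD^{1,0}$ is dual to $\{\lambda^1,\dots,\lambda^n\}=\{d\z^1,\dots,d\z^n\}$, under the push-forward along the Hopf-fibration the $e_{\a}$ correspond to the holomorphic coordinate frame $\d_{\z^{\a}}$ of $T^{1,0}\bC\bP^n$. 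Hence positivity of $\Theta_2$ along $\cD^{1,0}$ is precisely positivity of the $(1,1)$-form $(\dot u_t)\omega_{FS}+d_Bd^c_Bu$ on $\bC\bP^n$, which establishes \eqref{ps-0015} pointwise for each fixed $t<0$.

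The step that requires the most care is the bookkeeping in the second paragraph: one must confirm that no term of $\Theta_1$ survives the pairing. This hinges entirely on the type decomposition $\lambda^0\in(T^*)^{1,0}$, $\bar\lambda^0\in(T^*)^{0,1}$ together with $e_{\a},\bar e_{\a}\in\cD\otimes\bC=\ker\lambda^0\cap\ker\bar\lambda^0$ — in other words, on the fact that the contact directions are genuinely orthogonal to the radial/Reeb plane spanned by $\lambda^0,\bar\lambda^0$. For a general $u\in\cF(B_1)$, where $d_Bd^c_Bu$ is only a current, the inequality \eqref{ps-0015} then follows by applying the smooth case to the standard regularizations $u_{\ep}$ and passing to the weak limit, using that the cone of positive $(1,1)$-currents is closed under weak convergence.
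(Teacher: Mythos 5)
Your proof is correct and rests on exactly the same mechanism as the paper's: every term of $\Theta_1$ lies in the ideal generated by $\lambda^0,\bar\lambda^0$, so the plurisubharmonicity of $u$ on the cone transfers to $\Theta_2$ along the contact distribution. The paper phrases this dually, by wedging $\Theta_2\wedge(i\gamma^2\wedge\bar\gamma^2)\wedge\cdots\wedge(i\gamma^n\wedge\bar\gamma^n)$ with $i\lambda^0\wedge\bar\lambda^0$ and identifying the result with $dd^c u$ against a product of positive $(1,1)$-forms, whereas you contract $dd^c u$ with vectors $V\in\cD^{1,0}$; for a continuous $(1,1)$-form these are equivalent formulations of positivity, so this is the same argument.
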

\begin{proof}
Let $\gamma^{\a}$ be an arbitrary local $(1,0)$-form on $\bC\bP^n$ for $\a=2,\cdots, n$. 
Then we need to prove the following $(n,n)$-form 
$$ T: = \Theta_2 \wedge (i\gamma^2\wedge \bar\gamma^2)\wedge\cdots\wedge (i\gamma^n\wedge \bar\gamma^n)$$
is a positive measure.
Thanks to the splitting of the complexified cotangent bundle, 
each of $\gamma^{\a}$'s can be written as a linear combination of the coframe $\lambda^{\b}$
for $\b=1,\cdots, n$,
see Section \ref{sec-cs}. 
Then we can write $T$ with respect to the local coframe, and obtain 
\begin{equation}
\label{ps-002}
T = f(r, \z, \bar\z) (i \lambda^1\wedge \bar\lambda^1)\wedge\cdots\wedge   (i \lambda^n\wedge \bar\lambda^n).
\end{equation}
Then everything boils down to prove 
that the continuous function $f$ is always non-negative. 
Take $T$ as an $(n,n)$-form on the cone $(\bC^{n+1})^*$, 
and then we can wedge it with the positive $(1,1)$-current $i\lambda^0\wedge\bar\lambda^0$ as 
\begin{eqnarray}
\label{ps-003}
(i\lambda^0\wedge\bar\lambda^0)\wedge T
&=&  \left\{ ( i\lambda^0\wedge\bar\lambda^0) \wedge \Theta_2 \right\}
\wedge (i\gamma^2\wedge \bar\gamma^2)\wedge\cdots\wedge (i\gamma^n\wedge \bar\gamma^n)
\nonumber\\
&=& \left\{ ( i\lambda^0\wedge\bar\lambda^0) \wedge (\Theta_1 + \Theta_2 )\right\}
\wedge (i\gamma^2\wedge \bar\gamma^2)\wedge\cdots\wedge (i\gamma^n\wedge \bar\gamma^n)
\nonumber\\
&=& dd^c u \wedge ( i\lambda^0\wedge\bar\lambda^0) \wedge
(i\gamma^2\wedge \bar\gamma^2)\wedge\cdots\wedge (i\gamma^n\wedge \bar\gamma^n)
\nonumber\\
&\geq& 0.
\end{eqnarray}
Here we have used equation (\ref{ps-001}) on the second line 
on the R.H.S. of equation (\ref{ps-003}). 
Hence we can infer $f(r, \z, \bar\z)\geq 0$, 
and then our result follows. 

\end{proof}

\smallskip

\section{The general results}
\smallskip

The goal of this section is to provide a useful estimate on the 
complex Monge-Amp\`ere mass 
$$ \int_{B_r} (dd^c u)^{n+1} $$
on each ball $B_r$ inside the unit ball. 
It means that we need to find an upper bound 
of the decomposition formula (Theorem \ref{thm-df-001}). 
This will be finished via an induction argument.  

Recall that the maximal directional Lelong number
$M_A(u)$ at a distance $A>0$ is defined as 
the maximum of $\dot{u}_{t}(\z)$ over all $\z\in\bC\bP^n$ with fixed $t= -A$. 
It is always a non-negative and finite real number, and non-increasing in $A$ as $A\rightarrow +\infty$. 

\subsection{A priori estimates}
In the following context, the positivity condition (Lemma \ref{lem-ps-001}) will be used repeatedly. 
Therefore, it is convenient to introduce the following brief notations for the $(1,1)$-forms:
$$ \ba:= \omega_{FS}; \ \ \ \ \  \bb: = d_B d^c_B u,$$
and for the functions 
$$ \bc: =\dot{u}_t; \ \ \ \ \  M: = M_A(u), $$
for any $ t< -A$. 
Moreover, we also denote 
$$ \be: = \bc\ba + \bb = \Theta_2, $$
and then it follows from the positivity conditions
\begin{align}
\label{gr-001}
\ba >0; \ \ \ \  \be\geq 0; \ \ \ \ M \geq \bc \geq 0.
\end{align}

During the combinatorial computation, 
we will delete the symbol $\wedge$ 
between the $2$-forms $\ba$, $\bb$ and $\be$.
In fact, 
there is no harm to pretend the wedge product between them 
as an ordinary multiplication between polynomials, 
since $2$-forms are commutative with respect to the wedge.  
Then we first have a rough estimate as follows. 
\begin{lemma}
\label{lem-gr-001}
For a function $u\in\cF^{\infty}(B_1)$, we have the estimate
\begin{equation}
\label{gr-002}
\int_{B_r} (dd^c u)^{n+1} \leq (n+1)\pi^{n+1} M_A^{n+1}(u),
\end{equation}
for all $r < e^{-A}$. 
\end{lemma}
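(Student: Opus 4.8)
The plan is to turn the ball integral into a transversal integral over $\bC\bP^n$ and then estimate it using only the positivity coming from Lemma \ref{lem-ps-001}. First I would combine the Stokes identity of Lemma \ref{lem-pfc-002} with the decomposition formula of Theorem \ref{thm-df-001} to write, for $r=e^t<e^{-A}$,
\begin{equation*}
\int_{B_r}(dd^c u)^{n+1}=\pi\int_{\bC\bP^n}\sum_{k=0}^n\binom{n+1}{k}\bc^{\,n+1-k}\,\ba^{\,n-k}\,\bb^{\,k},
\end{equation*}
in the notation $\ba=\omega_{FS}$, $\bb=d_Bd^c_Bu$, $\bc=\dot{u}_t$, $\be=\bc\ba+\bb=\Theta_2$; write $S$ for the integrand on the right. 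Before estimating I would record the three inputs that make the argument run: $\ba>0$ and $\be\geq0$ on $\bC\bP^n$ (Lemma \ref{lem-ps-001}); the normalization $\int_{\bC\bP^n}\ba^n=\pi^n$; and the pointwise bound $0\leq\bc\leq M$ on $S_r$, where $M:=M_A(u)$. The last bound is where the hypothesis $r<e^{-A}$ enters: since each $u_t(\z)$ is convex in $t$, its slope $\dot{u}_t(\z)$ is non-decreasing, so for $t<-A$ one has $\dot{u}_t(\z)\leq\d_t^+u_t(\z)|_{t=-A}\leq M_A(u)=M$. Thus the whole problem reduces to the purely transversal inequality $\int_{\bC\bP^n}S\leq(n+1)M^{\,n+1}\pi^n$.

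The substance is to control $S$, whose individual mixed terms $\ba^{\,n-k}\bb^{\,k}$ carry no sign — only the full form $\be$ is nonnegative. My plan is therefore to eliminate $\bb$ in favour of $\be$ by writing $\bb=\be-\bc\ba$ and rearranging. Treating the wedge as a commutative product and collecting powers, this turns the sum into the positive-form expression
\begin{equation*}
S=\sum_{j=0}^n(-1)^{\,n-j}\binom{n+1}{j}\,\bc^{\,n+1-j}\,\ba^{\,n-j}\,\be^{\,j},
\end{equation*}
so that $\int_{\bC\bP^n}S=\sum_{j=0}^n(-1)^{n-j}\binom{n+1}{j}P_j$ with $P_j:=\int_{\bC\bP^n}\bc^{\,n+1-j}\be^{\,j}\ba^{\,n-j}\geq0$. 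Equivalently, diagonalizing $\be$ against $\ba$ with eigenvalues $\mu_i\geq0$, the density is $S/\ba^n=(n+1)\int_0^{\bc}\prod_i(\mu_i-\sigma)\,d\sigma$. The leading term $j=n$ is $(n+1)P_n=(n+1)\int_{\bC\bP^n}\bc\,\be^n\leq(n+1)M\int_{\bC\bP^n}\be^n$, which already has the right shape; the task is to show that the lower-order alternating terms, equivalently the increments $P_j-P_{j-1}=\int_{\bC\bP^n}\bc^{\,n+1-j}\,\bb\,\be^{\,j-1}\ba^{\,n-j}$, do not spoil the bound. I would run this as an induction on the number of $\be$-factors, at each stage peeling off one factor $\be=\bc\ba+\bb$, using $0\leq\bc\leq M$ against the positive measures $\be^{\,j}\ba^{\,n-j}$ and the normalization $\int_{\bC\bP^n}\bc\,\ba^n\leq M\pi^n$ to propagate the estimate.

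The hard part will be exactly this last step, for two linked reasons. First, because only $\be$ (not $\bb$) is positive, one cannot bound term by term, and the substitution $\bb=\be-\bc\ba$ produces alternating signs whose near-cancellation is what creates the sharp constant $n+1$: a crude estimate such as $\prod_i(\mu_i-\sigma)\leq\prod_i(\mu_i+\sigma)$ only yields a constant of order $2^{n+1}$. Second, the form $\be$ is not closed — $d\be=d\bc\wedge\ba$ — so integrating by parts to dispose of the $\bb=d_Bd^c_Bu$ in the increments $P_j-P_{j-1}$ unavoidably generates derivatives $d\bc=d\dot{u}_t$ that carry no sign. The crux is thus a combinatorial bookkeeping of these $\bb$-contributions, organized so that the genuinely uncontrolled derivative terms cancel and the surviving positive mass is measured against $\int_{\bC\bP^n}\bc\,\ba^n\leq M\pi^n$; this is where I expect the induction to demand the most care, and it is presumably the step the paper isolates.
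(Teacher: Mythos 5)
Your setup is correct and matches the paper's: you reduce via Lemma \ref{lem-pfc-002} and Theorem \ref{thm-df-001} to bounding $\int_{\bC\bP^n}S$ with $S=\sum_k\binom{n+1}{k}\bc^{n+1-k}\ba^{n-k}\bb^k$, you correctly justify $0\leq\bc\leq M$ for $t<-A$ from convexity, and your alternating identity $S=\sum_j(-1)^{n-j}\binom{n+1}{j}\bc^{n+1-j}\ba^{n-j}\be^j$ is a correct rewriting (it is exactly the paper's Corollary \ref{cor-gr-001}). But the proof stops there: the inequality $\int_{\bC\bP^n}S\leq(n+1)M^{n+1}\pi^n$ is never established. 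You explicitly label the control of the alternating sum as ``the hard part'' and only sketch an induction that peels off factors of $\be=\bc\ba+\bb$. That sketch is essentially the paper's Lemma \ref{lem-gr-0015} and Proposition \ref{prop-gr-002}, and if carried out it produces the constant $C_n=\sum_k\binom{\cdot}{\cdot}B_{\cdot}$ built from the Bell-type numbers $B_{k+1}=\sum_j\binom{k}{j}B_j$; this satisfies $C_2=7>3$, $C_3=24>4$, so for $n\geq2$ it does \emph{not} give the stated constant $n+1$. Already for $n=2$, dropping the one negative term in your representation leaves $3\bc\be^2+\bc^3\ba^2$, whose integral is only bounded by $4M^3\pi^2$; reaching $3M^3\pi^2$ would require exploiting the cancellation you defer, and you give no mechanism for it.

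The step you are missing is much simpler than the route you chose, and it avoids the alternating signs entirely. Instead of substituting $\bb=\be-\bc\ba$, majorize $S$ by the \emph{same} polynomial with the function $\bc$ replaced by the constant $M$: the telescoping factorization $X^{n+1}-Y^{n+1}=(X-Y)\sum_{k=0}^nX^{n-k}Y^k$ applied to $X=M\ba+\bb$ and $Y=\bc\ba+\bb=\be$ gives
\begin{equation*}
\sum_{k=0}^n\binom{n+1}{k}\bigl(M^{n+1-k}-\bc^{n+1-k}\bigr)\ba^{n-k}\bb^k
=(M-\bc)\sum_{k=0}^n(M\ba+\bb)^{n-k}\wedge\be^{k}\;\geq\;0,
\end{equation*}
since $M-\bc\geq0$, $\be\geq0$ by Lemma \ref{lem-ps-001}, and $M\ba+\bb=(M-\bc)\ba+\be\geq0$. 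The majorant $\sum_k\binom{n+1}{k}M^{n+1-k}\ba^{n-k}\bb^k$ now has \emph{constant} coefficients, so every term with $k\geq1$ integrates to zero over $\bC\bP^n$ by Stokes, and only $M^{n+1}\int_{\bC\bP^n}\ba^n=M^{n+1}\pi^n\leq(n+1)M^{n+1}\pi^n$ survives. This is the entire content of the paper's proof; the delicate positivity bookkeeping you were anticipating is only needed later, for the refined bound of Proposition \ref{prop-gr-002} in which one factor of $M$ is traded for $\int_{\bC\bP^n}\dot u_t\,\omega_{FS}^n$.
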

\begin{proof}
A direct computation shows that we have the following equation
\begin{eqnarray}
\label{gr-003}
&& \sum_{k=0}^n  \left( \begin{array}{c} n+1 \\ k\end{array} \right) 
M_A^{n+1-k} \omega_{FS}^{n-k}\wedge (d_B d^c_B u)^k
\nonumber\\
&-& \sum_{k=0}^n  \left( \begin{array}{c} n+1 \\ k\end{array} \right) 
(\dot{u}_t)^{n+1-k} \omega_{FS}^{n-k}\wedge (d_B d^c_B u)^k
\nonumber\\
&=& (M_A - \dot{u}_t) 
\left\{  \sum_{k=0}^n (M_A \omega_{FS} + d_B d^c_B u)^{n-k}\wedge \Theta_2^k \right\}.
\nonumber\\
\end{eqnarray}
In fact, we can infer it from the combinatorial identity 
\begin{eqnarray}
\label{gr-004}
&& \sum_{k=0}^n  \left( \begin{array}{c} n+1 \\ k\end{array} \right) 
 M^{n+1-k} \ba^{n-k} \bb^k 
 -  \sum_{k=0}^n  \left( \begin{array}{c} n+1 \\ k\end{array} \right) 
 \bc^{n+1-k} \ba^{n-k} \bb^k
 \nonumber\\
 &=& (M - \bc) \left\{ \sum_{k=0}^n (M\ba+\bb)^{n-k} \cdot (\bc\ba + \bb)^k  \right\} \geq 0.
 \nonumber\\ 
\end{eqnarray}
Thanks to the Stokes Theorem, each term like 
$$ \omega^{n-k}_{FS}\wedge (d_B d^c_B u)^k $$
for $k=1,\cdots, n$ vanishes if we take its integration on $\bC\bP^n$.
Therefore, the R.H.S. of the decomposition formula (Theorem \ref{thm-df-001}) 
can be estimated from the above as 
\begin{equation}
\label{gr-005}
\frac{1}{\pi}\int_{S_r} d^c u \wedge (dd^c u)^{n} \leq (n+1) M_A^{n+1} \int_{\bC\bP^n} \omega_{FS}^n,
\end{equation}
and then our result follows.

\end{proof}

First take the radius $r\rightarrow 0^+$ on the L.H.S. of equation (\ref{gr-002}),
and then it converges to the residual Monge-Amp\`ere mass of $u$ at the origin.
Next let the distance $A$ converges to $+\infty$ on the R.H.S. of this equation, 
and then we obtain the following rough estimate. 

\begin{theorem}
\label{thm-gr-001}
For a function $u\in \cF^{\infty}(B_1)$, we can control its residual mass 
by the maximal directional Lelong number at the origin as 
\begin{equation}
\label{gr-006}
\tau_u(0) \leq (n+1) \lambda^{n+1}_u(0).
\end{equation}
\end{theorem}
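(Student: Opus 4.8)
The plan is to derive the estimate as a double limit applied to the rough bound of Lemma~\ref{lem-gr-001}, sending first the radius $r\to 0^+$ and then the distance $A\to+\infty$. The combinatorial content has already been absorbed into that lemma, so what remains is purely a limiting argument combining the two defining limits for the residual mass and for the maximal directional Lelong number at the origin.

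First I would fix a distance $A>0$. For every radius $r<e^{-A}$, Lemma~\ref{lem-gr-001} gives
\begin{equation*}
\int_{B_r}(dd^c u)^{n+1}\leq (n+1)\pi^{n+1}M_A^{n+1}(u).
\end{equation*}
The right-hand side is independent of $r$, so I can let $r\to 0^+$ on the left. By the definition of the residual mass in equation~\eqref{pfc-003}, the Monge-Amp\`ere mass $\mathrm{MA}(u)(B_r)$ tends to the Dirac mass $\mathrm{MA}(u)(\{0\})=\pi^{n+1}\tau_u(0)$ as $r\to 0^+$. Dividing by $\pi^{n+1}$ then yields
\begin{equation*}
\tau_u(0)\leq (n+1)M_A^{n+1}(u),
\end{equation*}
valid for every $A>0$.

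Second, I would let $A\to+\infty$. Recall from Proposition~\ref{prop-pfc-001} and the remarks preceding Definition~\ref{def-pfc-004} that $M_A(u)$ is non-negative, finite, and non-increasing in $A$, and that $M_A(u)\to\lambda_u(0)$ by Definition~\ref{def-pfc-004}. Hence $M_A^{n+1}(u)\to\lambda_u^{n+1}(0)$, and since the previous inequality holds for all $A$, taking the infimum over $A$ (equivalently, the limit as $A\to+\infty$) gives the claimed estimate $\tau_u(0)\leq (n+1)\lambda_u^{n+1}(0)$.

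I do not expect a serious obstacle here, since the genuine work is concentrated in Lemma~\ref{lem-gr-001}; the only point requiring care is the convergence $\mathrm{MA}(u)(B_r)\to\pi^{n+1}\tau_u(0)$ as $r\to 0^+$, which is guaranteed by the very definition~\eqref{pfc-003} of $\tau_u(0)$. Everything else hinges on the order of the two limits, which must be taken exactly as stated — $r$ first and $A$ afterwards — so that the $r$-independent bound $M_A(u)$ survives the inner limit before being optimized in $A$.
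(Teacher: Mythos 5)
Your proposal is correct and follows essentially the same route as the paper, which likewise obtains the bound by sending $r\to 0^+$ in Lemma \ref{lem-gr-001} to recover $\pi^{n+1}\tau_u(0)$ on the left and then letting $A\to+\infty$ so that $M_A(u)\to\lambda_u(0)$ on the right. The paper states this in two sentences immediately before the theorem; your write-up just makes the same two limits explicit.
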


However, this rough estimate in Lemma \ref{lem-gr-001}
is not enough for our purpose to prove the zero mass conjecture, 
since there is no Lelong number that could possibly appear 
from the R.H.S. of equation (\ref{gr-002}). 
The difficulty is that we should keep at least one $\bc$, 
but no combination terms like $\bc\cdot\bb$ 
in the end of the estimate. 

In order to overcome this difficulty, 
we will utilize the positivity of 
$\ba$, $\bc$ and $\be$, and rewrite the decomposition formula as follows. 

\begin{cor}
\label{cor-gr-001}
For a function $u\in\cF^{\infty}(B_1)$, 
the decomposition formula can be written as 
\begin{eqnarray}
\label{gr-0065}
&&\frac{1}{\pi} \int_{S_r} d^c u \wedge (dd^c u)^n 
\nonumber\\
&=& \sum_{k=0}^n   \left( \begin{array}{c} n+1 \\ k+1 \end{array} \right) (-1)^{k}
\int_{\bC\bP^n} 
(\dot{u}_t)^{k+1} \omega_{FS}^{k}\wedge \Theta_2^{n-k}. 
\end{eqnarray}
on each hypersphere $S_r$ with radius $r\in (0,1)$. 
\end{cor}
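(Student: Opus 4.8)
The plan is to derive Corollary~\ref{cor-gr-001} purely algebraically from the decomposition formula in Theorem~\ref{thm-df-001}. We are given the expansion
\begin{equation*}
\frac{1}{\pi} \int_{S_r} d^c u \wedge (dd^c u)^n
= \sum_{k=0}^n \begin{pmatrix} n+1 \\ k\end{pmatrix}
\int_{\bC\bP^n} (\dot{u}_t)^{n+1-k} \omega_{FS}^{n-k}\wedge (d_B d^c_B u)^k,
\end{equation*}
and we want to re-express it in terms of $\Theta_2 = \bc\ba + \bb$ rather than $\bb = d_B d^c_B u$. The natural strategy is to substitute $\bb = \Theta_2 - \bc\ba = \be - \bc\ba$ into each term and collect powers, using the brief notation $\ba = \omega_{FS}$, $\bc = \dot{u}_t$, $\be = \Theta_2$, and treating the wedge products as commuting monomials as the paper already licenses.

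First I would rewrite the right-hand side entirely in the $\ba,\bb,\bc$ notation, so the claim becomes the polynomial identity
\begin{equation*}
\sum_{k=0}^n \begin{pmatrix} n+1 \\ k\end{pmatrix} \bc^{\,n+1-k}\, \ba^{n-k}\, \bb^{\,k}
= \sum_{k=0}^n \begin{pmatrix} n+1 \\ k+1 \end{pmatrix} (-1)^{k}\, \bc^{\,k+1}\, \ba^{k}\, \be^{\,n-k},
\end{equation*}
understood as an equality of $(n,n)$-forms, hence of coefficients after integration. To prove this I would expand the right-hand side by inserting $\be = \bc\ba + \bb$ and applying the binomial theorem to $\be^{\,n-k} = (\bc\ba + \bb)^{n-k}$. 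Each resulting monomial has the shape $\bc^{\,a}\,\ba^{\,b}\,\bb^{\,c}$, and because every term on both sides carries total degree $n+1$ in $\bc\ba+\bb$-type factors (specifically, each monomial satisfies $a = (\text{power of }\bc) $ with $b+c = n$ and $a+b = $ fixed), the bookkeeping reduces to a double sum that I would reorganize by fixing the final power of $\bb$.

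The crux is then the combinatorial identity obtained after swapping the order of summation: for each fixed exponent $j$ of $\bb$, the coefficient of $\bc^{\,n+1-j}\ba^{\,n-j}\bb^{\,j}$ on the right must equal $\binom{n+1}{j}$. Concretely, after the substitution the right-hand coefficient becomes a signed sum $\sum_{k} \binom{n+1}{k+1}\binom{n-k}{j}(-1)^{k-(n-j)}$ (up to reindexing), and the whole proof hinges on showing this collapses to $\binom{n+1}{j}$. I expect this alternating-sum evaluation to be the main obstacle, though it is a standard hypergeometric-type cancellation: I would handle it either by the Vandermonde/Chu identity or by a generating-function argument, writing $\sum_j \binom{n+1}{j}\bb^j \ba^{-j}$ as a coefficient extraction from $(\bc\ba + \bb)^{n+1}$ versus $\ba\,(\cdots)$, which makes the sign pattern transparent. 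Once the coefficient identity is verified, the displayed forms agree as currents on $\bC\bP^n$, and integrating gives the stated reformulation; no analytic input beyond Theorem~\ref{thm-df-001} is needed, so the argument is self-contained modulo this algebra.
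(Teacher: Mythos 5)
Your proposal is correct and matches the paper's proof in approach: the paper likewise treats Corollary~\ref{cor-gr-001} as a purely algebraic consequence of Theorem~\ref{thm-df-001}, reducing it to exactly the combinatorial identity you state (equation~(\ref{gr-0066})), which it asserts without proof. Your extra step of verifying that identity by substituting $\bb = \be - \bc\ba$, expanding, and collapsing the alternating sum $\sum_{k}(-1)^k\binom{n+1}{k+1}\binom{n-k}{j} = \binom{n+1}{j}$ (equivalently, by the double binomial expansion of $(\bc\ba+\bb)^{n+1}$) is a correct filling-in of the detail the paper leaves to the reader.
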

\begin{proof}
It directly follows from the following combinatorial identity
\begin{eqnarray}
\label{gr-0066}
&&\sum_{k=0}^n  \left( \begin{array}{c} n+1 \\ k\end{array} \right) 
\bc^{n-k+1}  \ba^{n-k} \bb^k
\nonumber\\
&=& \sum_{k=1}^{n+1} \left( \begin{array}{c} n+1 \\ k\end{array} \right) 
(-1)^{k-1} ( \bc\ba + \bb )^{n+1-k} \bc^k \ba^{k-1}
\nonumber\\
&=& \sum_{ j=0}^n  \left( \begin{array}{c} n+1 \\  j+1   \end{array} \right) 
(-1)^j  \be^{n-j} \bc^{j+1} \ba^j.  
\end{eqnarray}
\end{proof}

The formula in Corollary \ref{cor-gr-001} gives a new representation of 
the decomposition formula of the complex Monge-Amp\`ere mass, 
such that each monomial in it like $\be^{n-j} \bc^{j+1} \ba^j$ is positive. 
Then we can infer the following crucial estimate on the upper bound of such 
monomials on the R.H.S. of equation (\ref{gr-0065}). 
Before moving on, we introduce the notation
$$ \{  \ba, \bb  \}^n_1. $$
It stands for a polynomial consisting of 
monomials like $\ba^{n-k} \cdot \bb^k$ for $k=1, \cdots, n$, 
and here we permit the constant $M$ appearing as coefficients of the monomials. 
We note that the integral of each monomial inside the polynomial $\{\ba,\bb \}^n_1$
vanishes, since we have 
$$ \int_{\bC\bP^n} \omega_{FS}^{n-k}\wedge (d_B d^c_B u)^{k} = 0,  $$
for each $k=1,\cdots, n$ by the Stokes Theorem. 
Moreover, the part in a polynomial without the factor 
$ \{  \ba, \bb  \}_1^n $ will be called as \emph{the principal part} of this polynomial. 

\begin{lemma}
\label{lem-gr-0015}
For each integer $0\leq k\leq n$, 
there is a positive constant $B_k: = B(k, n)$ such that we have 
\begin{equation}
\label{gr-0067}
\int_{\bC\bP^n} 
(\dot{u}_t)^{n+1-k} \omega_{FS}^{n-k}\wedge \Theta_2^{k}
\leq 
B_k M_A^n(u)  \int_{\bC\bP^n} 
(\dot{u}_t)\omega_{FS}^{n},
\end{equation}
for all $r = e^t < e^{-A}$. 
Moreover, the constants can be constructed inductively by setting $B_0 =1$, and satisfying 
$$ B_{k+1}= \sum_{j=0}^{k}  \left( \begin{array}{c} k \\ j \end{array} \right) B_j, $$
for each $k=0, \cdots, n-1$.
\end{lemma}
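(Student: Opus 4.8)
The plan is to recast the left-hand side as a single family of integrals and induct on the power $k$ of $\be=\Theta_2$. Set $I_k:=\int_{\bC\bP^n}\bc^{\,n+1-k}\ba^{n-k}\be^k$ (the quantity to be estimated), $T_k:=\int_{\bC\bP^n}\bc\,\ba^{n-k}\be^k$, and $L:=\int_{\bC\bP^n}\bc\,\ba^n$. Since $0\le\bc\le M$ and $\ba^{n-k}\be^k\ge0$ is a positive form by Lemma \ref{lem-ps-001}, the pointwise bound $\bc^{\,n+1-k}\le M^{\,n-k}\bc$ gives $I_k\le M^{\,n-k}T_k$, so it suffices to prove $T_k\le B_kM^kL$. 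The base case is immediate ($T_0=L$, $B_0=1$), and $k=1$ already reveals the mechanism: replacing $\bc\le M$ and applying Stokes yields $T_1\le M\int_{\bC\bP^n}\ba^{n-1}\be=M\int_{\bC\bP^n}\ba^{n-1}(\bc\ba+\bb)=M(L+0)=ML$, because $\ba^{n-1}$ is closed and $\int_{\bC\bP^n}\omega_{FS}^{n-1}\wedge d_Bd^c_Bu=0$. Observe how the single factor $\bc$ — and hence $L$ — is recovered from the $\bc\ba$ part of $\be$, while the $\bb$ part is annihilated by Stokes.

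For the inductive step I would use the positivity comparison $M\ba+\bb\ge\be\ge0$, valid because $(M\ba+\bb)-\be=(M-\bc)\ba\ge0$. Telescoping products of positive $(1,1)$-forms gives the form inequality $\be^{k+1}\le\be\wedge(M\ba+\bb)^k$; wedging with the positive form $\ba^{n-k-1}$ and integrating bounds $R_{k+1}:=\int_{\bC\bP^n}\ba^{n-k-1}\be^{k+1}$ by $\int_{\bC\bP^n}\ba^{n-k-1}\be\wedge(M\ba+\bb)^k$. Expanding $(M\ba+\bb)^k$ binomially and using $\be\wedge\bb^{\,j}=\bc\ba\wedge\bb^{\,j}+\bb^{\,j+1}$ together with the Stokes vanishing $\int_{\bC\bP^n}\ba^{n-1-j}\bb^{\,j+1}=0$, each term collapses to a $\bb$-monomial carrying a single $\bc$:
\[ R_{k+1}\le\sum_{j=0}^{k}\binom{k}{j}M^{\,k-j}W_j,\qquad W_j:=\int_{\bC\bP^n}\bc\,\ba^{n-j}\bb^{\,j}. \]
Combining $T_{k+1}\le MR_{k+1}$ with the target estimate $W_j\le B_jM^jL$ produces $T_{k+1}\le\sum_{j=0}^k\binom kj M^{\,k+1-j}B_jM^jL=B_{k+1}M^{k+1}L$, which is exactly the recursion $B_{k+1}=\sum_{j=0}^k\binom kj B_j$. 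Thus the combinatorial identity for the constants is automatic once the $W_j$ are controlled.

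The crux — and the step I expect to be the main obstacle — is the estimate $W_j\le B_jM^jL$ for the mixed integrals with several factors of $\bb=d_Bd^c_Bu$. The cases $j=0,1$ are clean: $W_0=L$, while $W_1=\tfrac1n\int_{\bC\bP^n}\bc\,(\tr_\ba\bb)\,\ba^n\le ML$ follows from the pointwise constraint $\tr_\ba\bb\ge-n\bc$ (a consequence of $\be\ge0$) together with the global relation $\int_{\bC\bP^n}(\tr_\ba\bb)\,\ba^n=0$, which forces $\int(\tr_\ba\bb)_+\le\int(\tr_\ba\bb)_-\le n\int\bc\,\ba^n$. For $j\ge2$ the difficulty is genuine: integration by parts on $W_j$ only returns trivial identities (transferring $d_B$ off one $\bb$ and back reproduces $W_j$), and pointwise positivity of $\be$ alone is too weak, since the higher elementary symmetric functions of the $\ba$-eigenvalues of $\bb$ are not controlled by their mean. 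One must instead feed the Stokes relations $\int_{\bC\bP^n}\ba^{n-j}\bb^{\,j}=0$ back into the estimate and rewrite each $\bb$ as $\be-\bc\ba$, so that the non-negative $\be$-contributions are absorbed by the induction hypothesis while the $\bc\ba$-contributions, appearing with a favourable sign, are discarded. Arranging this sign bookkeeping to close at exactly the Bell constants $B_j$, rather than at a larger value, is the delicate part of the argument.
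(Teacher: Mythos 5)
Your reduction runs into a genuine gap exactly where you say it does, and the obstacle is created by the reduction itself. After bounding $T_{k+1}\le M R_{k+1}$ you have discarded the weight $\bc$ too early: expanding $R_{k+1}\le\int\ba^{n-k-1}\be\wedge(M\ba+\bb)^k$ and killing the $\bb^{j+1}$-terms by Stokes leaves you with the integrals $W_j=\int_{\bC\bP^n}\bc\,\ba^{n-j}\wedge\bb^{j}$, which carry pure powers of the \emph{non-positive} form $\bb=d_Bd^c_Bu$. These are not of the shape covered by your inductive hypothesis (which controls $T_j=\int\bc\,\ba^{n-j}\wedge\be^{j}$), and the bound $W_j\le B_jM^jL$ that your recursion needs is asserted but not proved for $j\ge 2$. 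Your proposed repair --- substitute $\bb=\be-\bc\ba$ and discard the negatively-signed positive terms --- does give \emph{a} bound, namely $W_j\le\bigl(\sum_{i\equiv j\,(2)}\binom{j}{i}B_i\bigr)M^jL$, but already at $j=2$ this yields $3M^2L$ rather than $B_2M^2L=2M^2L$, so the induction does not close at the stated constants $B_{k+1}=\sum_{j=0}^{k}\binom{k}{j}B_j$. (With those inflated constants you would still get a dimensional bound sufficient for the main theorem, but not the lemma as stated.)

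The paper's proof avoids $W_j$ entirely by keeping the factor $\bc$ attached throughout and working with form inequalities modulo the Stokes-null monomials $\{\ba,\bb\}^n_1$. Concretely, one writes
\begin{equation*}
\bc\,\ba^{n-k}\be^{k}\ \le\ M\,\ba^{n-k}\wedge(\bb+\bc\ba)\wedge(\bb+M\ba)^{k-1},
\end{equation*}
splits the \emph{exact} factor $\be=\bb+\bc\ba$: the $\bb$-part produces only monomials $\ba^{n-j}\bb^{j}$ with $j\ge1$ (Stokes-null), while the $\bc\ba$-part carries the positive weight $M\bc\,\ba^{n-k+1}$, under which one may upgrade $0\le\bb+M\ba\le\be+M\ba$ and expand binomially into $\sum_{j}\binom{k-1}{j}M^{k-j}\bc\,\ba^{n-j}\be^{j}$ --- each term being precisely an instance of the induction hypothesis. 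This is what produces the recursion $B_k=\sum_{j=0}^{k-1}\binom{k-1}{j}B_j$ on the nose. So the missing idea is: perform the replacement $\bb\le\be$ \emph{inside} the binomial factor while the positive weight $\bc\ba^{n-k+1}$ is still present, rather than reducing to unweighted integrals and trying to reinsert $\bc$ afterwards.
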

\begin{proof}
First we claim that the following estimate holds 
\begin{equation}
\label{gr-0068}
 \bc \ba^{n-k} \be^{k} \leq B_k M^k \bc \ba^n + \{\ba,\bb \}^n_1,  
\end{equation}
for each $k=0,\cdots, n$. Then we have 
\begin{equation}
\label{gr-0069}
 \bc^{n-k+1} \ba^{n-k} \be^{k} \leq B_k M^n \bc \ba^n + \{\ba,\bb \}^n_1,  
\end{equation}
for each $k$, and our result follows from the Stokes Theorem. 

In order to prove the claim,
we will invoke an induction argument on equation (\ref{gr-0068}).
First check for $k=0$, 
and hence we have  $B_0 =1$. 
Then we have for each $k=1, \cdots, n$
\begin{eqnarray}
\label{gr-0071}
\ba^{n-k} \be^k \bc &\leq& M \ba^{n-k} (\bb+\ba\bc)(\bb + M\ba)^{k-1}
\nonumber\\
&=& \{\ba,\bb \}^n_1 + M \ba^{n-k+1}\bc (\bb+ M\ba)^{k-1}
\nonumber\\
&\leq& \{\ba,\bb \}^n_1 + M \ba^{n-k+1}\bc (\be+ M\ba)^{k-1},
\end{eqnarray}
and then it follows from our induction hypothesis 
\begin{eqnarray}
\label{gr-0072}
&& M \ba^{n-k+1}\bc (\be+ M\ba)^{k-1} =
\sum_{j=0}^{k-1}  \left( \begin{array}{c} k-1 \\ j \end{array} \right)
M^{k-j} \ba^{n-j} \be^j \bc
\nonumber\\
&\leq&  M^k \sum_{j=0}^{k-1}  \left( \begin{array}{c} k-1 \\ j \end{array} \right) B_j \ba^n \bc + \{ \ba, \bb \}^n_1.
\end{eqnarray}
Here the constant $B_k$ is defined inductively by 
$$ B_{k}= \sum_{j=0}^{k-1}  \left( \begin{array}{c} k-1 \\ j \end{array} \right) B_j.$$
Combine with equation (\ref{gr-0071}) and (\ref{gr-0072}),
and then our claim follows. 

\end{proof}

Then we are ready to find a useful upper bound of the 
complex Monge-Amp\`ere mass via the decomposition formula 
and estimate in Lemma \ref{lem-gr-0015}.

\begin{prop}
\label{prop-gr-002}
For a function $u\in \cF^{\infty}(B_1)$, 
there exists a dimensional constant $C_n  \geq (n+1)$ satisfying 
\begin{equation}
\label{gr-007}
\frac{1}{\pi} \int_{B_r} (dd^c u)^{n+1} \leq C_n  M^n_A(u)\int_{\bC\bP^n} (\dot{u}_t) \omega^n_{FS},
\end{equation}
for each $r = e^t < e^{-A}$.
\end{prop}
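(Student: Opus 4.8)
The plan is to combine the Stokes-type identity of Lemma \ref{lem-pfc-002} with the sign-alternating representation of the decomposition formula in Corollary \ref{cor-gr-001}, and then to invoke the monomial estimate of Lemma \ref{lem-gr-0015} term by term. First I would use Lemma \ref{lem-pfc-002} to replace the ball integral by a boundary integral,
$$\frac{1}{\pi}\int_{B_r} (dd^c u)^{n+1} = \frac{1}{\pi}\int_{S_r} d^c u \wedge (dd^c u)^n,$$
so that everything reduces to estimating the right-hand side on a fixed hypersphere $S_r$ with $r = e^t < e^{-A}$.

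Next I would expand this boundary integral by Corollary \ref{cor-gr-001} as the alternating sum
$$\frac{1}{\pi}\int_{S_r} d^c u \wedge (dd^c u)^n = \sum_{k=0}^n \binom{n+1}{k+1}(-1)^{k} I_k, \qquad I_k := \int_{\bC\bP^n} (\dot{u}_t)^{k+1}\,\omega_{FS}^{k}\wedge \Theta_2^{n-k}.$$
The crucial observation is that every $I_k$ is non-negative: by Lemma \ref{lem-ps-001} the form $\Theta_2 = \be$ is a positive $(1,1)$-current, while $\ba = \omega_{FS} > 0$ and $\bc = \dot{u}_t \geq 0$, so each integrand $\bc^{k+1}\ba^{k}\be^{n-k}$ is a wedge of non-negative $(1,1)$-forms against a non-negative function and hence $I_k \geq 0$. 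Consequently the odd-index (negative) terms only decrease the total, and I may discard them to obtain the upper bound
$$\frac{1}{\pi}\int_{S_r} d^c u \wedge (dd^c u)^n \leq \sum_{k \text{ even}} \binom{n+1}{k+1} I_k.$$

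The final step is to apply Lemma \ref{lem-gr-0015} to each surviving $I_k$. Matching indices by setting $m = n-k$, one checks that $I_k$ is precisely of the form controlled by that lemma, so
$$I_k \leq B_{n-k}\, M_A^n(u) \int_{\bC\bP^n} (\dot{u}_t)\,\omega_{FS}^n,$$
for all $r = e^t < e^{-A}$. Summing over the even indices and collecting constants then gives
$$\frac{1}{\pi}\int_{B_r} (dd^c u)^{n+1} \leq \Big(\sum_{k \text{ even}} \binom{n+1}{k+1} B_{n-k}\Big) M_A^n(u)\int_{\bC\bP^n} (\dot{u}_t)\,\omega_{FS}^n,$$
so one may take $C_n := \sum_{k \text{ even}} \binom{n+1}{k+1} B_{n-k}$ (or the cruder sum over all $k$). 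The bound $C_n \geq n+1$ follows at once from the $k=0$ term, since $\binom{n+1}{1} B_n = (n+1) B_n \geq n+1$ because the constants $B_j$ increase with $B_0 = 1$. I expect no genuine obstacle at this stage, since the heavy lifting was already done in Lemma \ref{lem-gr-0015}; the only point requiring care is the bookkeeping of the alternating signs, handled by the positivity of the monomials. The essential reason for using the representation of Corollary \ref{cor-gr-001} rather than the raw formula of Theorem \ref{thm-df-001} is that each monomial retains exactly one factor of $\bc = \dot{u}_t$ after the estimate, which is what ultimately produces a single power of the Lelong number $\nu_u(0)$ instead of the power $M_A^{n+1}$ appearing in the cruder Lemma \ref{lem-gr-001}.
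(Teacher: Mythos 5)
Your proposal is correct and follows essentially the same route as the paper: reduce to the sphere integral, expand via Corollary \ref{cor-gr-001}, discard the negative (odd-index) terms using the positivity of $\Theta_2$, $\omega_{FS}$ and $\dot u_t$, and bound each surviving monomial by Lemma \ref{lem-gr-0015} with the index substitution $k\mapsto n-k$. The paper merely writes the even/odd $n$ cases separately, which is a cosmetic difference.
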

\begin{proof}
Thanks to Theorem \ref{thm-df-001} and Corollary \ref{cor-gr-001}, 
 it is equivalent to estimate 
the following polynomial from the above  
\begin{equation}
\label{gr-008}
 \sum_{ j=0}^n  \left( \begin{array}{c} n+1 \\  j+1   \end{array} \right) 
(-1)^j  \be^{n-j} \bc^{j+1} \ba^j, 
\end{equation}
and we note that only the principle part will matter in the end.

\textbf{Step 1}. 
Drop off all the negative terms. 
More precisely, there are two cases for even and odd dimensions.
For instance, the polynomial in equation (\ref{gr-008}) 
can be estimated from the above, if $n = 2m$ is even 
\begin{equation}
\label{gr-010}
\sum_{ k=0}^m  \left( \begin{array}{c} 2m+1 \\  2k+1   \end{array} \right) 
 \be^{2m - 2k} \bc^{ 2k+1} \ba^{2k}.
\end{equation} 
On the other hand, if $n= 2m +1$ is odd, we have  
\begin{equation}
\label{gr-011}
\sum_{ k=0}^m  \left( \begin{array}{c} 2m+2 \\  2k+1   \end{array} \right) 
 \be^{2m - 2k +1} \bc^{ 2k+1} \ba^{2k}.
\end{equation} 

Then everything boils down to estimate the upper bounds 
of the polynomials in equation (\ref{gr-010}) and (\ref{gr-011}). 
In the following, we will deal with the even case first. 

\textbf{Step 2}.
For an even $n=2m$,
we apply Lemma \ref{lem-gr-0015} to equation (\ref{gr-010}) as 
\begin{eqnarray}
\label{gr-012}
&& \sum_{ k=0}^m  \left( \begin{array}{c} 2m+1 \\  2k+1   \end{array} \right) 
 \bc^{ 2k+1} \ba^{2k}  \be^{2m - 2k}
 \nonumber\\
 &\leq& 
 M^n  \sum_{ k=0}^m  \left( \begin{array}{c} 2m+1 \\  2k+1   \end{array} \right) B_{2m-2k} \bc \ba^{2m}
 + \{ \ba, \bb \}^{2m}_1.
\end{eqnarray} 
Thanks to the Stokes Theorem, 
our result follows with the dimensional constant  
$$ C_{2m}:=  \sum_{ k=0}^m  \left( \begin{array}{c} 2m+1 \\  2k+1   \end{array} \right) B_{2m-2k} .$$

\textbf{Step 3}.
For an odd $n=2m+1$,
we apply Lemma \ref{lem-gr-0015} to equation (\ref{gr-011}) as 
\begin{eqnarray}
\label{gr-013}
&& \sum_{ k=0}^m  \left( \begin{array}{c} 2m+2 \\  2k+1   \end{array} \right) 
 \bc^{ 2k+1} \ba^{2k}  \be^{2m - 2k+1}
 \nonumber\\
 &\leq& 
 M^n  \sum_{ k=0}^m  \left( \begin{array}{c} 2m+2 \\  2k+1   \end{array} \right) B_{2m-2k+1} \bc \ba^{2m+1}
 + \{ \ba, \bb \}^{2m+1}_1.
\end{eqnarray} 
Then 
our result follows from the Stokes Theorem again, 
with the dimensional constant  
$$ C_{2m+1}:=  \sum_{ k=0}^m  \left( \begin{array}{c} 2m+2 \\  2k+1   \end{array} \right) B_{2m-2k+1} .$$

Finally, it is straightforward to check that the constant $C_{n}$ is no smaller than $(n+1)$ in both cases.

\end{proof}

The inequality in Proposition \ref{prop-gr-002} is not sharp,
since all the negative terms have been dropped. 
In the following examples, 
we calculate the dimensional constant $C_n$
for small $n$'s.  

\begin{example}
\label{ex-1}
For $n=1$ odd and $m=0$, we can first compute
\begin{equation}
\label{ex-001}
 \bc^2\ba + 2 \bc\bb = 2 \be \bc - \ba\bc^2 \leq 2\be\bc, 
\end{equation}
and then it follows
\begin{eqnarray}
\label{ex-002}
 2 \be \bc &\leq& 2M (\bb + \ba\bc)
 \nonumber\\
 &=&  2M\bb + 2M\ba\bc
 \nonumber\\
 &=& \{\ba, \bb\}^1_1 + 2M\ba\bc.
\end{eqnarray}
Then we have the dimensional constant $C_1 = 2$. 
Comparing with the upper bound that we obtained in the previous work \cite{Li23}, 
there is no surprise that the estimate has been improved, 
since a more accurate 
positivity condition (Lemma \ref{lem-ps-001}) has been used. 

Moreover, we note that the above estimate is not sharp, 
since we have removed all the negative terms. In fact, 
we can obtain a better estimate as 
\begin{equation}
\label{ex-0012}
 \bc^2\ba + 2 \bc\bb \leq   \{\ba, \bb\}^1_1 + 2M\ba\bc - \ba\bc^2.
\end{equation}

\end{example}

\begin{example}
\label{ex-2}
For $n=2$ even and $m=1$, we compute 
\begin{eqnarray}
\label{ex-003}
&&\bc^3 \ba^2 + 3 \bc^2 \ba\bb + 3 \bc\bb^2
\nonumber\\
&=& \bc^3 \ba^2 - 3 \bc^2 \ba \be +3 \bc \be^2 \leq  M^2\ba^2\bc + 3 \bc \be^2,
\end{eqnarray}
and then it follows 
\begin{eqnarray}
\label{ex-004}
3 \bc \be^2 &\leq& 3 M (\bb+ \ba\bc) (\bb + M\ba)
\nonumber\\
&\leq& \{\ba,\bb \}^2_1 + 3 M \ba\bc (\be + M\ba)
\nonumber\\
&=&  \{\ba,\bb \}^2_1 + 3 M \ba\bc \be + 3 M^2\ba^2 \bc.
\end{eqnarray}
Plug equation (\ref{ex-002}) into the last line on the R.H.S. of the above equation, 
and we obtain 
\begin{equation}
\label{ex-005}
 3 \bc\be^2 \leq \{\ba,\bb \}^2_1 +   6 M^2\ba^2 \bc. 
 \end{equation}
Then we have the dimensional constant $C_2 =7$.
\end{example}

\begin{example}
\label{ex-3}
For $n=3$ odd and $m=1$, we compute 
\begin{eqnarray}
\label{ex-006}
&&\bc^4 \ba^3 + 4 \bc^3 \ba^2\bb + 6 \bc^2\ba\bb^2 + 4\bc\bb^3
\nonumber\\
&\leq&  4 M^2\ba^2 \be \bc + 4 \be^3 \bc.
\end{eqnarray}
and then it follows 
\begin{eqnarray}
\label{ex-007}
\bc \be^3 &\leq& M (\bb+ \ba\bc) (\bb + M\ba)^2
\nonumber\\
&\leq& \{\ba,\bb \}^3_1 +  M \ba\bc (\be + M\ba)^2
\nonumber\\
&=&  \{\ba,\bb \}^3_1 +  M \ba\bc \be^2 +  2 M^2\ba^2 \be\bc + M^3 \ba^3 \bc.
\end{eqnarray}
Plug equation (\ref{ex-002}) and (\ref{ex-005}) into the last line on the R.H.S. of the above equation, 
and we obtain 
\begin{equation}
\label{ex-008}
  \bc\be^3 \leq \{\ba,\bb \}^3_1 +   5 M^3\ba^3 \bc. 
 \end{equation}
Then we have the dimensional constant $C_3 =24$.
\end{example}

\subsection{The results}
For a function $u\in \cF^{\infty}(B_1)$, 
recall that we have defined the non-negative functional 
$$ I(u_t): = \int_{\bC\bP^n} (\dot{u}_t)\omega_{FS}^n, $$
for all $t <0$.
Then we can rewrite the estimate 
in Proposition \ref{prop-gr-002}  as 
\begin{equation}
\label{gr-017}
\pi^{-1} \mbox{MA}(u)(B_r) \leq C_n M_A^n (u) \cdot I(u_t),
\end{equation}
for all $r = e^t < e^{-A}$.
As a direct consequence, 
our main result follows for an $S^1$-invariant plurisubharmonic function 
that is also $C^2$-continuous outside the origin. 

\begin{theorem}
\label{thm-gr-001}
For a function $u\in \cF^{\infty}(B_1)$, 
there is a dimensional constant $C_n \geq (n+1)$ satisfying 
\begin{equation}
\label{gr-015}
\tau_u(0)\leq  C_n [\lambda_u(0)]^n \cdot \nu_u(0). 
\end{equation}
\end{theorem}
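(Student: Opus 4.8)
The plan is to derive the stated inequality from the a priori estimate \eqref{gr-017} of Proposition \ref{prop-gr-002} by a double-limit argument, sending first the radius to zero and then the distance $A$ to infinity. Recall that \eqref{gr-017} reads
\begin{equation*}
\pi^{-1}\,\mbox{MA}(u)(B_r) \leq C_n\, M_A^n(u)\cdot I(u_t),
\end{equation*}
and, crucially, it holds for \emph{every} $r=e^t<e^{-A}$ with $A>0$ fixed. The whole point is that both sides are now monotone quantities whose limits have already been identified in Section 2, so that the argument reduces to a careful interchange of limits.

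First I would fix $A>0$ and let $t\to-\infty$, equivalently $r\to 0^+$. On the left-hand side, $\mbox{MA}(u)(B_r)$ decreases as the ball shrinks (positivity of the measure), so by continuity from above together with the definition of the residual mass in \eqref{pfc-003} it converges to $\pi^{n+1}\tau_u(0)$; hence $\pi^{-1}\mbox{MA}(u)(B_r)\to \pi^{n}\tau_u(0)$. On the right-hand side, $M_A(u)$ is a fixed constant for fixed $A$, while property (iii) of the functional $I(u_t)$ gives $I(u_t)\to \pi^{n}\nu_u(0)$ as $t\to-\infty$. Passing to the limit therefore yields
\begin{equation*}
\tau_u(0)\leq C_n\, M_A^n(u)\cdot \nu_u(0).
\end{equation*}

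Next I would let $A\to+\infty$. Since $M_A(u)$ is non-negative and non-increasing in $A$, Definition \ref{def-pfc-004} gives $M_A(u)\to\lambda_u(0)$, and in fact $\lambda_u(0)=\inf_{A>0}M_A(u)$, so that passing to this outer limit produces the sharpest bound
\begin{equation*}
\tau_u(0)\leq C_n\,[\lambda_u(0)]^n\cdot\nu_u(0),
\end{equation*}
which is the claim.

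There is no serious obstacle here beyond bookkeeping: the main point that genuinely requires attention is the \emph{order} in which the two limits are taken. The inner limit $t\to-\infty$ must be performed with $A$ held fixed, so that $M_A(u)$ remains a genuine constant and one may legitimately invoke the convergence $I(u_t)\to\pi^n\nu_u(0)$; only afterwards does one exploit the definition of $\lambda_u(0)$ in the outer limit. The finiteness of $\lambda_u(0)$, guaranteed by Proposition \ref{prop-pfc-001}, ensures that the resulting right-hand side is meaningful and that no degenerate $0\cdot\infty$ situation arises.
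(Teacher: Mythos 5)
Your proof is correct and follows essentially the same route as the paper: pass to the limit $t\to-\infty$ in the estimate of Proposition \ref{prop-gr-002} with $A$ fixed, using $I(u_t)\to\pi^n\nu_u(0)$, and then let $A\to+\infty$ using Definition \ref{def-pfc-004}. The extra care you take about the order of limits and the finiteness of $\lambda_u(0)$ is exactly the bookkeeping the paper leaves implicit.
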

\begin{proof}
First take $r\rightarrow 0^+$, i.e. $t\rightarrow -\infty$, 
 on both sides of equation (\ref{gr-007}) 
and we obtain 
\begin{equation}
\label{gr-016}
\tau_u(0) \leq  C_n M^n_A(u) \nu_u(0).
\end{equation}
Then the result follows by taking $A\rightarrow +\infty$. 

\end{proof}

As a corollary, we confirm the zero mass conjecture in this special case.

\begin{cor}
\label{cor-gr-002}
For a function $u\in \cF^{\infty}(B_1)$, 
we have 
$$
\nu_u(0) = 0   \Rightarrow       \tau_u(0) = 0.
$$
\end{cor}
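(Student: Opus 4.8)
The plan is to read this implication off directly from the quantitative estimate \eqref{gr-015} of Theorem~\ref{thm-gr-001}, namely $\tau_u(0)\leq C_n[\lambda_u(0)]^n\cdot\nu_u(0)$. The only point requiring care is to avoid an indeterminate $0\cdot(+\infty)$ on the right-hand side, and this is exactly where the a priori finiteness of the maximal directional Lelong number must be used.

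First I would invoke Proposition~\ref{prop-pfc-001}, which guarantees $0\leq\lambda_u(0)<+\infty$ for every $u\in\cF(B_1)$, and in particular for $u\in\cF^{\infty}(B_1)$. Consequently $C_n[\lambda_u(0)]^n$ is a genuine finite constant, depending only on $u$ and on the dimension, so that the right-hand side of \eqref{gr-015} is a finite multiple of $\nu_u(0)$.

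Next, under the hypothesis $\nu_u(0)=0$, the right-hand side of \eqref{gr-015} vanishes, whence $\tau_u(0)\leq 0$. On the other hand the residual mass is non-negative: by its definition in \eqref{pfc-003} it equals the limit as $R\to 0^{+}$ of $\pi^{-(n+1)}\,\mbox{MA}(u)(B_R)$, and each $\mbox{MA}(u)(B_R)$ is the integral over $B_R$ of the closed positive current $(dd^c u)^{n+1}$, hence is $\geq 0$. Combining $\tau_u(0)\leq 0$ with $\tau_u(0)\geq 0$ forces $\tau_u(0)=0$, which is the assertion.

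I do not expect a genuine obstacle in the corollary itself, since its entire content has been front-loaded into the two results it invokes: the estimate \eqref{gr-015} (obtained from the decomposition formula of Theorem~\ref{thm-df-001} together with the positivity of Lemma~\ref{lem-ps-001} and the inductive combinatorial bound of Lemma~\ref{lem-gr-0015}) and the finiteness $\lambda_u(0)<+\infty$ of Proposition~\ref{prop-pfc-001}. The one thing one must not overlook is invoking that finiteness, for without it the product on the right of \eqref{gr-015} could not be concluded to collapse to zero; everything else is immediate.
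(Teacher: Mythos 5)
Your proposal is correct and matches the paper's (essentially implicit) argument: the corollary is read off from the estimate \eqref{gr-015} together with the finiteness $0\leq\lambda_u(0)<+\infty$ of Proposition \ref{prop-pfc-001} and the non-negativity of the residual mass. Your explicit attention to ruling out the indeterminate product $0\cdot(+\infty)$ is exactly the point the paper relies on when it says the conjecture ``directly follows from Proposition \ref{prop-pfc-001} and the above estimate.''
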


\bigskip

For a general function $u\in \cF(B_1)$, we can use the approximation of the functional 
by its standard regularization 
$u_{\ep}: = u*\rho_{\ep}$.
Then we are ready to prove the main theorem. 

\begin{theorem}
\label{thm-gr-002}
For a function $u\in \cF(B_1)$, 
there is a dimensional constant $C_n \geq (n+1)$ satisfying 
\begin{equation}
\label{gr-017}
\tau_u(0)\leq  2C_n [\lambda_u(0)]^n \cdot \nu_u(0). 
\end{equation}
\end{theorem}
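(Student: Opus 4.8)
The plan is to obtain the bound for a general $u\in\cF(B_1)$ from the smooth estimate of Proposition \ref{prop-gr-002} by approximating with the standard regularization $u_\ep=u*\rho_\ep$ and passing to the limit. Since $u_\ep\in\cF^\infty(B_{1-\delta})$ for all small $\ep$, fixing constants $1<A<B$ Proposition \ref{prop-gr-002} applies to $u_\ep$ and reads
\begin{equation}
\frac{1}{\pi}\int_{B_r}(dd^c u_\ep)^{n+1}\leq C_n\,M_B^n(u_\ep)\,I(u_{\ep,t}),\qquad r=e^t<e^{-B},
\end{equation}
where $I(u_{\ep,t})=\int_{\bC\bP^n}\dot u_{\ep,t}\,\omega_{FS}^n$. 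The strategy is to pass to the limit in three successive stages, controlling each factor on the right by a result already proven for the regularization: first $\ep\to0$ with $A,B$ frozen, then $t\to-\infty$ (i.e.\ $r\to0$), and finally $A\to+\infty$.

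First I would control the three ingredients uniformly in $\ep$. The a priori estimate of Lemma \ref{lem-pfc-005} gives $M_B(u_\ep)\leq 2M_A(u)+C\ep$, which is the decisive uniform bound and the source of the extra factor of $2$. By Proposition \ref{prop-pfc-002}, applied on any interval $[-\tilde B,-B]$, there is a subsequence $\{u_{\ep_k}\}$ with $I(u_{\ep_k,t})\to I(u_t)$ for almost every $t$ in that interval, and along the same subsequence Lemma \ref{lem-pfc-001} gives $\mbox{MA}(u_{\ep_k})(B_r)\to\mbox{MA}(u)(B_r)$ for almost every $r$ (any subsequence of a convergent family still converges). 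Intersecting these two full-measure sets and letting $k\to\infty$ yields, for almost every $t<-B$,
\begin{equation}
\frac{1}{\pi}\int_{B_{e^t}}(dd^c u)^{n+1}\leq 2^n C_n\,M_A^n(u)\,I(u_t).
\end{equation}
I would then send $t\to-\infty$: the left-hand mass decreases to $\mbox{MA}(u)(\{0\})=\pi^{n+1}\tau_u(0)$, while $I(u_t)$ decreases to $\pi^n\nu_u(0)$ (property (iii) of the functionals $I$), so the inequality becomes $\tau_u(0)\leq 2^n C_n\,M_A^n(u)\,\nu_u(0)$. Finally $A\to+\infty$ with $M_A(u)\to\lambda_u(0)$ gives $\tau_u(0)\leq 2^n C_n[\lambda_u(0)]^n\nu_u(0)$, which is the asserted bound after absorbing the factor $2^{n-1}$ into the dimensional constant $C_n$ (still $\geq n+1$).

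The content here is bookkeeping rather than new analysis, so the main obstacle is arranging the order of limits while keeping the exceptional null sets and parameter ranges compatible. The whole argument hinges on the bound $M_B(u_\ep)\leq 2M_A(u)+C\ep$ being uniform in $\ep$ with a fixed $A$ on the right: this is exactly what permits freezing $A$ as $\ep\to0$ and only afterwards letting $A\to+\infty$. One must also check that the levels $t$ used in the mass convergence of Lemma \ref{lem-pfc-001} and in the convergence $I(u_{\ep_k,t})\to I(u_t)$ of Proposition \ref{prop-pfc-002} can be chosen simultaneously in the admissible range $t<-B$; this is harmless, since each constraint removes only a set of measure zero, and letting $\tilde B\to+\infty$ over the intervals $[-\tilde B,-B]$ covers all such $t$ (the resulting inequality for $u$ no longer involves the subsequence, so different subsequences on different intervals cause no conflict). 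No positivity input beyond Lemma \ref{lem-ps-001}, already used in the smooth case, is needed.
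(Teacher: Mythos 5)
Your argument is correct and is essentially the paper's own proof: the same combination of Proposition \ref{prop-gr-002} applied to $u_{\ep}$, the uniform bound $M_B(u_\ep)\le 2M_A(u)+C\ep$ from Lemma \ref{lem-pfc-005}, the subsequence convergences from Lemma \ref{lem-pfc-001} and Proposition \ref{prop-pfc-002}, and the same order of limits $\ep\to0$, $t\to-\infty$, $A\to+\infty$ (the paper merely packages the $C\ep$ term via an auxiliary $\kappa$ and a window $2A<A'<B$). Your honest accounting of the resulting constant $2^nC_n$, absorbed into the dimensional constant, matches what the paper's own computation actually yields in equation \eqref{gr-018}.
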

\begin{proof}
It is enough to prove the following estimate 
\begin{equation}
\label{gr-018}
\tau_u(0)\leq  C_n \left( 2 M_A(u) + \k  \right)^n \cdot \nu_u(0), 
\end{equation}
for each $A>1$ large and $\k >0$ small. 
Take two large constants as
$$2< A < 2A< B.$$
Thanks to Lemma \ref{lem-pfc-001} and 
Proposition \ref{prop-pfc-002},
we can extract 
a subsequence $u_{\ep_k}$ from the regularization of $u$ 
satisfying 
\begin{equation}
\label{gr-019}
\mbox{MA}(u_{\ep_k})(B_r) \rightarrow \mbox{MA}(u)(B_r)
\end{equation}
and 
\begin{equation}
\label{gr-020}
I(u_{\ep_k, t}) \rightarrow I(u_t),
\end{equation}
as $k\rightarrow +\infty$ for almost all $t = \log r\in [-B, -A]$. 
Meanwhile, Lemma \ref{lem-pfc-005} implies that we have 
a uniform control of the maximal directional Lelong numbers of the regularization as 
\begin{equation}
\label{gr-021}
M_{A'}(u_{\ep}) \leq 2 M_A(u) + C\ep,   
\end{equation}
for all $ 2A< A' <B $ and $\ep< e^{-2A}/2$.
Apply Proposition \ref{prop-gr-002} to the subsequence, 
and then we obtain the estimate
\begin{eqnarray}
\label{gr-022}
\pi^{-1} \mbox{MA}(u_{\ep_k})(B_r) &\leq& C_n M_{A'}^n (u_{\ep_k}) I(u_{\ep_k, t})
\nonumber\\
&\leq& C_n  \left( 2M_A(u) + Ce^{-2A}/2 \right)^n  I(u_{\ep_k, t}),
\end{eqnarray}
for all $-2A < t< -B$ and $k$ large enough. 
Take $k\rightarrow +\infty$
on both sides of equation (\ref{gr-022}), 
and combine with equation (\ref{gr-019}) and (\ref{gr-020}). 
Hence we have the following inequality 
\begin{equation}
\label{gr-023}
\pi^n \tau_u(0) \leq  C_n  \left( 2M_A(u) + Ce^{-2A}/2 \right)^n  I(u_{t}),
\end{equation}
for almost all $t = \log r \in [-B, -2A]$.
Fix any $A > \log (C/ \k)$, and then 
equation (\ref{gr-018}) follows by taking $B\rightarrow +\infty$. 
Hence our main result follows.

\end{proof}

Finally, the zero mass conjecture directly follows from 
Proposition \ref{prop-pfc-001} and the above estimate
for an $S^1$-invariant plurisubharmoinc function.

\begin{theorem}
\label{cor-gr-003}
For a function $u\in \cF(B_1)$, 
we have 
$$
\nu_u(0) = 0   \Rightarrow       \tau_u(0) = 0.
$$
\end{theorem}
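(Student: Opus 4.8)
The plan is to obtain this implication as an immediate corollary of the main estimate in Theorem~\ref{thm-gr-002} together with the finiteness of the maximal directional Lelong number from Proposition~\ref{prop-pfc-001}; all of the analytic content has already been spent on those results, and what remains is simply to assemble them.

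First I would invoke Theorem~\ref{thm-gr-002}, which supplies a dimensional constant $C_n \geq n+1$ with
\begin{equation*}
\tau_u(0) \leq 2 C_n [\lambda_u(0)]^n \cdot \nu_u(0)
\end{equation*}
for every $u \in \cF(B_1)$. Next I would use Proposition~\ref{prop-pfc-001} to guarantee that $\lambda_u(0)$ is a finite non-negative real number, so that $[\lambda_u(0)]^n$ is a genuine finite constant. Under the hypothesis $\nu_u(0) = 0$ the right-hand side collapses to $2 C_n [\lambda_u(0)]^n \cdot 0 = 0$, whence $\tau_u(0) \leq 0$; since $\tau_u(0)$ is, by equation~\eqref{pfc-003}, the normalized mass that the positive measure $\mbox{MA}(u)$ assigns to the origin and is therefore non-negative, the two inequalities force $\tau_u(0) = 0$.

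The only subtlety worth flagging — and the reason the deduction is not vacuous — is that the finiteness $\lambda_u(0) < +\infty$ from Proposition~\ref{prop-pfc-001} is essential: without it the product $[\lambda_u(0)]^n \cdot \nu_u(0)$ would be an indeterminate $\infty \cdot 0$ and the bound would carry no information. Consequently there is no genuine obstacle remaining at this final step; the entire difficulty of the zero mass conjecture was concentrated in the right-hand inequality of Theorem~\ref{thm-gr-002}, which itself rests on the decomposition formula (Theorem~\ref{thm-df-001}), the transversal positivity (Lemma~\ref{lem-ps-001}), the combinatorial estimate (Lemma~\ref{lem-gr-0015}), and the regularization argument carrying the bound from $\cF^{\infty}(B_1)$ to $\cF(B_1)$.
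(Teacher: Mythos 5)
Your proposal is correct and is exactly the paper's argument: the authors also deduce Theorem \ref{cor-gr-003} immediately from the estimate of Theorem \ref{thm-gr-002} combined with the finiteness of $\lambda_u(0)$ from Proposition \ref{prop-pfc-001}. Your remark that finiteness of $\lambda_u(0)$ is the essential point preventing an indeterminate product matches the paper's explicit citation of that proposition.
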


We remark that the estimate in Theorem \ref{thm-gr-002} can be improved.  
With a stronger apriori estimate on the maximal directional Lelong numbers (Remark 6.7, \cite{Li23}),
we can further obtain 
\begin{equation}
\label{gr-024}
\tau_u(0)\leq  C_n [\lambda_u(0)]^n \cdot \nu_u(0),
\end{equation}
for any function $u\in \cF(B_1)$. 
In $\bC^2$ with constant $C_1=2$,
one can directly check that the inequality in 
equation (\ref{gr-024}) is satisfied for all 
$S^1$-invariant examples provided in Section 6, \cite{Li23}.

However, this estimate is still not sharp, 
since all negative terms have been removed during the estimates. 
Finally, we note that 
this control of the residual Monge-Amp\`ere mass 
(by its Lelong number and maximal directional Lelong number) fails if the function is no longer $S^1$-invariant, see Example 6.13, \cite{Li23} and \cite{CG09}.

\subsection{Energy functionals}
\label{sub-007}
Through a variational approach, 
we have another point of view to describe 
the decomposition formula (Theorem \ref{thm-df-001})
and the result of the zero mass conjecture.

In fact,
we can think this formula  to be 
the push forward 
of the complex Monge-Amp\`ere measure 
of a $u\in\cF^{\infty}(B_1)$
from the K\"ahler cone $S^{2n+1}\times \bR_+$
to its base manifold $\bC\bP^n$.
To see this, re-write equation (\ref{df-016}) as follows:

\begin{equation}
\label{ef-001}
\pi^{-1}\mbox{MA}(u)(B_r) =  \sum_{k=0}^n
 \begin{pmatrix} n+1 \\ k\end{pmatrix}  E_{n,k}(u_t),
\end{equation}
where we define the functionals as 
$$ E_{n,k}(u_t): = \int_{\bC\bP^n} (\dot{u}_t)^{n+1-k} \omega_{FS}^{n-k} \wedge (i\ddbar_{\z} u_t )^k, $$
where $\d_{\z}$, $\dbar_\z$ and $\ddbar_\z$ mean 
operators on $\bC\bP^n$. 
In particular, 
the last term $E_{n,n}$
is related to the \emph{pluri-complex energy} $\cE$ (see \cite{BB22}, \cite{BB}) in the following way:
$$ (n+1) E_{n,n} (u_t) = -\frac{d}{dt}\cE(u_t), \ \ \ \text{where} \ \ \ \cE(u): = \int_{\bC\bP^n} (-u) (i\ddbar_{\z} u)^n.$$


In the domain case,
it is a well-known fact that
the second variation of $-\cE$ is 
the push forward of the complex Monge-Amp\`ere operator. 
Hence the energy $\cE$ is a concave functional along the so called \emph{sub-geodesic ray}. 
For more details about geodesics and sub-geodesics in the space of K\"ahler potentials, 
see \cite{Don97}, \cite{Sem74}, \cite{C00} and \cite{Li22}.

In our case, it is plausible to take $u_t$ for a function $u\in \cF^{\infty}(B_1)$ as a sub-geodesic ray 
with $C^2$-regularity, 
and it is a \emph{geodesic ray} if the complex Monge-Amp\`ere mass of $u$ vanishes, i.e. we have 
$$ (dd^c u)^{n+1}=0.  $$
Then we can take the primitives of the above functionals, 
and define the following energies (up to a constant) along a sub-geodesic ray: 
$$ \frac{d}{dt} \cE_{n,k}(u_t):= - \begin{pmatrix} n+1 \\ k \end{pmatrix} E_{n,k}(u_t). $$
In particular, we take $\cE_{n,n} = \cE$. 
Then equation \eqref{ef-001} can be rewritten as 
\begin{equation}
\label{ef-0015}
\cM = -\sum_{k=0}^n \cE_{n,k}, 
\end{equation}
along a sub-geodesic ray up to a constant, where $\cM$ is a $t$-primitive of the measure $\pi^{-1}\mbox{MA}(u)(B_r)$.
It is clear now that the decomposition formula 
induces a push forward of the complex Monge-Amp\`ere operator, under the Sasakian structure of the sphere.
That is to say,
 the non-triviality of the K\"ahler cone structure
is reflected by the terms like 
$\cE_{n, k}$ for $k=0,\cdots, n-1$ in the formula. 
Furthermore, 
it implies the following concavity of the energy. 

\begin{cor}
\label{cor-ef-001}
Along a sub-geodesic ray $u_t$ with $C^2$-regularity, 
the energy functional 
$ \sum_{k=0}^n \cE_{n, k} $
is concave. Moreover, 
it is affine if and only if $u_t$ is a geodesic ray. 
\end{cor}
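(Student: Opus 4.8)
The plan is to extract both statements directly from the identity \eqref{ef-0015}. Writing $\phi(t):=\pi^{-1}\mbox{MA}(u)(B_{e^t})$, that identity says $\sum_{k=0}^n\cE_{n,k}=-\cM$ up to an additive constant, where $\cM$ is a $t$-primitive of $\phi$, i.e. $\frac{d}{dt}\cM=\phi$. Hence concavity of $\sum_{k=0}^n\cE_{n,k}$ is equivalent to convexity of $\cM$, which in turn is equivalent to $\phi$ being non-decreasing; and $\sum_{k=0}^n\cE_{n,k}$ is affine if and only if $\cM$ is affine, that is, if and only if $\phi$ is constant. Thus everything reduces to the behaviour of the single function $\phi$.

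The key observation is that $\phi$ is non-decreasing. Indeed $(dd^cu)^{n+1}$ is a closed positive, hence non-negative, measure, so for $r_1\le r_2$ the inclusion $B_{r_1}\subseteq B_{r_2}$ gives $\mbox{MA}(u)(B_{r_1})\le\mbox{MA}(u)(B_{r_2})$; composing with the increasing change of variable $r=e^t$ shows $\phi$ is non-decreasing in $t$. Since a primitive of a non-decreasing function is convex, $\cM$ is convex, and therefore $\sum_{k=0}^n\cE_{n,k}=-\cM+\text{const}$ is concave, which is the first assertion.

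For the rigidity statement I would characterise the affine case through constancy of $\phi$. For $u\in\cF^{\infty}(B_1)$ the measure $(dd^cu)^{n+1}$ is continuous on the punctured ball, so one may split $\mbox{MA}(u)(B_r)=\mbox{MA}(u)(\{0\})+\int_{B_r^*}(dd^cu)^{n+1}$, and the continuous part tends to $0$ as $r\to 0^+$. Constancy of $\phi$ therefore forces $\int_{B_r^*}(dd^cu)^{n+1}=0$ for every $r$, hence $(dd^cu)^{n+1}=0$ on $B_1^*$; conversely, this vanishing reduces $\mbox{MA}(u)(B_r)$ to the Dirac mass at the origin, which is constant in $r$. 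By the definition recalled just before the corollary, $(dd^cu)^{n+1}=0$ on $B_1^*$ is precisely the condition that $u_t$ be a geodesic ray, which closes the equivalence.

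The main obstacle is one of regularity and bookkeeping rather than of substance: one must ensure that $\cM$ is a genuine primitive of $\phi$ in a sense that legitimately converts monotonicity of $\phi$ into convexity of $\cM$, and that separating the residual Dirac mass at the origin from the continuous part on $B_1^*$ is clean enough to make constancy of $\phi$ equivalent to the vanishing of $(dd^cu)^{n+1}$ away from the origin. For $u\in\cF^{\infty}(B_1)$ the continuity of the Monge-Amp\`ere density off the origin renders these points routine, so the real content of the corollary is just the positivity of the Monge-Amp\`ere measure fed into the decomposition identity \eqref{ef-0015}.
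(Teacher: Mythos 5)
Your argument is correct and is exactly the intended one: the paper itself states this corollary without proof (deferring to the two-dimensional case in \cite{Li23}), but the mechanism it has in mind is precisely what you use, namely that equation \eqref{ef-0015} identifies $-\sum_{k}\cE_{n,k}$ with a $t$-primitive of $r\mapsto\pi^{-1}\mbox{MA}(u)(B_r)$, which is non-decreasing by positivity of the Monge--Amp\`ere measure, and is constant precisely when $(dd^cu)^{n+1}$ vanishes on the punctured ball, i.e.\ when $u_t$ is a geodesic ray. Your handling of the residual Dirac mass at the origin (so that constancy of the mass function only forces vanishing on $B_1^*$, which is where the ray lives) is the right reading of the paper's definition of a geodesic ray.
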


The two dimensional version of the above Corollary has been 
proved in Section 7, \cite{Li23}, 
and this is its higher dimensional analogue.

Finally, we would like to point out that the first term $E_{n,0}$ in equation (\ref{ef-001}) 
also has a meaning. 
It is in fact a kind of $L^{p}$-Lelong number for $p=n+1$,
since we have the following convergence due to Lemma \ref{lem-pfc-003}: 
\begin{equation}
\label{ef-002}
\lim_{t\rightarrow -\infty} E_{n,0} (u_t) =  [\nu_u(0)]^{n+1}. 
\end{equation}
Recall that we introduce a primitive of the functional $I$ as 
$$ \cI(u_t): = \int_{\bC\bP^n} u_t \omega_{FS}^n. $$
Thanks to equation \eqref{ef-0015} and (\ref{ef-002}), we can rephrase the zero mass conjecture 
in terms of the energies. 

\begin{cor}
\label{cor-ef-002}
Along a sub-geodesic ray $u_t$ with $C^2$-regularity, 
assume that 
the asymptote of
the functional $\cI$ at $-\infty$ is zero.  
Then we have 
\begin{equation}
\label{ef-003}
\lim_{t\rightarrow -\infty}\frac{d}{dt} \cM(u_t)
=\lim_{t\rightarrow -\infty}\sum_{k=1}^n \frac{d}{dt} \cE_{n,k}(u_t)  =0.
\end{equation}
\end{cor}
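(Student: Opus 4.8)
The plan is to read this corollary as the translation of the zero mass conjecture (Theorem \ref{cor-gr-003}) into the language of the energies $\cE_{n,k}$, so that the proof reduces to identifying the limits of the individual terms in the decomposition \eqref{ef-001} and then quoting the result already established.

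First I would reinterpret the hypothesis. By property \textbf{(i)} the convex, non-decreasing function $\cI(u_t)$ is a primitive of $I(u_t)$ along $t$, so the asymptotic slope of $\cI$ at $-\infty$ equals $\lim_{t\to-\infty}I(u_t)=\pi^n\nu_u(0)$ by property \textbf{(iii)}. Thus the assumption that this asymptote vanishes is precisely $\nu_u(0)=0$, and Theorem \ref{cor-gr-003} immediately gives $\tau_u(0)=0$.

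It then remains to evaluate the two limits. Since $\cM$ is a $t$-primitive of $\pi^{-1}\mbox{MA}(u)(B_r)$, one has $\frac{d}{dt}\cM(u_t)=\pi^{-1}\mbox{MA}(u)(B_{e^t})$; letting $t\to-\infty$ and using the definition \eqref{pfc-003} of the residual mass yields $\lim_{t\to-\infty}\frac{d}{dt}\cM(u_t)=\pi^n\tau_u(0)=0$, which is the first equality. For the second I would isolate the $k=0$ term of \eqref{ef-001}: because $\frac{d}{dt}\cE_{n,k}=-\binom{n+1}{k}E_{n,k}$ and $\binom{n+1}{0}=1$,
\begin{equation}
\sum_{k=1}^{n}\frac{d}{dt}\cE_{n,k}(u_t)=-\sum_{k=1}^{n}\binom{n+1}{k}E_{n,k}(u_t)=E_{n,0}(u_t)-\frac{d}{dt}\cM(u_t).
\end{equation}
Passing to the limit and combining \eqref{ef-002} with the value just computed, the right-hand side tends to $[\nu_u(0)]^{n+1}-\pi^n\tau_u(0)=0$, which is the second equality.

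I expect no genuine analytic obstacle: once Theorem \ref{cor-gr-003} is available the argument is pure bookkeeping. The one point that needs care is that I must not pass to the limit in each $E_{n,k}$ separately, since their individual convergence is not claimed, but only in the combination $\sum_{k=1}^{n}\binom{n+1}{k}E_{n,k}=\frac{d}{dt}\cM-E_{n,0}$, both of whose summands have known limits. Correctly identifying the hypothesis with $\nu_u(0)=0$ is the only conceptual step.
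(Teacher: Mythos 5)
Your proof is correct and follows exactly the route the paper intends (the paper leaves the argument implicit, pointing only to equations \eqref{ef-0015} and \eqref{ef-002}): identify the vanishing asymptote of $\cI$ with $\nu_u(0)=0$, invoke the zero mass theorem to get $\tau_u(0)=0$, and read off both limits from the decomposition formula, isolating the $k=0$ term via \eqref{ef-002}. Your remark that one should only pass to the limit in the combination $\frac{d}{dt}\cM - E_{n,0}$ rather than in each $E_{n,k}$ separately is a correct and worthwhile precaution.
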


It will be interesting to see if Corollary \ref{cor-ef-002}
also holds for a function $u$ in the family $\cF(B_1)$, 
i.e. along a bounded sub-geodesic ray. 
However, 
there is no a priori reason that these energies $\cE_{n, k}$ are well defined in that case.

\bigskip
\bigskip

\section{The method of moving frames}
\smallskip

In this section,  we are going to provide 
an alternative proof of the decomposition formula (Theorem \ref{thm-df-001})
via the method of moving frames, see \cite{C87}, \cite{Chern}.
Moreover, the complex hessian equation of a function without symmetry will be presented. 

In the language of moving frames,
a real coordinate $(x^A, y^A)$ of a point $p\in \bR^{2n+2}$
should be interpreted as a vector 
$$ x^A  \frac{\d}{\d x^A} + y^A \frac{\d}{\d y^A} \in T_p \bR^{2n+2}, $$
Then a complex coordinate $(z^A)$ corresponds to a vector in $T_p^{1,0}(\bC^{n+1})$,
and its complex conjugate $(\bar{z}^A)$ is a vector in $T_p^{0,1}(\bC^{n+1})$.
In the following, we are going to use the Einstein summation convention. 

\subsection{The structure equations}
Let $\{e_0,\cdots, e_n \}$ be a unitary field of $\bC^{n+1}$, 
which means that we have for $A, B = 0,\cdots, n$
\begin{equation}
\label{mf-001}
(e_A\ , \ e_B) = \delta_{AB},
\end{equation}
where the hermitian inner product is taken on $\bC^{n+1}$.
Taking the exterior derivative of the position vector $z$, we can write 
\begin{equation}
\label{mf-002}
dz = \omega^A e_A; \ \ \ \  \omega^A = a^{A}_B\cdot dz^B,
\end{equation}
where $(\omega^A)$ is a vector-valued $(1,0)$-form,
and $(a^A_B)$ is a unitary matrix with coefficients as smooth functions in $\bC^{n+1}$.
Then the Euclidean metric of $\bC^{n+1}$ is given as 
$$ ds^2_e =  \omega^A \overline{\omega^A}. $$
Hence it follow from equation (\ref{mf-001}) and (\ref{mf-002}) as 
\begin{equation}
\label{mf-003}
de_A = \omega^B_A e_B; \ \ \ \  \omega^B_C = a^B_A \cdot d \left(\overline{a^C_A}\right),
\end{equation}
where $(\omega^A_B)$ is a matrix-valued $1$-form on $\bC^{n+1}$, 
and then we have 
\begin{equation}
\label{mf-004}
\omega^A_B + \overline{\omega^B_A} =0.
\end{equation}
Taking exterior derivatives on both sides of equation (\ref{mf-003}) and (\ref{mf-002}), 
we obtain the following structure equations 
\begin{align}
\label{mf-005}
 d\omega^A  + \omega^A_B \wedge \omega^B = 0; 
 \\
 \label{mf-006}
 d \omega^A_B + \omega^A_C \wedge \omega^C_B =0.
\end{align}
The above equations reflect the fact that the Euclidean space is flat. 
Moreover, we can decompose the differential of a smooth function $u$ as 
$$ du = \d u + \dbar u, $$
where 
$$ \d u = u_A  \omega^A ; \ \ \  \dbar u =  u_{\bar A}\overline{\omega^A}; \ \ \  u_{\bar A} = \overline{(u_A)}. $$

\subsection{The Fubini-Study metric}
Next we set the first element in the frame as the 
unit vector towards the point: 
$$ e_0:=  e^{-t} z, $$
where the variable is $t: =\log |z|$.
Take exterior derivatives, and we obtain 
\begin{equation}
\label{mf-007}
dz = e^t (dt + \omega^0_0 )\cdot e_0 + e^t \omega_0^{\a} \cdot e_{\a},
\end{equation}
for all $\a = 1, \cdots, n$. 
Then it follows 
\begin{equation}
\label{mf-008}
\omega^0 = e^t (dt + \omega^0_0); \ \ \ \ \  \omega^{\a} = e^t \omega^{\a}_0. 
\end{equation}

Fixing such an $e_0$, 
a local unitary frame field
$(e_0, e_1, \cdots, e_n)$ is in fact a local section 
of the principle bundle $U(n+1)$ over $\bC\bP^n$. 
That is to say, we have the projection map
$$ \pi: U(n+1) \rightarrow \bC\bP^n, $$
defined as
$$(e_0, e_1, \cdots, e_n ) \rightarrow [e_0], $$ 
where $[e_0]$ is the equivalent class of $z$ in $\bC\bP^n$. 
It is clear that each fiber of this projection map
is isomorphic to $U(1)\times U(n)$, 
and hence a local section of this principle bundle 
also gives such a frame field. 

Moreover, the Fubini-Study metric on $\bC\bP^n$ 
can be given by a direct computation as  
\begin{equation}
\label{mf-009}
ds^2_{FS} = \theta^{\a} \overline{\theta^{\a}}, \ \ \ \ \ \theta^{\a} = \omega_0^{\a}.
\end{equation}
Then it follows 
\begin{equation}
\label{mf-010}
\theta^{\a} = e^{-t} \omega^{\a},
\end{equation}
and the $(1,0)$-forms $\{ \theta^{\a}, 1\leq \a \leq n \}$ builds a local coframe field on $\bC\bP^n$. 
In fact, the $1$-form $\omega^0_0$ exactly corresponds to the contact $1$-form in Sasakian geometry, 
and the coframe field gives the decomposition of the complex structure 
under the K\"ahler cone structure of $(\bC^{n+1})^*$.

Take exterior derivatives on equation (\ref{mf-010}), 
and then we obtain the structure equation for this coframe field: 
\begin{eqnarray}
\label{mf-011}
d\theta^{\a} &=& \omega_0^0\wedge \omega_0^{\a} + \omega_0^{\b}\wedge\omega^{\a}_{\b}
\nonumber\\
&=& - (\omega^{\a}_{\b} - \omega_0^0\delta^{\a}_{\b})\wedge \omega_0^{\b}
\nonumber\\
&=& - \theta^{\a}_{\b} \wedge \theta^{\b},
\end{eqnarray}
where 
$ \theta^{\a}_{\b}: = \omega^{\a}_{\b} - \omega_0^0\delta^{\a}_{\b}$ satisfies 
$$ \theta^{\a}_{\b} + \overline{\theta^{\b}_\a} =0.  $$
This is the connection $1$-forms with respect to the coframe field $\{ \theta^{\a}, 1\leq \a \leq n\}$.
For a smooth function $v$ on $\bC\bP^n$, we can write as follows:
$$ \d v = v_{\a} \theta^{\a}; \ \ \ \dbar v = v_{\bar\a} \overline{\theta^{\a}}; \ \ \  \overline{(v_{\a})} = v_{\bar\a}. $$
Then we infer 
\begin{eqnarray}
\label{mf-012}
\ddbar v &=& d (v_{\bar\a} \overline{\theta^{\a}}) 
\nonumber\\
&=& \left( \ d v_{\bar\a} - v_{\bar\b} \overline{\theta^{\b}_{\a}} \ \right) \wedge \overline{\theta^{\a}}
\nonumber\\
&=& \nabla v_{\bar\a} \wedge \overline{\theta^{\a}},
\end{eqnarray}
where we put 
\begin{eqnarray}
\label{mf-013}
\nabla v_{\bar\a}:&=& d v_{\bar\a} - v_{\bar\b} \overline{\theta^{\b}_{\a}} 
\nonumber\\
 &=& v_{\gamma\bar\a} \theta^{\gamma} + v_{\bar{\gamma}\bar\a} \overline{\theta^{\gamma} }.
\end{eqnarray}
Thanks to the symmetry of $v_{\bar{\gamma}\bar\a} $, 
we obtain the complex hessian of $v$ as  
\begin{equation}
\label{mf-014}
i\ddbar v = i v_{ \gamma\bar\a}  \theta^{\gamma} \wedge \overline{\theta^{\a}}.
\end{equation}

\subsection{The complex hessian}
Take a unitary frame field $\{e_{A}, 0\leq A \leq n\}$
on $\bC^{n+1}$ satisfying 
$$e^t e_0 = z. $$
Then we can compute for a function $u\in \cF^{\infty}(B_1)$ as follows: 
\begin{eqnarray}
\label{mf-015}
\dbar u &=& u_{\bar{0}} \overline{\omega^0} + u_{\bar\a} \overline{\omega^{\a}}
\nonumber\\
&=& e^t \left\{   u_{\bar 0} (dt - \omega^0_0) + u_{\bar\a} \overline{\theta^{\a}}    \right\}.
\end{eqnarray}
Thanks to equation (\ref{mf-010}), 
we can decompose the exterior derivative on $\bC^{n+1}$ locally as 
\begin{align}
\label{mf-016}
du_{\bar 0} = u_{0, \bar 0}\omega^0 e^{-t} + u_{\bar 0, \bar 0} \overline{\omega^0} e^{-t} + d^T u_{\bar 0};
\\
du_{\bar \a} = u_{0, \bar\a}\omega^0 e^{-t} + u_{\bar 0, \bar\a} \overline{\omega^0} e^{-t} + d^T u_{\bar\a},
\end{align}
where $d^T$ denotes the exterior derivative in the transversal direction $\bC\bP^n$. 
Hence the complex hessian of $u$ can be calculated as 
\begin{eqnarray}
\label{mf-017}
\ddbar u &=& d (\dbar u)
\nonumber\\
&=& e^t dt \wedge \left( - u_{\bar 0} \omega^0_0 + u_{\bar\a} \overline{\theta^{\a}} \right)
\nonumber\\
&+& e^t \left\{   d u_{\bar 0} \wedge (dt - \omega^0_0) +  u_{\bar 0} \theta^{\b}\wedge \overline{\theta^{\b}}   \right\}
\nonumber\\
&+& e^t  d(u_{\bar\a} \overline{\theta^{\a}}).
\end{eqnarray}
Here we note that $u_{\bar 0} = u_{0}$ is real since the function $u$ is $S^1$-invariant, 
and then it follows on the hypersphere $S_r$ 
\begin{equation}
\label{mf-018}
du_{\bar 0}|_{S_r} = d^T u_{\bar 0};\  \  \ \ \ 
du_{\bar \a}|_{S_r} = d^T u_{\bar\a}.
\end{equation}
Hence we have the restriction 

\begin{equation}
\label{mf-019}
i \ddbar u|_{S_r} = i e^t \left\{  ( u_{\bar 0}\delta_{\a\b} + u_{\a\bar\b}) \theta^{\a}\wedge\overline{\theta^{\b}} 
+ d^T u_{\bar 0} \wedge \overline{\omega^0_0} \right\},
\end{equation}
and this is the same formula in our Lemma \ref{lem-df-001}. 
Moreover, it is clear to have 
\begin{equation}
\label{mf-020}
d^c u = i e^t \left \{  u_{\bar 0} \overline{\omega^0_0}   +  \frac{1}{2} \left( u_{\bar\a} \overline{\theta^{\a}} - u_{\a}\theta^{\a} \right)  \right\}
\end{equation}
Then it is the same way to compute the $(2n+1)$-form 
$$ d^c u \wedge (dd^c u|_{S_r})^n, $$
and the decomposition formula follows by
taking the integral on $\bC\bP^n \times \{ t\}$ and perform the integration by parts. 

\subsection{No symmetry}
When there is no symmetry of the function, the situation becomes more interesting. 
Then equation (\ref{mf-018}) fails,
and we need to directly compute from equation (\ref{mf-017}).
First we have 
\begin{align}
\label{ns-001}
dt = \frac{1}{2} e^{-t} (\omega^0 + \overline{\omega^0});
\\
\omega^0_0 = \frac{1}{2} e^{-t} (\omega^0 - \overline{\omega^0}),
\end{align}
and then it follows 
\begin{equation}
\label{ns-002}
dt\wedge \omega^0_0 = -\frac{1}{2} e^{-2t} \omega^0 \wedge \overline{\omega^0}.
\end{equation}

Hence the first term on the R.H.S. of equation (\ref{mf-017}) 
is equal to the following: 
\begin{equation}
\label{ns-003}
\frac{1}{2} e^{-t} u_{\bar 0} \omega^0 \wedge \overline{\omega^0} 
+ \frac{1}{2}  u_{\bar\a} \omega^0 \wedge \overline{\theta^{\a}} 
+ \frac{1}{2}  u_{\bar\a} \overline{\omega^0} \wedge \overline{\theta^{\a}}. 
\end{equation} 
The second term can be computed as 
\begin{eqnarray}
\label{ns-004}
&&du_{\bar 0} \wedge (dt - \omega^0_0)e^t 
\nonumber\\
&=& ( u_{ 0, \bar 0} \omega^0 e^{-t} + u_{ \bar 0, \bar 0} \overline{\omega^0} e^{-t} + d^T u_{\bar 0}) \wedge \overline{\omega^0}
\nonumber\\
&=& e^{-t} u_{0, \bar 0} \omega^0 \wedge \overline{\omega^0}  + u_{\a, \bar 0} \theta^{\a} \wedge \overline{\omega^0} 
+ u_{\bar\a, \bar 0} \overline{\theta^{\a}} \wedge \overline{\omega^0},
\end{eqnarray}
where we put 
\begin{equation}
\label{ns-005}
d^T u_{\bar 0}= u_{\a, \bar 0} \theta^{\a} 
+ u_{\bar\a, \bar 0} \overline{\theta^{\a}}.
\end{equation} 
Moreover, we can compute the third term as 
\begin{eqnarray}
\label{ns-006}
&&e^t d(u_{\bar\a} \overline{\theta^{\a}}) = e^t \left( du_{\bar\a} \wedge \overline{\theta^{\a}} + u_{\bar\a} d\overline{\theta^{\a}} \right)
\nonumber\\
&=& ( u_{0, \bar\a} \omega^0   + u_{\bar 0, \bar\a} \overline{\omega^0}  ) \wedge \overline{\theta^{\a}}
 +  e^t (d^T u_{\bar\a} + u_{\bar\b} \overline{\theta^{\b}_{\a}} ) \wedge \overline{\theta^{\a}}
 \nonumber\\
 &=&   u_{0, \bar\a} \omega^0 \wedge \overline{\theta^{\a}}  + u_{\bar 0, \bar\a} \overline{\omega^0} \wedge \overline{\theta^{\a}}
 + e^t u_{\gamma\bar\a} \theta^{\gamma} \wedge \overline{\theta^{\a}}.
\end{eqnarray}
Combining with equation (\ref{ns-003}), (\ref{ns-004}) and (\ref{ns-006}), 
we obtain the complex hessian as 
\begin{eqnarray}
\label{ns-007}
\ddbar u &=&  e^{-t} \left(  u_{0, \bar 0} + \frac{1}{2} u_{\bar 0}  \right) \omega^0 \wedge \overline{\omega^0}
\nonumber\\
&+& \left(  u_{0, \bar\a} + \frac{1}{2} u_{\bar\a}   \right)   \omega^0 \wedge  \overline{\theta^{\a}}
+  u_{\a, \bar 0} {\theta^{\a}}\wedge \overline{\omega^0}
\nonumber\\
&+&   \left(  u_{\bar\a, \bar 0}  - u_{\bar 0, \bar\a} - \frac{1}{2} u_{\bar\a} \right) \overline{\theta^{\a}}\wedge \overline{\omega^0}
\nonumber\\
&+&   e^t  ( u_{\bar 0}\delta_{\a\b} + u_{\a\bar\b}) \theta^{\a}\wedge\overline{\theta^{\b}}.
\end{eqnarray}
Since the complex hessian is a $(1,1)$-form, it follows 
\begin{equation}
\label{ns-008}
 u_{\bar\a, \bar 0}  = u_{\bar 0, \bar\a} + \frac{1}{2} u_{\bar\a}.
\end{equation} 
Moreover, we can use the identity $ - \ddbar u = \dbar\d u$ to obtain 
the following commutation relations: 
\begin{align}
\label{ns-009}
 u_{\bar 0} \delta_{\a\b} + u_{\a\bar\b} = u_0 \delta_{\a\b} + u_{\bar\b\a};
 \\
 u_{ 0, \bar 0} + \frac{1}{2} u_{\bar 0} =  u_{\bar 0, 0} + \frac{1}{2} u_{0};
 \\
  u_{\a, 0}  = u_{0, \a} + \frac{1}{2} u_{\a},
  \\
 u_{ \bar\a, 0} = u_{0, \bar\a} + \frac{1}{2} u_{\bar\a} ; \ \ \ \   u_{\a, \bar 0} = u_{\bar 0, \a} + \frac{1}{2} u_{\a}.
\end{align}

Finally, we end up with the following complex hessian equation for a general function.
\begin{theorem}
\label{thm-ns-001}
For a smooth function $u$ on $(\bC^{n+1})^*$, 
its complex hessian can be decomposed as 
\begin{eqnarray}
\label{ns-010}
\ddbar u &=&  e^{-t} \left(  u_{ 0, \bar 0} + \frac{1}{2} u_{\bar 0}  \right) \omega^0 \wedge \overline{\omega^0}
\nonumber\\
&+&  u_{\bar\a, 0}  \omega^0 \wedge  \overline{\theta^{\a}}
+  u_{ \a, \bar 0}   {\theta^{\a}}\wedge \overline{\omega^0} 
\nonumber\\
&+&   e^t  ( u_{\bar 0}\delta_{\a\b} + u_{\a\bar\b})   \theta^{\a}\wedge\overline{\theta^{\b}}.
\end{eqnarray}
In particular, we have
\begin{equation}
\label{ns-011}
 u_{ 0, \bar 0} + \frac{1}{2} u_{\bar 0} \geq 0 \ \ \ 
\text{and}\ \ \
( u_{\bar 0}\delta_{\a\b} + u_{\a\bar\b})  ( i\theta^{\a}\wedge\overline{\theta^{\b}} ) \geq 0,
\end{equation}
 if $u$ is further plurisubharmonic.
 
\end{theorem}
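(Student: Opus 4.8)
The plan is to assemble the moving-frame computation already carried out in equations \eqref{ns-003}--\eqref{ns-007} into the stated normal form \eqref{ns-010}, and then to read the positivity \eqref{ns-011} off from plurisubharmonicity. The starting point is the expression \eqref{mf-015} for $\dbar u$ in the adapted unitary frame with $e_0=e^{-t}z$; applying $d$ to it gives \eqref{mf-017}, and substituting the structure equation \eqref{mf-011} together with the splitting \eqref{ns-001} of $dt$ and $\omega^0_0$ produces the raw decomposition \eqref{ns-007}. This contains five groups of terms: a $\omega^0\wedge\overline{\omega^0}$ term, the two mixed terms $\omega^0\wedge\overline{\theta^{\a}}$ and $\theta^{\a}\wedge\overline{\omega^0}$, a spurious $(0,2)$ term proportional to $\overline{\theta^{\a}}\wedge\overline{\omega^0}$, and the transversal term $\theta^{\a}\wedge\overline{\theta^{\b}}$.

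First I would eliminate the $(0,2)$ term by a type argument: since $u$ is a function, $\ddbar u$ is of pure type $(1,1)$, so the coefficient of $\overline{\theta^{\a}}\wedge\overline{\omega^0}$ in \eqref{ns-007} must vanish identically, which is precisely the relation \eqref{ns-008}. Next I would rewrite the coefficient $u_{0,\bar\a}+\tfrac12 u_{\bar\a}$ of $\omega^0\wedge\overline{\theta^{\a}}$ as $u_{\bar\a,0}$ using the commutation relation $u_{\bar\a,0}=u_{0,\bar\a}+\tfrac12 u_{\bar\a}$ from \eqref{ns-009}, leaving the coefficient $u_{\a,\bar0}$ of $\theta^{\a}\wedge\overline{\omega^0}$ untouched. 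Collecting the surviving terms yields exactly \eqref{ns-010}.

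The hard part is establishing the commutation relations \eqref{ns-009}, which encode the half-integer shifts produced by the non-holomorphic frame $e_0=e^{-t}z$. I would derive them by computing $\d u$ in the same frame, applying $\dbar$, and comparing with $-\ddbar u$ via the identity $\ddbar u=-\dbar\d u$ (itself a consequence of $d^2u=0$ together with the vanishing of the pure-type components $\d^2 u$ and $\dbar^2 u$); matching the coefficients of each basis two-form then forces the listed relations. The $\tfrac12$-corrections arise entirely from the contact part $\omega^0_0=\tfrac12 e^{-t}(\omega^0-\overline{\omega^0})$ that appears when $e_0$ and the coframe $\theta^{\a}$ are differentiated, so that keeping careful track of these connection contributions, rather than any deep input, is the only delicate point.

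Finally, for the positivity I would regard \eqref{ns-010} as $i\ddbar u=i\,H_{A\bar B}\,\Phi^A\wedge\overline{\Phi^B}$ with $\Phi^0=\omega^0$, $\Phi^{\a}=\theta^{\a}$, and Hermitian coefficient matrix $H$ with entries $H_{0\bar0}=e^{-t}(u_{0,\bar0}+\tfrac12 u_{\bar0})$, $H_{0\bar\a}=u_{\bar\a,0}$, $H_{\a\bar0}=u_{\a,\bar0}$ and $H_{\a\bar\b}=e^t(u_{\bar0}\delta_{\a\b}+u_{\a\bar\b})$, Hermiticity of $H$ following from reality of the $(1,1)$-form $i\ddbar u$. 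If $u$ is plurisubharmonic then $H$ is positive semi-definite, and since every principal submatrix of a positive semi-definite Hermitian matrix is again positive semi-definite, the single diagonal entry $H_{0\bar0}$ gives $u_{0,\bar0}+\tfrac12 u_{\bar0}\ge 0$ (as $e^{-t}>0$), while the transversal block $(u_{\bar0}\delta_{\a\b}+u_{\a\bar\b})$ is positive semi-definite; these are the two inequalities asserted in \eqref{ns-011}.
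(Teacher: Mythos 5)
Your proposal is correct and follows essentially the same route as the paper: assembling the three groups of terms \eqref{ns-003}, \eqref{ns-004}, \eqref{ns-006} into \eqref{ns-007}, killing the spurious $(0,2)$ piece by the type argument to obtain \eqref{ns-008}, deriving the commutation relations \eqref{ns-009} from $-\ddbar u=\dbar\d u$, and then rewriting the mixed coefficient $u_{0,\bar\a}+\tfrac12 u_{\bar\a}$ as $u_{\bar\a,0}$ to reach \eqref{ns-010}. Your justification of \eqref{ns-011} via positive semi-definiteness of the Hermitian coefficient matrix in the coframe $\{\omega^0,\theta^{\a}\}$ is the standard argument the paper leaves implicit, and it is sound.
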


\bigskip
\bigskip

\bigskip

\begin{bibdiv}
\begin{biblist}

\bib{ACH19}{article}{
   author={ \AA hag, P.},
   author={Cegrell, U. },
   author={Hiep, P.-H.},
   title={On the Guedj-Rashkovskii conjecture},
   journal={Ann. Polon. Math.},
   volume={123},
   date={2019},
   number={},
   pages={15-20},
}

\bib{BT0}{article}{
   author={Bedford, E.}
   author={Talyor, A.},
   title={The Dirichlet Problem for a Complex Monge-Amp\`ere equation},
   journal={Inventiones math.},
   volume={37},
   date={1976},
   number={},
   pages={1-44},
}

\bib{BT}{article}{
   author={Bedford, E.}
   author={Talyor, A.},
   title={A new capacity for plurisubharmonic functions},
   journal={Acta Math.},
   volume={149},
   date={1982},
   number={},
   pages={1-41},
}

\bib{BB22}{article}{
  author={Berman, R.}
   author={Berndtsson, B.},
   title={Moser-Trudinger type inequalities for complex Monge-Amp\`re operators and Aubin's ``hypoth\`ese fondamentale".},
   journal={Annales de la Facult\'e des sciences de Toulouse : Math\'ematiques, Serie 6, },
   volume={31},
   date={2022},
   number={},
   pages={595-645},
  }

\bib{BB}{article}{
  author={Berman, R.}
   author={Berndtsson, B.},
   title={Plurisubharmonic functions with symmetry},
   journal={ Indiana Univ. Math. J.},
   volume={63},
   date={2014},
   number={},
   pages={345-365},
}


\bib{BFJ07}{article}{
   author={Boucksom, S.},
   author={Favre, C. },
   author={Jonsson, M.},
   title={Valuations and plurisubharmonic singularities.},
   journal={Publ. Res. Inst. Math. Sci.},
   volume={44},
   date={2008},
   number={2},
   pages={449-494},
}

\bib{BG08}{article}{
   author={Boyer, C.P.},
   author={Galicki, K.},
   title={Sasakian Geometry},
   journal={Oxford Math. Monogr.},
   volume={},
   date={2008},
   number={},
   pages={},
}


\bib{Ceg98}{article}{
   author={Cegrell, U.}, 
   title={Pluricomplex energy},
   journal={Acta Math.},
   volume={180},
   date={1998},
   number={},
   pages={187-217},
}

\bib{Ceg02}{article}{
   author={Cegrell, U.}, 
   title={Explicit calculation of a Monge-Amp\`ere operator},
   journal={Actes des Rencontres d'Analyse Complexe (Poitiers-Futuroscope, 1999), Atlantique, Poitiers,},
   volume={},
   date={2002},
   number={},
   pages={39-42},
   }

\bib{Ceg04}{article}{
   author={Cegrell, U.}, 
   title={The general definition of the complex Monge-Amp\`ere operator},
   journal={Ann. Inst. Fourier (Grenoble)},
   volume={54},
   date={2004},
   number={1},
   pages={159-179},
}

\bib{C00}{article}{
   author={Chen, X.-X.}, 
   title={The space of K\"ahler metrics},
   journal={J. Differential Geometry.},
   volume={56},
   date={2000},
   number={},
   pages={189-234},
}

\bib{C87}{article}{
   author={Chern, S. S.},
   author={Wolfson, J. G.},
   title={Harmonic maps of the two-sphere into a complex Grassmann manifold II},
   journal={Ann. Math. },
   volume={125},
   date={1987},
   number={},
   pages={301-335},
}

\bib{Chern}{article}{
   author={Chern, S. S.},
   author={Chen, W. H.},
   author={Lam, K. S.}, 
   title={Lectures on Differential Geometry},
   journal={},
   volume={},
   date={},
   number={},
   pages={},
}

\bib{CG09}{article}{
   author={Coman, D.},
   author={Guedj, V.},
   title={Quasiplurisubharmonic Green functions.},
   journal={J. Math. Pures Appl. },
   volume={92},
   date={2009},
   number={},
   pages={456-475},
}

\bib{Dem93}{article}{
   author={Demailly, J.P.}, 
   title={Monge-Amp\`ere operators, Lelong numbers and intersection theory},
   journal={Complex analysis and geometry, Univ. Ser. Math., Plenum, New York,},
   volume={},
   date={1993},
   number={},
   pages={115-193},
}

\bib{GZ15}{article}{
   author={Dinew, S.}
   author={Guedj, V.},
   author={Zeriahi, A.},
   title={Open problems in pluripotential theory},
   journal={arXiv: 1511.00705},
   volume={},
   date={},
   page={}
}



\bib{Don97}{article}{
   author={Donaldson, S.K.}, 
   title={Remarks on gauge theory, complex geometry and 4-manifold topology},
   journal={The Fields Medel Volume, (M.F. Atiyah and D. Iagolnitzer, eds.), World Scientific},
   volume={},
   date={1997},
   number={},
   pages={},
}


\bib{GKN00}{article}{
   author={Godli\'nski, M.}
   author={Kopczy\'nski, W.}
   author={Nurowski, P.}
   title={Locally Sasakian manifolds},
   journal={},
   volume={},
   date={2000},
   number={},
   pages={},
}

\bib{GZ12}{article}{
   author={Guan, Pengfei}
   author={Zhang, Xi}
   title={Regularity of the geodesic equation in the space of Sasakian metrics},
   journal={Advances in Mathematics},
   volume={230},
   date={2012},
   number={},
   pages={321-371},
}

\bib{G10}{article}{
   author={Guedj, V.},
   title={Propri\'et\'es ergodiques des applications rationnelles.},
   journal={Quelques aspects des syst\`emes dynamiques polynomiaux S. Cantat, A. Chambert-Loir, V.Guedj Panoramas et Synth. },
   volume={30},
   date={2010},
   page={}
}

\bib{GZ}{article}{
   author={Guedj, V.},
   author={Zeriahi, A.},
   title={Degenerate complex Monge-Amp\`ere equations},
   journal={EMS},
   volume={},
   date={2017},
   page={}
}

\bib{H13}{article}{
   author={He Weiyong},
   title={The Sasaki-Ricci Flow and compact Sasaki manifolds of positive transversal holomorphic bisectional curvature},
   journal={J. Geom. Anal.},
   volume={23},
   date={2013},
   page={1876-1931}
}

\bib{HS16}{article}{
   author={He Weiyong},
   author={Sun Song},
   title={Frankel conjecture and Sasaki geometry},
   journal={Advances in Mathematics},
   volume={291},
   date={2016},
   page={912-960}
}

\bib{HL21}{article}{
   author={He Weiyong},
   author={Li Jun},
   title={Geometric pluripotential theory on Sasaki manifolds},
   journal={The Journal of Geometric Analysis},
   volume={31},
   date={2021},
   page={1093-1179}
}

\bib{KR21}{article}{
   author={Kim, D.},
   author={Rashkovskii, A.},
   title={Higher Lelong numbers and convex geometry},
   journal={The Journal of Geometric Analysis.},
   volume={31},
   date={2021},
   page={2525-2539}
}



\bib{Li19}{article}{
   author={Long Li},
   title={The Lelong number, the Monge-Amp\`ere mass and the Schwarz symmetrization of plurisubharmonic functions.},
   journal={Ark. Mat.},
   volume={58},
   date={2020},
   number={},
   pages={369-392},
}

\bib{Li22}{article}{
   author={Long Li},
   title={Approximation of weak geodesics and subharmonicity of Mabuchi energy, II: $\ep$-geodesics.},
   journal={Calc. Var.},
   volume={62},
   date={2023},
   number={73},
   pages={},
}

\bib{Li23}{article}{
   author={Long Li},
   title={On the residual Monge-Amp\`{e}re mass of plurisubharmonic functions with symmetry in $\bC^2$},
   journal={to appear in Math. Z.},
   volume={},
   date={},
   number={},
   pages={},
}

\bib{MSY08}{article}{
   author={Martelli, D.},
   author={Sparks, J.},
   author={Yau, S.-T.}, 
   title={Sasaki-Einstein manifolds and volume minimisation.},
   journal={Commun. Math. Phys.},
   volume={280},
   date={2008},
   number={},
   pages={611-673},
}

\bib{Ra01}{article}{
   author={Rashkovskii, A. }, 
   title={Lelong numbers with respect to regular plurisubharmonic functions},
   journal={Results Math.},
   volume={39},
   date={2001},
   number={},
   pages={320-332},
}

\bib{Ra06}{article}{
   author={Rashkovskii, A. }, 
   title={Relative types and extremal problems for plurisubharmonic functions.},
   journal={Int. Math. Res. Not. 2006 Art. ID 76283, 26 pp},
   volume={},
   date={},
   number={},
   pages={},
}

\bib{Ra13}{article}{
   author={Rashkovskii, A. }, 
   title={Analytic approximations of plurisubharmonic singularities.},
   journal={Math. Z. },
   volume={275},
   date={2013},
   number={3-4},
   pages={1217-1238},
}

\bib{Ra16}{article}{
   author={Rashkovskii, A. }, 
   title={Some problems on plurisubharmonic singularities.},
   journal={Mat. Stud.},
   volume={45},
   date={2016},
   number={},
   pages={104-108},
}

\bib{SH62}{article}{
   author={Sasaki, S.},
   author={Hatakeyama, Y.}, 
   title={On differential manifolds with contact metric structures.},
   journal={ },
   volume={},
   date={1962},
   number={},
   pages={},
}

\bib{Sem74}{article}{
   author={Semmes, S.}, 
   title={Complex Monge-Amp\`ere and symplectic manifolds},
   journal={Amer. J. Math. },
   volume={114},
   date={1992},
   number={},
   pages={495-550},
}



\bib{Wik05}{article}{
   author={Wiklund, J. }, 
   title={Plurcomplex charge at weak singularities. },
   journal={arXiv:math/0510671. },
   volume={},
   date={},
   number={},
   pages={},
}

\end{biblist}
\end{bibdiv}

\end{document}